\newtheorem{theorem}{Theorem}[section]
\newtheorem{lemma}[theorem]{Lemma}
\newtheorem{definition}[theorem]{Definition}
\newtheorem{proposition}[theorem]{Proposition}
\newtheorem{corollary}[theorem]{Corollary}
\newtheorem{remark}[theorem]{Remark}
\newtheorem{example}[theorem]{Example}
\newtheorem{examples}[theorem]{Examples}
\newcommand{\m}{\mathfrak m}
\newcommand{\du}{S \! \Join^b \! I}
\def\opn#1#2{\def#1{\operatorname{#2}}} 
\opn\spec{Spec}
\opn\depth{depth}
\opn\height{ht}
\opn\chara{char}
\opn\gr{gr}
\opn\ord{ord}
\opn\Ap{Ap}
\opn\F{F}
\opn\PF{PF}
\opn\L{L}
\opn\Max{Max}
\title{Almost canonical ideals and GAS numerical semigroups}
\author{Marco D'Anna}
\address{Marco D'Anna - Dipartimento di Matematica e Informatica - Universit\`a degli Studi di Catania - Viale Andrea Doria 6 - 95125 Catania - Italy}\email{mdanna@dmi.unict.it}
\author{Francesco Strazzanti}
\address{Francesco Strazzanti - Dipartimento di Matematica e Informatica - Universit\`a degli Studi di Catania - Viale Andrea Doria 6 - 95125 Catania - Italy}
\email{francesco.strazzanti@gmail.com}
\thanks{The authors were supported by the project ``Propriet\`a algebriche locali e globali di anelli associati a curve e  
ipersuperfici'' PTR 2016-18 - Dipartimento di Matematica e Informatica - Universit\`a degli Studi di Catania \\
The second author was also supported by INdAM, more precisely he was ``titolare di un Assegno di Ricerca dell'Istituto Nazionale di Alta Matematica''.}
\subjclass[2010]{13H10, 20M25}
\date{\today}
\begin{document}

\begin{abstract}
We propose the notion of GAS numerical semigroup which generalizes both almost symmetric and 2-AGL numerical semigroups. 
Moreover, we introduce the concept of almost canonical ideal which generalizes the notion of canonical ideal in the same way almost symmetric numerical semigroups generalize symmetric ones. We prove that a numerical semigroup with maximal ideal $M$ and multiplicity $e$ is GAS if and only if $M-e$ is an almost canonical ideal of $M-M$. This generalizes a result of Barucci about almost symmetric semigroups and a theorem of Chau, Goto, Kumashiro, and Matsuoka about 2-AGL semigroups. We also study the transfer of the GAS property from a numerical semigroup to its gluing, numerical duplication and dilatation.
\end{abstract}

\keywords{Almost symmetric numerical semigroup, almost Gorenstein ring, 2-AGL semigroup, 2-AGL ring, canonical ideal.}

\maketitle

\section*{Introduction}

The notion of Gorenstein ring turned out to have great importance in commutative algebra, algebraic geometry and other mathematics areas and in the last decades many researchers have developed generalizations of this concept obtaining rings with similar properties in certain respects. With this aim, in 1997  Barucci and Fr\"oberg  \cite{BF} introduced the notion of almost Gorenstein ring, inspired by numerical semigroup theory. We recall that a numerical semigroup $S$ is simply an additive submonoid of the set of the natural numbers $\mathbb{N}$ with finite complement in $\mathbb{N}$. The simplest way to relate it to ring theory is by associating with $S$ the ring $k[[S]]=k[[t^s \mid s \in S]]$, where $k$ is a field and $t$ is an indeterminate. Actually it is possible to associate a numerical semigroup $v(R)$ with every one-dimensional analytically irreducible ring $R$. In this case a celebrated result of Kunz \cite{K} ensures that $R$ is Gorenstein if and only if $v(R)$ is a symmetric semigroup, see also \cite[Theorem 4.4.8]{BH} for a proof in the particular case of $k[[S]]$. 
In \cite{BF} the notions of almost symmetric numerical semigroup and almost Gorenstein ring are introduced, where the latter is limited to analytically unramified rings. It turns out that $k[[S]]$ is almost Gorenstein if and only if $S$ is almost symmetric.

More recently this notion has been generalized in the case of one-dimensional local ring \cite{GMP} and in higher dimension \cite{GTT}. Moreover, in \cite{CGKM} it is introduced the notion of $n$-AGL ring in order to stratify the Cohen-Macaulay rings. Indeed a ring is almost Gorenstein if and only if it is either $1$-AGL or $0$-AGL, with $0$-AGL equivalent to be Gorenstein. In this respect $2$-AGL rings are near to be almost Gorenstein and for this reason their properties have been deepened in \cite{CGKM,GIT}. In \cite{CGKM} it is also studied the numerical semigroup case, where $2$-AGL numerical semigroups are close to be almost symmetric.

In this paper we introduce the class of {\em Generalized Almost Symmetric numerical semigroups}, briefly GAS numerical semigroups, that includes symmetric, almost symmetric and 2-AGL numerical semigroups, but not 3-AGL. Moreover, if $S$ has maximal embedding dimension and it is GAS, then it is either almost symmetric or 2-AGL.
Our original motivation to introduce this class is a result on 2-AGL numerical semigroups that partially generalize a property of almost symmetric semigroups. More precisely, let $S$ be a numerical semigroup with multiplicity $e$ and let $M$ be its maximal ideal. In \cite[Corollary 8]{BF} it is proved that $M-M$ is symmetric if and only if $S$ is almost symmetric with maximal embedding dimension. If we do not assume that $S$ has maximal embedding dimension, it holds that $S$ is almost symmetric if and only if $M-e$ is a canonical ideal of $M-M$ (indeed $S$ has maximal embedding dimension exactly when $M-e=M-M$, see \cite[Theorem 5.2]{B}). In \cite[Corollary 5.4]{CGKM} it is shown that $S$ is 2-AGL if and only if $M-M$ is almost symmetric and not symmetric, provided that $S$ has maximal embedding dimension. 

Hence, it is natural to investigate what happens to $M-M$, for a 2-AGL semigroup, if we do not make any assumptions on its embedding dimension. It turns out that $M-e$ is an ideal of $M-M$ that satisfies some equivalent conditions, that are the analogue for ideals to the  defining conditions of almost symmetric semigroup (cf. Definition 2.1 and Proposition \ref{almost canonical ideal});
for this reason we called the ideals in this class \emph{almost canonical ideals}. However the converse is not true:
there exist numerical semigroups $S$ such that $M-e$ is an almost canonical ideal of $M-M$, but that are not 2-AGL.
This fact lead us to look for those numerical semigroup satisfying this property, and we found that these 
semigroups naturally generalize 2-AGL semigroups (this is evident if we look at $2K\setminus K$, where $K$ is the canonical ideal of $S$, cf. Proposition 3.1 and Definition 3.2); moreover, as we said above this class coincides with the union of 2-AGL and almost symmetric semigroups, if we assume maximal embedding dimension; hence we called them Generalized Almost Symmetric (briefly GAS). It turns out that GAS semigroups are interesting under many aspects; for example, if $S$ is GAS, it is possible to control both the semigroup generated by its canonical ideal (that plays a fundamental role in \cite{CGKM}; cf. Theorem \ref{Livelli più alti}) and its pseudo-Frobenius numbers (cf. Proposition \ref{PF GAS}). 

Hence, in this paper, after recalling the basic definitions and notations,  we introduce, in Section 2, the concept of almost canonical ideal. 
We show under which respect they are a generalization of canonical ideals and we notice that, similarly to the canonical case, a numerical semigroup $S$ is almost symmetric if and only if it is an almost canonical ideal of itself. Moreover, we prove several equivalent conditions for a semigroup ideal to be almost canonical (cf. Proposition \ref{almost canonical ideal}) and we show how to find all the almost canonical ideals of a numerical semigroup and to count them (Corollary \ref{Number of almost canonical ideals}).

In Section 3 we develop the theory of GAS semigroups proving many equivalent conditions (see Proposition \ref{Characterizations GAS}), exploring their properties (cf. Theorem \ref{Livelli più alti} and Proposition \ref{PF GAS}) and relating them with other classes of numerical semigroups that have been recently introduced to generalize almost symmetric semigroups. The main result is Theorem \ref{T. Almost Canonical ideal of M-M}, where it is proved that $S$ is GAS if and only if $M-e$ is an almost canonical ideal of $M-M$. 

Finally in Section 4 we study the transfer of the GAS property from $S$ to some numerical semigroup constructions: gluing in Theorem \ref{gluing}, numerical duplication in Theorem \ref{Numerical duplication S-<K>} and dilatation in Proposition \ref{dilatation}.

Several computations are performed by using the GAP system \cite{GAP} and, in particular, the NumericalSgps package \cite{DGM}.

\section{Notation and basic definitions}

A numerical semigroup $S$ is a submonoid of the natural numbers $\mathbb{N}$ such that $|\mathbb{N} \setminus S| < \infty$. Therefore, there exists the maximum of $\mathbb{N} \setminus S$ that is said to be the Frobenius number of $S$ and it is denoted by $\F(S)$. Given $s_1, \dots, s_{\nu} \in \mathbb{N}$ we set $\langle s_1, \dots, s_{\nu} \rangle=\{\lambda_1 s_1 + \dots + \lambda_{\nu} s_{\nu} \mid \lambda_1, \dots, \lambda_{\nu} \in \mathbb{N} \}$ which is a numerical semigroup if and only if $\gcd(s_1, \dots, s_{\nu})=1$. We say that $s_1, \dots, s_{\nu}$ are minimal generators of $\langle s_1, \dots, s_{\nu} \rangle$ if it is not possible to delete one of them obtaining the same semigroup. It is well-known that a numerical semigroup have a unique system of minimal generators, which is finite, and its cardinality is called embedding dimension of $S$. The minimum non-zero element of $S$ is said to be the multiplicity of $S$ and we denote it by $e$. It is always greater than or equal to the embedding dimension of $S$ and we say that $S$ has maximal embedding dimension if they are equal. 
Unless otherwise specified, we assume that $S \neq \mathbb{N}$. 

A set $I \subseteq \mathbb{Z}$ is said to be a relative ideal of $S$ if $I+S\subseteq I$ and there exists $z \in S$ such that $z+I \subseteq S$. If it is possible to chose $z=0$, i.e. $I \subseteq S$, we simply say that $I$ is an ideal of $S$. Two very important relative ideals are $M(S)=S\setminus \{0\}$, which is an ideal and it is called the maximal ideal of $S$, and $K(S)=\{x \in \mathbb{N} \mid \F(S)-x \notin S\}$. We refer to the latter as the standard canonical ideal of $S$ and we say that a relative ideal $I$ of $S$ is canonical if $I=x+K(S)$ for some $x \in \mathbb{Z}$. If the semigroup is clear from the context, we write $M$ and $K$ in place of $M(S)$ and $K(S)$.
Given two relative ideals $I$ and $J$ of $S$, we set $I-J = \{x \in \mathbb{Z} \mid x+J \subseteq I\}$ which is a relative ideal of $S$. For every relative ideal $I$ it holds that $K-(K-I)=I$, in particular $K-(K-S)=S$. Moreover, an element $x$ is in $I$ if and only if $\F(S)-x \notin K-I$, see \cite[Hilfssatz 5]{J}. As a consequence we get that the cardinalities of $I$ and $K-I$ are equal.  
Also, if $I \subseteq J$ are two relative ideals, then $|J \setminus I|=|(K-I)\setminus (K-J)|$.
We now collect some important definitions that we are going to generalize in the next section.

\begin{definition} \label{Basic definitions} \rm Let $S$ be a numerical semigroup.
\begin{enumerate}
\item The {\it pseudo-Frobenius numbers} of $S$ are the elements of the set $\PF(S)=(S-M)\setminus S$.
\item The {\it type} of $S$ is $t(S)=|\PF(S)|$.
\item $S$ is {\it symmetric} if and only if $S=K$.
\item $S$ is {\it almost symmetric} if and only if $S-M=K \cup \{\F(S)\}$.
\end{enumerate}
\end{definition}

We note that $M-M=S \cup \PF(S)$. Given $0 \leq i \leq e-1$, let $\omega_i$ be the smallest element of $S$ that is congruent to $i$ modulo $e$. 
A fundamental tool in numerical semigroup theory is the so-called Ap\'ery set of $S$ that is defined as $\Ap(S)=\{\omega_0=0, \omega_1, \dots, \omega_{e-1}\}$.  
In $\Ap(S)$ we define the partial ordering $x \leq_S y$ if and only if $y= x+s$ for some $s \in S$ and we denote the maximal elements of $\Ap(S)$ with respect to $\leq_S$ by ${\rm Max}_{\leq_S}(\Ap(S))$. With this notation $\PF(S)=\{\omega -e \mid \omega \in {\rm Max}_{\leq S}(\Ap(S)) \}$, see \cite[Proposition 2.20]{RG}. 
We also recall that $S$ is symmetric if and only if $t(S)=1$, that is also equivalent to say that $k[[S]]$ has type $1$ for every field $k$, i.e. $k[[S]]$ is Gorenstein. Also for almost symmetric semigroups many useful characterizations are known, for instance it is easy to see that our definition is equivalent to $M+K \subseteq M$, but see also \cite[Theorem 2.4]{N}
for another useful characterization related to the Ap\'ery set of $S$ and its pseudo-Frobenius numbers.

\section{Almost canonical ideals of a numerical semigroup}

If $I$ is a relative ideal of $S$, the set $\mathbb{Z}\setminus I$ has a maximum that we denote by $\F(I)$. We set $\widetilde{I}=I+(\F(S)-\F(I))$, that is the unique relative ideal $J$ isomorphic to $I$ for which $\F(S)=\F(J)$, and we note that $\widetilde{I} \subseteq K \subseteq \mathbb{N}$ for every $I$. 
The following is a generalization of Definition \ref{Basic definitions}.

\begin{definition} \rm 
Let $I$ be a relative ideal of a numerical semigroup $S$.
\begin{enumerate}
\item The {\it pseudo-Frobenius numbers} of $I$ are the elements of the set $\PF(I)=(I-M)\setminus I$.
\item The {\it type} of $I$ is $t(I)=|\PF(I)|$.
\item $I$ is {\it canonical} if and only if $\widetilde{I}=K$.
\item $I$ is {\it almost canonical} if and only if $\widetilde{I}-M=K \cup \{\F(S)\}$.
\end{enumerate}
\end{definition}

\begin{remark} \rm \label{Rem as}
{\bf 1.} $S$ is an almost canonical ideal of itself if and only if it is an almost symmetric semigroup. \\
{\bf 2.} $M$ is an almost canonical ideal of $S$ if and only if $S$ is an almost symmetric semigroup. Indeed, $M-M=S-M$, since $S \neq \mathbb{N}$. Moreover, $t(M)=t(S)+1$. \\ 
{\bf 3.} It holds that $K-M=K \cup \{\F(S)\}$. One containment is trivial, so let $x \in ((K-M) \setminus (K \cup \{\F(S)\}))$. Then $0 \neq \F(S)-x \in S$ and, thus, $\F(S)=(\F(S)-x)+x \in M+ (K-M) \subseteq K$ yields a contradiction. In particular, a canonical ideal is almost canonical. \\
{\bf 4.} Since $\F(S)=\F(\widetilde{I})$, it is always in $\widetilde{I}-M$. Moreover, we claim that $(\widetilde{I}-M) \subseteq K \cup \{\F(S)\}$. Indeed, if $x \in (\widetilde{I}-M)\setminus\{\F(S)\}$ and $x \notin K$, then $\F(S)-x \in M$ and $\F(\widetilde{I})=\F(S)=(\F(S)-x)+x \in \widetilde{I}$. 
In addition, $\widetilde{I}$ is always contained in $\widetilde{I}-M$ because it is a relative ideal of $S$. Hence, $I$ is an almost canonical ideal of $S$ if and only if 
$K \setminus \widetilde{I} \subseteq (\widetilde{I}-M)$.
\end{remark}

Given a relative ideal $I$ of $S$, the Ap\'ery set of $I$ is $\Ap(I)=\{i \in I \mid i-e \notin I\}$. As in the semigroup case, in $\Ap(I)$ we define the partial ordering $x \leq_S y$ if and only if $y= x+s$ for some $s \in S$ and we denote by ${\rm Max}_{\leq_S}(\Ap(I))$ the maximal elements of $\Ap(I)$ with respect to $\leq_S$.

\begin{proposition} Let $I$ be a relative ideal of $S$. The following statements hold:  
\begin{enumerate}
\item $\PF(I)= \{i-e \mid i \in {\rm Max}_{\leq_S}(\Ap(I)) \}$;
\item $I$ is canonical if and only if its type is $1$.
\end{enumerate}
\end{proposition}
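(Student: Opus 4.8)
The plan is to establish part (1) first, then deduce part (2) as a corollary, mimicking the structure of \cite[Proposition 2.20]{RG} in the semigroup setting. For part (1), I would start by unwinding the definition $\PF(I)=(I-M)\setminus I$. Fix $x \in \mathbb{Z}$. The condition $x \in I-M$ means $x+m \in I$ for all $m \in M=S\setminus\{0\}$, equivalently $x+s \in I$ for every nonzero $s \in S$. Combined with $x \notin I$, this says precisely that $x$ is ``just below'' $I$: adding any positive semigroup element lands in $I$, but $x$ itself does not. The key observation is to relate this to the Apéry set via the element $e$: I would show that $x \in (I-M)\setminus I$ if and only if $x+e \in \Ap(I)$ (i.e. $x+e \in I$ and $x+e-e = x \notin I$) \emph{and} $x+e$ is maximal in $\Ap(I)$ with respect to $\leq_S$. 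The forward direction is immediate: if $x+e\in I$ but $x\notin I$ then $x+e\in\Ap(I)$, and if $x+e \leq_S y$ with $y=x+e+s$, $s\in S\setminus\{0\}$, then $y-e = x+s \in I$ since $x\in I-M$, contradicting $y\in\Ap(I)$ unless $s=0$; hence $x+e$ is maximal. For the reverse direction, suppose $i=x+e\in\Ap(I)$ is $\leq_S$-maximal; then $x = i-e\notin I$ by definition of $\Ap(I)$, and for any nonzero $s\in S$ I must check $x+s=i-e+s\in I$. If not, then writing things appropriately one produces an element of $\Ap(I)$ strictly above $i$ in the $\leq_S$ order, contradicting maximality --- here one uses the standard fact that every element of $I$ lies above some element of $\Ap(I)$, so from $i-e+s\notin I$ one can keep subtracting $e$ until landing in $\Ap(I)$, and that element dominates $i$. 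This establishes the bijection $x \mapsto x+e$ between $\PF(I)$ and ${\rm Max}_{\leq_S}(\Ap(I))$, which is exactly statement (1).

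For part (2), I would argue as follows. By definition $I$ is canonical iff $\widetilde{I}=K$, and since $\widetilde{I}$ is isomorphic to $I$ as a relative ideal (translation by $\F(S)-\F(I)$), all the invariants $t(I)=t(\widetilde{I})$, $\PF(I)$ translated, etc., agree; so it suffices to prove that for a relative ideal $J$ with $\F(J)=\F(S)$ one has $J=K$ if and only if $t(J)=1$. The implication that $K$ has type $1$ follows from Remark~\ref{Rem as}.3: $K-M = K\cup\{\F(S)\}$, hence $\PF(K)=(K-M)\setminus K = \{\F(S)\}$, so $t(K)=1$. Conversely, if $t(J)=1$ then $\PF(J)=\{\F(S)\}$ (since $\F(S)=\F(J)\in (J-M)\setminus J$ always, as in Remark~\ref{Rem as}.4, so the unique pseudo-Frobenius number must be $\F(S)$), which means $J-M = J\cup\{\F(S)\}$. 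From here I would invoke the duality $K-(K-I)=I$ and the cardinality/counting facts recalled in Section~1: the condition $J-M=J\cup\{\F(S)\}$ forces $J$ to be ``as large as possible'' subject to $\F(J)=\F(S)$, and comparing with $\widetilde{J}\subseteq K$ together with an element-count (using $|J|=|K-J|$ and that $\F(J)-x\notin J \iff x\in K-J$) pins down $J=K$. Concretely: $\widetilde{J}\subseteq K$ always, and if the inclusion were proper there would be some $\F(S)-y \in K\setminus J$ with $y\notin K-S=S$... actually the cleanest route is to note that the type-$1$ condition is exactly self-duality forcing $J$ to be a canonical module, which in the numerical-semigroup dictionary is the statement $\widetilde J=K$.

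The main obstacle I anticipate is the reverse direction of part (2): showing that type $1$ \emph{forces} canonicity, rather than merely being implied by it. The forward computations (part (1), and ``canonical $\Rightarrow$ type $1$'') are routine manipulations with Apéry sets and the identity $K-M=K\cup\{\F(S)\}$. The converse is the substantive point, and it really uses the full strength of the Jäger duality $K-(K-I)=I$ together with the cardinality identities $|I|=|K-I|$ and $|J\setminus I|=|(K-I)\setminus(K-J)|$ recalled at the end of Section~1; the argument is that $t(J)=1$ says $K-J$ is generated in a minimal way over $M$, and dualizing recovers $J$ as $K$ up to the translation normalizing the Frobenius number. I would present this carefully since it is the classical characterization of the canonical ideal by its type, lifted verbatim from the semigroup case to arbitrary relative ideals.
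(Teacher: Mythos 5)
Parts (1) and the ``canonical $\Rightarrow$ type $1$'' half of (2) are essentially correct and follow the same route as the paper (your detour in the reverse direction of (1), ``keep subtracting $e$ until landing in $\Ap(I)$'', is unnecessary: if $i\in\Ap(I)$ and $i-e+s\notin I$ for some $s\in M$, then $i+s$ itself lies in $\Ap(I)$ and dominates $i$, which is exactly the paper's one-line equivalence). The genuine gap is the converse of (2). You never give an argument that type $1$ forces $\widetilde I=K$: the appeal to J\"ager duality is left as ``dualizing recovers $J$ as $K$'', and your ``cleanest route'' --- that the type-$1$ condition \emph{is} self-duality, hence $J$ is a canonical module --- is circular, since characterizing canonical ideals by type $1$ is precisely the statement to be proved. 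The counting identities you cite do not obviously close this either: from $t(J)=1$ you only get $J-M=J\cup\{\F(S)\}$, whereas the equality-case bookkeeping you would want requires knowing $J-M=K\cup\{\F(S)\}$, i.e.\ $K\subseteq J$, which is the conclusion.

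The missing idea is to use part (1) itself, which is how the paper argues: since $t(\widetilde I)=1$ and $\F(S)\in(\widetilde I-M)\setminus\widetilde I$, the set $\Ap(\widetilde I)$ has a unique $\leq_S$-maximal element, namely $\F(S)+e$. Given $x\notin\widetilde I$, choose the least $\lambda\geq 1$ with $x+\lambda e\in\widetilde I$; then $x+\lambda e\in\Ap(\widetilde I)$, so $x+\lambda e\leq_S\F(S)+e$, i.e.\ $\F(S)+e=x+\lambda e+y$ with $y\in S$, whence $x=\F(S)-(y+(\lambda-1)e)$ with $y+(\lambda-1)e\in S$, so $x\notin K$. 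Combined with $\widetilde I\subseteq K$ this gives $\widetilde I=K$. Without some argument of this kind (or a genuinely worked-out duality computation), the substantive half of (2) --- which you yourself flag as the main obstacle --- remains unproved.
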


\begin{proof}
(1) An integer $i \in I$ is in ${\rm Max}_{\leq_S}(\Ap(I))$ if and only if $i-e \notin I$ and $s+i \notin \Ap(I)$, i.e. $s+i-e \in I$, for every $s \in M$. This is equivalent to say that $i-e \in (I-M)\setminus I=\PF(I)$. \\
(2) Since $\F(S)\in \widetilde I-M$, we have $t(\widetilde I)=t(I)=1$ if and only if $\widetilde{I}-M=\widetilde I \cup \{\F(S)\}$. Therefore, a canonical ideal has type 1 by Remark \ref{Rem as}.3. Conversely, assume that $t(\widetilde{I})=1$ and let $x \notin \widetilde{I}$. Since $\widetilde{I} \subseteq K$, we only need to prove that $x \notin K$. By (1), there is a unique maximal element in $\Ap(\widetilde{I})$ with respect to $\leq_S$ and, clearly, it is $\F(S)+e$. Let $0 \neq \lambda \in \mathbb{N}$ be such that $x+ \lambda e \in \Ap(\widetilde{I})$. Then, there exists $y \in S$ such that $x+\lambda e + y = \F(S)+e$ and $x=\F(S)-(y+(\lambda-1)e) \notin K$, since $y+(\lambda-1)e \in S$.
\end{proof}

Let $g(S)=|\mathbb{N}\setminus S|$ denote the genus of $S$ and let $g(I)=|\mathbb{N}\setminus \widetilde{I}|$. We recall that $2g(S) \geq \F(S) + t(S)$ and the equality holds if and only if $S$ is almost symmetric, see, e.g., \cite[Proposition 2.2 and Proposition-Definition 2.3]{N}.

\begin{proposition} \label{almost canonical ideal}
Let $I$ be a relative ideal of $S$. Then
$g(I)+g(S) \geq \F(S) + t(I)$.
Moreover, the following conditions are equivalent:
\begin{enumerate}
\item $I$ is almost canonical; 
\item $g(I)+g(S)=\F(S)+t(I)$;
\item $\widetilde{I}-M=K-M$;
\item $K-(M-M) \subseteq \widetilde{I}$;
\item If $x \in \PF(I)\setminus \{\F(I)\}$, then $\F(I)-x \in \PF(S)$.
\end{enumerate}
\end{proposition}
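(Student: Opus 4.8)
The plan is to establish the inequality first, then prove a cycle of implications threading through the five equivalent conditions. For the inequality $g(I)+g(S)\ge \F(S)+t(I)$, I would work with $\widetilde I$ in place of $I$, since $g(I)=g(\widetilde I)$ and $t(I)=t(\widetilde I)$ by the isomorphism $I\cong\widetilde I$. The element $x$ belongs to $I$ precisely when $\F(S)-x\notin K-I$, so replacing $I$ by $\widetilde I$ (which has the same Frobenius number as $S$) gives a bijection between $\mathbb N\setminus\widetilde I$ and a subset of $K\setminus(K-\widetilde I)$, or rather a precise count $|K\setminus\widetilde I|=|(K-\widetilde I)\setminus(K-K)|=|(K-\widetilde I)\setminus S|$. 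I would instead prefer to count directly: the key identity is $g(\widetilde I)=|\mathbb N\setminus\widetilde I|$ and one decomposes $\mathbb N\setminus\widetilde I$ using the fact that $\widetilde I\subseteq K$; write $\mathbb N\setminus\widetilde I=(\mathbb N\setminus K)\sqcup(K\setminus\widetilde I)$, so $g(\widetilde I)=g(K)+|K\setminus\widetilde I|$. Since $K$ is the canonical ideal, $g(K)=\F(S)+1-g(S)$ (equivalently $|\mathbb N\setminus K|=|S\setminus\{0\}\cap[0,\F(S)]|$, a standard symmetry of the canonical ideal). Substituting, the desired inequality becomes $|K\setminus\widetilde I|\ge t(I)-1$, i.e. $|K\setminus\widetilde I|\ge |\PF(I)\setminus\{\F(I)\}|$ — and here I would use Remark 2.4.4, which says $K\setminus\widetilde I\subseteq\widetilde I-M$; combined with part (1) of the preceding Proposition (that $\PF(\widetilde I)=\{i-e:i\in\Max_{\le_S}\Ap(\widetilde I)\}$), one gets an injection from $\PF(I)\setminus\{\F(I)\}$ into $K\setminus\widetilde I$ by sending a pseudo-Frobenius number to itself (noting $\F(I)=\F(S)$ for $\widetilde I$ is excluded and the remaining pseudo-Frobenius numbers lie in $K$ by Remark 2.4.4). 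The details of why this map lands in $K\setminus\widetilde I$ and is injective are the routine part.

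For the equivalences: (1) $\Leftrightarrow$ (3) is essentially immediate from Remark 2.4.3, which states $K-M=K\cup\{\F(S)\}$, so $\widetilde I-M=K\cup\{\F(S)\}$ is literally the same as $\widetilde I-M=K-M$. For (1) $\Leftrightarrow$ (2): equality in the inequality forces equality at each step of the above counting argument, in particular the injection $\PF(I)\setminus\{\F(I)\}\hookrightarrow K\setminus\widetilde I$ must be a bijection; unwinding, this is equivalent to $K\setminus\widetilde I\subseteq\widetilde I-M$ (by Remark 2.4.4 the reverse-type containment toward $K\cup\{\F(S)\}$ is automatic), which by Remark 2.4.4's last sentence is exactly the almost-canonical condition. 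So (2) collapses into (1) via the slack analysis of the inequality. For (3) $\Leftrightarrow$ (4): apply the duality $K-(K-J)=J$. We have $\widetilde I-M = K-(K-(\widetilde I-M))$ and one computes $K-(\widetilde I - M)$; the point is that $\widetilde I - M = (\widetilde I - S) - M$ manipulations let us rewrite, using $K - M = K - M$, the inclusion $K-(M-M)\subseteq\widetilde I$ as $K-M\subseteq \widetilde I - (K-(M-M))$... more cleanly: since $M-M=S\cup\PF(S)$, and $K-(M-M)\subseteq K-S=K$, condition (4) says a specific ideal sits inside $\widetilde I$; dualizing via $I\subseteq J\iff K-J\subseteq K-I$ turns (4) into $K-\widetilde I\subseteq K-(K-(M-M))=M-M$, i.e. $K-\widetilde I\subseteq M-M=S\cup\PF(S)$. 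On the other hand (3) rewritten via the same duality: $\widetilde I-M=K-M$ iff $M-(K-M)... $ — here I would instead use the known relation $J-M=K-(M-(K-J))$ or simply observe both (3) and (4) are equivalent to $K\setminus\widetilde I\subseteq\widetilde I-M$ by direct set-chasing with Remark 2.4.3 and 2.4.4, which is the safest route.

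Finally (1)/(3) $\Leftrightarrow$ (5): condition (5) says that for $x\in\PF(I)\setminus\{\F(I)\}$ we have $\F(I)-x\in\PF(S)$. Translating to $\widetilde I$ (so $\F(I)$ becomes $\F(S)$): $x\in\PF(\widetilde I)\setminus\{\F(S)\}$ implies $\F(S)-x\in\PF(S)$. Now $x\in\PF(\widetilde I)$ means $x\in(\widetilde I-M)\setminus\widetilde I$; by Remark 2.4.4, $x\ne\F(S)$ forces $x\in K$, and then $\F(S)-x\notin S$. The condition $\F(S)-x\in\PF(S)=(S-M)\setminus S$ needs $\F(S)-x\in S-M$, i.e. $\F(S)-x+M\subseteq S$, i.e. $\F(S)-x\in M-M$ minus... — and one checks that "$x\in\widetilde I-M$ with $x\in K\setminus\widetilde I$" is equivalent, over all such $x$, to "$\F(S)-x\in M-M$", so that (5) holding for all relevant $x$ is equivalent to $K\setminus\widetilde I\subseteq\widetilde I-M$ again, closing the loop.

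\textbf{Main obstacle.} The delicate point is the inequality and its equality case: pinning down exactly the injection $\PF(I)\setminus\{\F(I)\}\hookrightarrow K\setminus\widetilde I$, verifying it is well-defined (landing in $K$ and outside $\widetilde I$) using Remark 2.4.4, and showing that equality in $g(I)+g(S)\ge\F(S)+t(I)$ is equivalent to surjectivity of this map — which then must be identified with the almost-canonical condition $K\setminus\widetilde I\subseteq\widetilde I-M$. Once that bookkeeping is in place, the remaining equivalences (3), (4), (5) are all reformulations of the single inclusion $K\setminus\widetilde I\subseteq\widetilde I-M$ via the duality $K-(K-J)=J$ and Remarks 2.4.3–2.4.4, and are comparatively mechanical.
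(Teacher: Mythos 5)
Your treatment of the inequality and of the equivalences (1)$\Leftrightarrow$(2)$\Leftrightarrow$(3) is correct and is essentially the paper's own counting argument in complementary form: the paper compares $|\mathbb{N}\setminus(\widetilde{I}-M)|$ with $|\mathbb{N}\setminus(K\cup\{\F(S)\})|$, you compare $|\PF(\widetilde{I})\setminus\{\F(S)\}|$ with $|K\setminus\widetilde{I}|$; both hinge on Remark 2.4.4 and give the equality case at once. The step (3)$\Leftrightarrow$(4) is thinner than you suggest (it is not pure set-chasing: both directions need the duality $x\in J\Leftrightarrow \F(S)-x\notin K-J$ applied to $J=M-M$ and $J=M$), but you do reach the correct dual form $K-\widetilde{I}\subseteq M-M$, so that part is recoverable.

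The genuine gap is in condition (5). Its quantifier runs over $\PF(I)\setminus\{\F(I)\}$, i.e.\ over those elements of $K\setminus\widetilde{I}$ that \emph{already} lie in $\widetilde{I}-M$; your reduction silently replaces this by ``for all $x\in K\setminus\widetilde{I}$'', which presupposes exactly the inclusion $K\setminus\widetilde{I}\subseteq\widetilde{I}-M$ you are trying to prove, so the direction (5)$\Rightarrow$(1)/(4) is not established. Moreover the pointwise equivalence you invoke, ``$x\in\widetilde{I}-M$ with $x\in K\setminus\widetilde{I}$'' iff ``$\F(S)-x\in M-M$'', is false as stated: for $S=\langle 5,7,9\rangle$ and $I=S$ one has $2\in K\setminus S$ with $\F(S)-2=11\in\PF(S)\subseteq M-M$, yet $2\notin S-M$. (If it were a valid pointwise equivalence in the other direction, every ideal would satisfy (5) and hence be almost canonical, contradicting Corollary 2.7.) What is missing is the paper's bridging device: given $x\in(K-(M-M))\setminus\widetilde{I}$, choose $s\in S$ maximal with respect to $\leq_S$ among those with $x+s\notin\widetilde{I}$, so that $x+s\in\PF(\widetilde{I})$; then (5) gives $\F(S)-x-s\in\PF(S)\cup\{0\}\subseteq M-M$, whence $\F(S)-s=x+(\F(S)-x-s)\in(K-(M-M))+(M-M)\subseteq K$, contradicting $s\in S$. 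Similarly, (1)$\Rightarrow$(5) needs the auxiliary-element argument (if $\F(S)-y+m\notin S$ then $y-m\in K\subseteq\widetilde{I}-M$, so $y\in\widetilde{I}$), not a formal rewriting. Without these two arguments the loop through (5) does not close.
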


\begin{proof}
Clearly, $t(I)=t(\widetilde{I})$ and $g(I)-t(\widetilde{I})=|\mathbb{N} \setminus \widetilde{I}|-|(\widetilde{I}-M)\setminus \widetilde{I}|=|\mathbb{N}\setminus (\widetilde{I}-M)|$. Moreover, since $\F(S)+1-g(S)$ is the number of the elements of $S$ smaller than $\F(S)+1$, it holds that $\F(S)-g(S)=|\mathbb{N}\setminus K|-1=|\mathbb{N} \setminus (K \cup {\F(S)})|$. We have $\widetilde{I}-M \subseteq K \cup \{\F(S)\}$ by Remark \ref{Rem as}.4, then $g(I)-t(I) \geq \F(S) -g(S)$ and the equality holds if and only if $\widetilde{I}-M = K \cup \{\F(S)\}$, i.e. $I$ is almost canonical. Hence, (1) $\Leftrightarrow$ (2). \\
(1) $\Leftrightarrow$ (3). We have already proved that $K-M=K \cup \{\F(S)\}$ in Remark \ref{Rem as}.3. \\
(1) $\Rightarrow$ (4). The thesis is equivalent to $M-M \supseteq K-\widetilde{I}$. Let $x \in K-\widetilde{I}$ and assume by contradiction that there exists $m \in M$ such that $x+m \notin M$. Then, $\F(S)-x-m \in K \cup \{\F(S)\}=\widetilde{I}-M$ and, so, $\F(S)-x \in \widetilde{I}$. Since $x \in K-\widetilde{I}$, this implies $\F(S) \in K$, that is a contradiction. \\
(4) $\Rightarrow$ (1). Let $x \in K$. It is enough to prove that $x \in \widetilde{I}-M$. Suppose by contradiction that there exists $m \in M$ such that $x+m \notin \widetilde{I}\supseteq K-(M-M)$. In particular, $x+m\notin K-(M-M)$ and so $\F(S)-(x+m) \in M-M$. This implies $\F(S)-x \in M$, that is a contradiction because $x \in K$. \\
(1) $\Rightarrow$ (5) We notice that $\PF(\widetilde{I})=\{x+\F(S)-\F(I) \mid x \in \PF(I)\}$. Let $x \in \PF(I)\setminus \{\F(I)\}$ and let $y=x+\F(S)-\F(I) \in \PF(\widetilde I) \setminus \{ \F(S)\}$. We first note that $\F(S)-y \notin S$, otherwise $\F(S)=y+(\F(S)-y) \in \widetilde I$. Assume by contradiction that $\F(S)-y \notin \PF(S)$, i.e. there exists $m \in M$ such that $\F(S)-y+m \notin S$. This implies that $y-m \in K \subseteq \widetilde{I}-M$ by (1) and, thus, $y=(y-m)+m \in \widetilde I$ yields a contradiction. Hence, $\F(I)-x=\F(S)-y \in \PF(S)$.  \\
(5) $\Rightarrow$ (4) Assume by contradiction that there exists $x \in (K-(M-M))\setminus \widetilde I$. It easily follows from the definition that there is $s \in S$ such that $x+s \in \PF(\widetilde{I})$. Then, $\F(S)-x-s \in \PF(S) \cup \{0\} \subseteq M-M$ by (5) and $\F(S)-s=x +(\F(S)-x-s) \in (K-(M-M)) + (M-M) \subseteq K$ gives a contradiction.
\end{proof}

\begin{remark} \rm
{\bf 1.} In \cite[Theorem 2.4]{N} it is proved that a numerical semigroup $S$ is almost symmetric if and only if $\F(S)-f \in \PF(S)$ for every $f \in \PF(S) \setminus \{\F(S)\}$. Hence, the last condition of Proposition \ref{almost canonical ideal} can be considered a generalization of this result. \\
{\bf 2.} Almost canonical ideals naturally arise characterizing the almost symmetry of the numerical duplication $\du$ of $S$ with respect to the ideal $I$ and $b \in S$, a construction introduced in \cite{DS}. Indeed \cite[Theorem 4.3]{DS} says that $\du$ is almost symmetric if and only if $I$ is almost canonical and $K-\widetilde{I}$ is a numerical semigroup. \\
{\bf 3.} Let $T$ be an almost symmetric numerical semigroup with odd Frobenius number (or, equivalently, odd type). Let $b$ be an odd integer such that $2b \in T$ and set $I=\{x \in \mathbb{Z} \mid 2x+b \in T\}$. Then, \cite[Proposition 3.3]{S} says that $T$ can be realized as a numerical duplication $T=\du$, where $S=T/2=\{y \in \mathbb{Z} \mid 2y \in T\}$, while \cite[Theorem 3.7]{S} implies that $I$ is an almost canonical ideal of $S$. In general this is not true if the Frobenius number of $T$ is even. 
\end{remark}

Since $\F(K-(M-M))= \F(\widetilde{I})=\F(K)$ and $\widetilde{I} \subseteq K$ for every relative ideal $I$, Condition (4) of Proposition \ref{almost canonical ideal} allows to find all the almost canonical ideals of a numerical semigroup. Clearly it is enough to focus on the relative ideals with Frobenius number $\F(S)$.


\begin{corollary} \label{Number of almost canonical ideals}
Let $S$ be a numerical semigroup with type $t$.
If $I$ is almost canonical, then $t(I)\leq t+1$. Moreover, for every integer $i$ such that $1 \leq i \leq t+1$, there are exactly $\binom{t}{i-1}$ almost canonical ideals of $S$ with Frobenius number $\F(S)$ and type $i$. In particular, there are exactly $2^{t}$ almost canonical ideals of $S$ with Frobenius number $\F(S)$.
\end{corollary}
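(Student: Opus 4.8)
The plan is to use characterization (4) of Proposition \ref{almost canonical ideal}: a relative ideal $I$ with $\F(I)=\F(S)$ (so that $\widetilde I = I$) is almost canonical if and only if $K-(M-M) \subseteq I \subseteq K$. Since an almost canonical ideal is determined up to translation by its associated ideal with Frobenius number $\F(S)$, and two distinct ideals sandwiched between $K-(M-M)$ and $K$ are never translates of one another (they have the same Frobenius number but are distinct), counting almost canonical ideals with $\F(I)=\F(S)$ amounts to counting the relative ideals $I$ of $S$ with $K-(M-M) \subseteq I \subseteq K$. First I would compute the ``room'' available, namely the set $K \setminus (K-(M-M))$: I claim this set has exactly $t$ elements. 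Indeed $|K \setminus (K-(M-M))| = |(M-M)\setminus (K-K)|$ by the reciprocity recalled in Section 1 (for $J' \subseteq J$ one has $|J\setminus J'| = |(K-J')\setminus (K-J)|$, applied with $J = M-M$, $J' = K-K = S$); and $(M-M)\setminus S = \PF(S)$ by the remark $M-M = S \cup \PF(S)$ made just after Definition \ref{Basic definitions}, so this set has cardinality $t$.

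Next I would identify precisely which subsets $A$ of the $t$-element set $G := K \setminus (K-(M-M))$ give rise to a relative ideal $I_A := (K-(M-M)) \cup A$. The containment $I_A + S \subseteq I_A$ must be checked: since both $K-(M-M)$ and $K$ are relative ideals of $S$ and $K-(M-M) \subseteq I_A \subseteq K$, for $x \in A$ and $s \in S \setminus \{0\}$ we have $x + s \in K$, and we need $x+s \in I_A$, i.e. $x+s \in (K-(M-M)) \cup A$. So the admissible subsets $A$ are exactly the ones that are ``upward closed'' inside $G$ with respect to the order $\leq_S$ restricted to $G$ — equivalently, $A$ must be a union of elements together with all their $\leq_S$-successors that still lie in $G$. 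At this point the key structural fact I expect to need is that the partial order $\leq_S$ on $G$ is trivial: no two distinct elements of $G$ are comparable. Granting that, every subset $A \subseteq G$ is admissible, so there are exactly $2^t$ relative ideals $I$ with $K-(M-M) \subseteq I \subseteq K$, hence $2^t$ almost canonical ideals with Frobenius number $\F(S)$; and choosing $|A| = i-1$ for $0 \le i-1 \le t$ gives $\binom{t}{i-1}$ such ideals. For the type count I would use Proposition 2.5(1) together with the observation that for $I_A$ with $\F(I_A) = \F(S)$ the pseudo-Frobenius set is $\PF(I_A) = (I_A - M)\setminus I_A$; since $I_A - M = K-M = K \cup \{\F(S)\}$ by Remark \ref{Rem as}.3 and condition (3) of Proposition \ref{almost canonical ideal} (as $I_A$ is almost canonical), we get $\PF(I_A) = (K \cup \{\F(S)\}) \setminus I_A = \{\F(S)\} \sqcup (G \setminus A)$, which has cardinality $1 + (t - |A|)$. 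Setting $i = 1 + (t-|A|)$, i.e. $|A| = t - (i-1) = t + 1 - i$, shows that type $i$ corresponds to $|A| = t+1-i$, and there are $\binom{t}{t+1-i} = \binom{t}{i-1}$ such subsets; in particular $1 \le t(I) \le t+1$.

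The main obstacle is the claim that $\leq_S$ restricts trivially to $G = K \setminus (K-(M-M))$. To see it: suppose $x, y \in G$ with $y = x + s$, $s \in S \setminus \{0\}$. I would argue via the reciprocity $x \in I \iff \F(S) - x \notin K-I$ (\cite[Hilfssatz 5]{J}) that $x \in K \setminus (K-(M-M))$ translates to $\F(S)-x \in (K-K)\setminus (M-M) = S \setminus (M-M)$ is impossible — rather, one wants the dual: $G$ corresponds under $x \mapsto \F(S)-x$ to exactly $(M-M)\setminus S = \PF(S)$, and comparability $y = x+s$ becomes $\F(S)-y = (\F(S)-x) - s$, i.e. two pseudo-Frobenius numbers of $S$ differing by a nonzero element of $S$; but pseudo-Frobenius numbers are by definition the elements of $(S-M)\setminus S$, equivalently (Section 1) of the form $\omega - e$ for $\omega$ maximal in $\Ap(S)$, and two such cannot be $\leq_S$-comparable inside $\N \setminus S$ in a way that keeps both outside $S$ — more directly, if $f, f' \in \PF(S)$ and $f = f' + s$ with $s \in S\setminus\{0\}$, then writing $f = \omega - e$, $f' = \omega' - e$ with $\omega, \omega' \in \Max_{\leq_S}(\Ap(S))$ gives $\omega = \omega' + s$, contradicting maximality of $\omega'$ (after reducing $s$ modulo $e$, using that $\omega,\omega'$ are the minimal representatives of their congruence classes). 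This dualization step, and getting the direction of the order-correspondence right, is the only delicate point; everything else is the bookkeeping indicated above.
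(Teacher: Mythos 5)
Your proposal is correct, and it follows the same skeleton as the paper's proof: reduce via Proposition \ref{almost canonical ideal}(4) to counting relative ideals $I$ with $K-(M-M)\subseteq I\subseteq K$, note that $|K\setminus(K-(M-M))|=|(M-M)\setminus S|=t$, show that \emph{every} subset $A$ of this $t$-element set yields a relative ideal $I_A=(K-(M-M))\cup A$, and compute $t(I_A)=t+1-|A|$. The differences are at the level of individual steps. Where you dualize (the map $x\mapsto \F(S)-x$ identifies $K\setminus(K-(M-M))$ with $\PF(S)$) and then use that two pseudo-Frobenius numbers never differ by a nonzero element of $S$ --- your Ap\'ery-set argument is valid, though it is immediate from the definition: $f'\in S-M$ forces $f'+s\in S$, contradicting $f=f'+s\notin S$ --- the paper checks directly that $x+m\in K-(M-M)$ for $x\in K$ and $m\in M$, since $x+m+y\in x+M\subseteq K$ for every $y\in M-M$; two lines, same content (no element of the gap set has a $\leq_S$-successor in it, so upward-closedness is vacuous). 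For the type, you compute $\PF(I_A)=\{\F(S)\}\cup(K\setminus I_A)$ from condition (3) of Proposition \ref{almost canonical ideal}, which is self-contained, whereas the paper invokes \cite[Lemma 4.7]{DS} together with the duality $|(K-I)\setminus S|=|K\setminus I|$. Finally, the paper proves the bound $t(I)\leq t+1$ for an \emph{arbitrary} almost canonical ideal by a separate genus computation using condition (2), while you obtain it from the explicit type formula for the representatives with Frobenius number $\F(S)$; this works, but you should state explicitly that $t(I)=t(\widetilde I)$ and that $\widetilde I$ is again almost canonical with $\F(\widetilde I)=\F(S)$, so the bound transfers from the representatives to all almost canonical ideals.
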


\begin{proof}
Let $C=\{s \in S \mid s>\F(S)\}=K-\mathbb{N}$ be the conductor of $S$ and let $n(S)=|\{s \in S \mid s<\F(S)\}|$. It is straightforward to see that $g(S)+n(S)=\F(S)+1$. If $I$ is almost canonical, Proposition \ref{almost canonical ideal} implies that
\begin{align*}
t(I)&=g(I)+g(S)-\F(S)\leq |\mathbb{N}\setminus (K-(M-M))|-n(S)+1= \\
&=|(M-M)\setminus (K-\mathbb{N})|-n(S)+1
=|(M-M)\setminus C|-n(S)+1=\\
&=|(M-M)\setminus S|+|S \setminus C|-n(S)+1=t+n(S)-n(S)+1=t+1.
\end{align*}
By Proposition \ref{almost canonical ideal} an ideal $I$ with $\F(I)=\F(S)$ is almost canonical if and only if $K-(M-M) \subseteq I \subseteq K$ and we notice that $|K \setminus (K-(M-M))|=|(M-M)\setminus S|=t$. Let $A \subseteq (K \setminus (K-(M-M)))$ and consider $I=(K-(M-M)) \cup A$. We claim that $I$ is an ideal of $S$. Indeed, let $x \in A$, $m \in M$ and $y \in (M-M)$. It follows that $m+y \in M$ and, then, $x+m+y \in K$, since $K$ is an ideal. Therefore, $x+m \in K-(M-M)$ and $I$ is an ideal of $S$. Moreover, by \cite[Lemma 4.7]{DS}, $t(I)=|(K-I)\setminus S|+1=|K\setminus I|+1=t+1-|A|$ and the thesis follows, because there are $\binom{t}{i-1}$ subsets of $K\setminus (K-(M-M))$ with cardinality $t+1-i$.
\end{proof}

If $S$ is a symmetric semigroup, the only almost canonical ideals with Frobenius number $\F(S)$ are $M$ and $S$. In this case $t(M)=t(S)+1=2$.
If $S$ is pseudo-symmetric, the four almost canonical ideals with Frobenius number $\F(S)$ are $M$, $S$, $M \cup \{\F(S)/2\}$ and $K$. In this case $t(M)=3$, $t(S)=t(M \cup \{\F(S)/2\})=2$ and $t(K)=1$.

\section{GAS numerical semigroups}

In \cite{CGKM} it is introduced the notion of $n$-almost Gorenstein local rings, briefly $n$-AGL rings, where $n$ is a non-negative integer. These rings generalize almost Gorenstein ones that are obtained when either $n=0$, in which case the ring is Gorenstein, or $n=1$. In particular, in \cite{CGKM} it is studied the case of the 2-AGL rings, that are closer to be almost Gorenstein, see also \cite{GIT}. 

Given a numerical semigroup $S$ with standard canonical ideal $K$ we denote by $\langle K \rangle$ the numerical semigroup generated by $K$. Following \cite{CGKM} we say that $S$ is $n$-AGL if $|\langle K \rangle \setminus K|=n$. It follows that $S$ is symmetric if and only if it is 0-AGL, whereas it is almost symmetric and not symmetric if and only if it is 1-AGL.

It is easy to see that a numerical semigroup is 2-AGL if and only if $2K=3K$ and $|2K\setminus K|=2$, see \cite[Theorem 1.4]{CGKM} for a proof in a more general context. We now give another easy characterization that will lead us to generalize this class.

\begin{proposition}
A numerical semigroup $S$ is 2-AGL if and only if $2K=3K$ and $2K \setminus K=\{\F(S)-x, \F(S)\}$ for a minimal generator $x$ of $S$.
\end{proposition}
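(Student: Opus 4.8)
The plan is to leverage the characterization of 2-AGL semigroups recalled just before the statement, namely that $S$ is 2-AGL if and only if $2K=3K$ and $|2K\setminus K|=2$ (see \cite[Theorem 1.4]{CGKM}). Granting this, the only real task is to identify, in the 2-AGL case, which two elements constitute $2K\setminus K$; the condition $2K=3K$ is simply carried along unchanged on both sides of the equivalence.

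First I would record two elementary facts: $\F(S)\notin K$, because $\F(S)-\F(S)=0\in S$; and $2K$ is a relative ideal of $S$, so in particular $2K+S\subseteq 2K$. Now suppose $S$ is 2-AGL, so that $2K\setminus K$ has exactly two elements, and pick any $w'\in 2K\setminus K$. From $w'\in\mathbb{N}\setminus K$ we get $\F(S)-w'\in S$, hence $\F(S)=w'+(\F(S)-w')\in 2K+S\subseteq 2K$; together with $\F(S)\notin K$ this gives $\F(S)\in 2K\setminus K$. Therefore $2K\setminus K=\{\F(S),w\}$ with $w\ne\F(S)$, and in fact $w<\F(S)$ because $w\in\mathbb{N}\setminus K$ forces $\F(S)-w\in S$. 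Put $x:=\F(S)-w$; then $x\in S$ and $x\ge 1$, i.e. $x\in M$.

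The heart of the argument is to show that this $x$ is a minimal generator of $S$, i.e. that $x\notin M+M$. Assume for contradiction that $x=a+b$ with $a,b\in M$, and consider $w+a$. Since $a\in S$ and $2K$ is a relative ideal, $w+a\in 2K$; and since $\F(S)-(w+a)=x-a=b\in S$, we have $w+a\notin K$. Hence $w+a\in 2K\setminus K=\{\F(S),w\}$. But $w+a>w$ because $a>0$, and $w+a=\F(S)$ would force $a=\F(S)-w=x$ and therefore $b=0$, contradicting $b\in M$. This contradiction shows that $x$ is a minimal generator, so $2K\setminus K=\{\F(S)-x,\F(S)\}$ with $x$ a minimal generator; together with $2K=3K$ this is the forward implication.

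For the converse, suppose $2K=3K$ and $2K\setminus K=\{\F(S)-x,\F(S)\}$ for a minimal generator $x$. Then $x\ge 1$, so $\F(S)-x\ne\F(S)$ and $|2K\setminus K|=2$; the cited characterization then yields that $S$ is 2-AGL. I expect the only genuinely delicate point to be the decomposition step for $x$ in the third paragraph — it is precisely there that the argument produces a \emph{minimal} generator rather than merely an element of $M$; everything else is bookkeeping with the definition $K=\{y\in\mathbb{N}\mid \F(S)-y\notin S\}$ and with the relative-ideal property of $2K$.
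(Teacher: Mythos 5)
Your proposal is correct and follows essentially the same route as the paper: both reduce to the characterization that $S$ is 2-AGL if and only if $2K=3K$ and $|2K\setminus K|=2$, note $\F(S)\in 2K\setminus K$, and then show the second element has the form $\F(S)-x$ with $x$ a minimal generator by the same decomposition trick (if $x=a+b$ with $a,b\in M$, then $w+a=\F(S)-b$ lies in $2K\setminus K$ and yields too many elements there). The only cosmetic difference is that you obtain $\F(S)\in 2K$ from an arbitrary element of $2K\setminus K$ plus the relative-ideal property, while the paper uses non-symmetry of $S$; this changes nothing of substance.
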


\begin{proof}
One implication is trivial, so assume that $S$ is 2-AGL.
Since $S$ is not symmetric, there exists $k \in \mathbb{N}$ such that $k$ and $\F(S)-k$ are in $K$ and so $\F(S) \in 2K \setminus K$. Let now $a \in (2K \setminus K) \setminus \{\F(S)\}$. Since $a \notin K$, we have $\F(S)-a \in S$. Assume that $\F(S)-a=s_1+s_2$ with $s_1,s_2 \in S \setminus \{0\}$. It follows that $\F(S)-s_1=a+s_2 \in 2K$, since $2K$ is a relative ideal, and by definition $\F(S)-s_1 \notin K$. Therefore, $\{a,\F(S)-s_1,\F(S)\} \subseteq 2K \setminus K$ and this is a contradiction, since $S$ is 2-AGL. Hence, $a=\F(S)-x$, where $x$ is a minimal generator of $S$.
\end{proof}

In light of the previous proposition we propose the following definition.

\begin{definition} \rm 
We say that $S$ is a {\it generalized almost symmetric} numerical semigroup, briefly {\rm GAS} numerical semigroup, if either $2K=K$ or $2K \setminus K=\{\F(S)-x_1, \dots, \F(S)-x_r, \F(S)\}$ for some $r \geq 0$ and some minimal generators $x_1, \dots, x_r$ of $S$ such that $x_i-x_j \notin \PF(S)$ for every $i,j$.
\end{definition}

The last condition could seem less natural, but these semigroups have a better behaviour. For instance, in Theorem \ref{Livelli più alti} we will see that this condition ensures that every element in $ \langle K \rangle \setminus K$ can be written as $\F(S)-x$ for a minimal generator $x$ of $S$. 

We recall that $S$ is symmetric if and only if $2K=K$ and it is almost symmetric exactly when $2K \setminus K \subseteq \{\F(S)\}$.

\begin{examples} \rm
{\bf 1.} Let $S= \langle 9, 24, 39, 43, 77 \rangle$. Then, $\PF(S)=\{58, 73, 92, 107\}$ and $2K \setminus K=\{107-77,107-43,107-39,107-24,107-9,107\}$. Hence, $S$ is a GAS semigroup. \\
{\bf 2.} If $S=\langle 7,9,15 \rangle$, we have $2K=3K$ and $2K \setminus K=\{26-14,26-7,26\}$. Hence, $S$ is 3-AGL but it is not GAS because $14$ is not a minimal generator of $S$. \\
{\bf 3.} Consider the semigroup $S=\langle 8, 11, 14, 15, 17, 18, 20, 21 \rangle$. We have $2K \setminus K=\{13-11,13-8,13\}$, but $S$ is not GAS because $11-8 \in \PF(S)$. In this case $2K=3K$ and thus $S$ is 3-AGL.
\end{examples}

The last example shows that in a numerical semigroup $S$ with maximal embedding dimension there could be many minimal generators $x$ such that $\F(S) -x \in 2K\setminus K$. This is not the case if we assume that $S$ is GAS.

\begin{proposition} \label{MED}
If $S$ has maximal embedding dimension $e$ and it is {\rm GAS}, then it is either almost symmetric or {\rm 2-AGL} with $2K\setminus K=\{\F(S)-e,\F(S)\}$.
\end{proposition}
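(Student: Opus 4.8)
The plan is to leverage the fact that maximal embedding dimension means $M - e = M - M$, combined with the GAS hypothesis, to pin down the minimal generators $x_i$ appearing in $2K \setminus K$. First I would dispose of the trivial case: if $2K = K$ then $S$ is symmetric, hence almost symmetric, and we are done. So assume $2K \setminus K = \{\F(S) - x_1, \dots, \F(S) - x_r, \F(S)\}$ with the $x_i$ minimal generators of $S$ satisfying $x_i - x_j \notin \PF(S)$ for all $i, j$. If $r = 0$ then $2K \setminus K \subseteq \{\F(S)\}$, which is exactly the condition for $S$ to be almost symmetric, and again we are done. Hence I may assume $r \geq 1$, and the goal becomes to show $r = 1$ and $x_1 = e$, which forces $|2K \setminus K| = 2$; together with $2K = 3K$ (which I will need to verify, or which follows from the structure — see below) this gives 2-AGL.

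The key observation is that when $S$ has maximal embedding dimension, every minimal generator $x$ other than $e$ satisfies $x - e \in M - M = M - e$, i.e.\ $x - e \in S \cup \PF(S)$; more precisely, since $x$ is a minimal generator, $x - e \notin S$, so $x - e \in \PF(S)$. Now take any two of our generators $x_i, x_j$ with, say, both different from $e$: then $x_i - e \in \PF(S)$, contradicting nothing by itself, but I want to play off the hypothesis $x_i - x_j \notin \PF(S)$. The cleaner route: suppose $x_i \neq e$ for some $i$. Then $x_i - e \in \PF(S)$. Consider whether $e$ is among the $x_j$'s. If $e = x_j$ for some $j$, then $x_i - x_j = x_i - e \in \PF(S)$, directly contradicting the GAS condition. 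So if $r \geq 1$ and all the listed generators are $\neq e$, I still need an argument; here I would use that $\F(S) - e$ must itself lie in $2K \setminus K$: indeed $e \in M$ and one checks (using that $\F(S) \in 2K$ since $S$ is not symmetric, so there is $k$ with $k, \F(S)-k \in K$, and then... ) — more carefully, I would argue $\F(S) - e \notin K$ (since $e \in S$, were $\F(S) - e \in K$ and $\F(S) \in \dots$; actually $\F(S) - e \notin K$ is false in general). Let me instead argue: since $S$ is not symmetric there is $k \in K$ with $\F(S) - k \in K$ too; adding a suitable element of $K \subseteq \mathbb{N}$ we can arrange an element of $2K \setminus K$ of the form $\F(S) - s$ with $s$ as small as possible, and minimality of $s$ together with the structure of $2K\setminus K$ forces $s$ to be a minimal generator; I then show the smallest such $s$ is $e$ by a maximal-embedding-dimension argument.

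The main obstacle, as the sketch above reveals, is establishing that $\F(S) - e \in 2K \setminus K$ and that $e$ must appear among the $x_i$ — i.e.\ ruling out that all the generators $x_i$ occurring are strictly larger than $e$. The mechanism I expect to work: the minimal element of $\{\, s \in S \setminus\{0\} : \F(S) - s \in 2K \setminus K \,\}$ is nonempty (since $S$ is not symmetric forces $\F(S) \in 2K \setminus K$ and then, using that $2K$ is a relative ideal and not equal to $K$, one produces $\F(S) - s$ for some $s \in M$), and this minimal $s$ must be a minimal generator by the argument already used in the proof of Proposition~3.1 (if $s = s_1 + s_2$ then $\F(S) - s_1 \in 2K \setminus K$ with $s_1 < s$). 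Call it $x_1$. Now if $x_1 > e$, maximal embedding dimension gives $x_1 - e \in \PF(S)$. But also $\F(S) - e$: I claim $\F(S) - e \in 2K \setminus K$ as well — this is where I expect to spend the most effort, plausibly showing $\F(S)-e \notin K$ directly (since $e$ being the multiplicity and $x_1$ being minimal among such $s$, if $\F(S) - e \in K$ then... ) and $\F(S) - e \in 2K$ because $\F(S) - e = (\F(S) - x_1) + (x_1 - e)$ with $\F(S) - x_1 \in 2K$ and $x_1 - e \in \PF(S) \subseteq M - M$, and $2K$ is a relative ideal over $M-M$? — one needs $2K + (M-M) \subseteq 2K$, which holds since $M - M \subseteq \langle K \rangle$ when... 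Hmm. If this works, then $e$ is itself one of the allowed generators, and then $x_1 - e \in \PF(S)$ contradicts $x_1 - e = x_1 - (\text{the generator } e) \notin \PF(S)$. Hence $x_1 = e$. Finally, for any other $x_j$ in the list with $x_j \neq e$, the same reasoning: $x_j - e \in \PF(S)$ and $e$ is in the list, contradiction. So $r = 1$ and $x_1 = e$, giving $2K \setminus K = \{\F(S) - e, \F(S)\}$, hence $|2K \setminus K| = 2$; combined with $2K = 3K$ (which, if not already part of the GAS definition's consequences via Proposition~3.1's reasoning, I would verify here), $S$ is 2-AGL by the characterization recalled before Proposition~3.1.
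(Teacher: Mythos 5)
Your reductions (the symmetric case, the case $r=0$) and the branch where $e$ occurs among the $x_i$ are fine and coincide with part of the paper's argument: if $\F(S)-e\in 2K\setminus K$ and some $x\neq e$ also satisfies $\F(S)-x\in 2K\setminus K$, then $x-e\in\PF(S)$ (maximal embedding dimension plus minimality of the generator $x$) violates the GAS difference condition. The genuine gap is the remaining, main case: some $\F(S)-x\in 2K\setminus K$ with $x\neq 0,e$, and you must still reach a contradiction. Your plan is to force $\F(S)-e\in 2K\setminus K$ by writing $\F(S)-e=(\F(S)-x)+(x-e)$; but since $x-e\in\PF(S)\setminus\{\F(S)\}\subseteq K$, this only puts $\F(S)-e$ in $3K$, and the containment you would need, $2K+(M-M)\subseteq 2K$, is exactly the point you could not justify (``if this works'') and is not available in general: $K-K=S$, so ideals built from $K$ do not absorb pseudo-Frobenius numbers for free. (Incidentally, the part you worried about is the trivial one: $\F(S)-e\notin K$ always, because $e\in S$.) The paper resolves this case in the opposite direction: assuming $x\neq e$, it observes $\F(S)-e\in 3K\setminus K$ and that $\F(S)-e\notin 2K$ (otherwise your first branch applies); writing $\F(S)-x=k_1+k_2$, it follows that $k_1+(x-e)\in 2K\setminus K$, so GAS forces $\F(S)-k_1-x+e$ to be a minimal generator, maximal embedding dimension gives $\F(S)-k_1-x\in\PF(S)\cup\{0\}$, hence $\F(S)-k_1\in S$, contradicting $k_1\in K$. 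Without this step, or a substitute (for instance, Theorem \ref{Livelli più alti}, whose proof does not depend on this proposition: $\F(S)-e\in 3K\setminus K\subseteq\langle K\rangle\setminus K$, and the difference condition there gives $x-e\notin\PF(S)$, a contradiction), your argument does not close.

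A second, smaller gap: you defer $2K=3K$, but it is neither part of the GAS definition nor a consequence of $|2K\setminus K|=2$, and you need it for the 2-AGL characterization you quote. The paper proves it separately: if $\F(S)-y=k_1+k_2+k_3\in 3K\setminus 2K$, then $k_1+k_2\in 2K\setminus K$ must equal $\F(S)-e$, forcing $0<y<e$ with $y\in S$, which is impossible. (Alternatively, once one knows $\langle K\rangle\setminus K=\{\F(S)-e,\F(S)\}$, the conclusion follows directly from the definition of 2-AGL as $|\langle K\rangle\setminus K|=2$, but your write-up establishes neither of these.)
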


\begin{proof}
Assume that $S$ is not almost symmetric and let $\F(S)-x=k_1+k_2 \in 2K\setminus K$ with $x\neq 0$ and $k_1,k_2 \in K$. Let $x\neq e$ and consider $\F(S)-e=k_1+k_2+x-e$. Since $x-e \leq \F(S)-e < \F(S)$ and $S$ has maximal embedding dimension, $x-e \in \PF(S) \setminus \{\F(S)\} \subseteq K$ and, therefore, $\F(S)-e \in 3K \setminus K$. Moreover, $\F(S)-e$ cannot be in $2K$, because $S$ is GAS and $x-e \in \PF(S)$, then, $k_1+x-e \in 2K \setminus K$. Hence, we have $\F(S)-(\F(S)-k_1-x+e) \in 2K\setminus K$ and, thus, $\F(S)-k_1-x+e$ is a minimal generator of $S$. Since $S$ has maximal embedding dimension, this implies that $\F(S)-k_1-x \in \PF(S)$ and, then, $\F(S)-k_1 \in S$ yields a contradiction, since $k_1 \in K$. This means that $x=e$ and $2K\setminus K=\{\F(S)-e,\F(S)\}$.

Suppose by contradiction that $2K \neq 3K$ and let $\F(S)-y \in 3K \setminus 2K$. In particular, $\F(S)-y \notin K$ and, therefore, $y \in S$. If $\F(S)-y =k_1+k_2+ k_3$ with $k_i \in K$ for every $i$, then $k_1+k_2 \in 2K \setminus K$ and, thus, $k_1+k_2=\F(S)-e$. This implies that $\F(S)-e<\F(S)-y$, i.e. $y<e$, that is a contradiction.
\end{proof}

In particular, we note that in a 2-AGL semigroup with maximal embedding dimension it always holds that $2K \setminus K=\{\F(S)-e, \F(S)\}$.

\begin{proposition} \label{Characterizations GAS}
Given a numerical semigroup $S$, the following conditions are equivalent:
\begin{enumerate}
\item $S$ is {\rm GAS};
\item $x-y \notin (M-M)$ for every different $x,y \in M\setminus (S-K)$;
\item either $S$ is symmetric or $2M \subseteq S-K \subseteq M$ and $M-M=((S-K)-M) \cup \{0\}$.
\end{enumerate} 
\end{proposition}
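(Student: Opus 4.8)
The plan is to prove the cycle $(1) \Rightarrow (2) \Rightarrow (3) \Rightarrow (1)$, translating the condition on $2K \setminus K$ into conditions on $M-M$ via the duality $K-(K-I)=I$ and the identities relating $K$, $M$, $S-K$, and $M-M$. The first thing I would establish is a dictionary between the two sides. The key relations are: $K-S = K$, $K-M = K \cup \{\F(S)\}$ (Remark 2.4.3), and, most importantly, a description of $2K \setminus K$ in terms of $S-K$. Since $2K = K + K$ and $K - (K-I) = I$, an element $\F(S)-x$ lies in $2K$ iff $x \notin K - 2K$; and one computes $K-2K = K-(K+K) = (K-K)-K = S-K$ using $K-K = S$. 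So $\F(S)-x \in 2K \setminus K$ is equivalent to $x \in S \setminus (S-K)$ (the condition $\F(S)-x \notin K$ being exactly $x \in S$). Likewise, $2K = 3K$ translates, after applying $K - (\cdot)$ twice, to $S-K = (S-K)-M$ or a similar idempotency statement, which I would pin down carefully.

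With this dictionary in hand, $(1) \Rightarrow (2)$ becomes almost a restatement. If $S$ is symmetric, $2K = K$ forces $S \setminus (S-K) = \emptyset$, hence $M \setminus (S-K) = \emptyset$ and (2) is vacuous. Otherwise, by the dictionary the set $2K \setminus K = \{\F(S)-x_1, \dots, \F(S)-x_r, \F(S)\}$ corresponds exactly to $\{x_1, \dots, x_r\} = S \setminus (S-K)$ (note $\F(S) \in 2K$ corresponds to $0 \notin S-K$ — but $0 \notin S-K$ would be wrong; rather $\F(S) \in 2K \setminus K$ always when $S$ is not symmetric, and this corresponds to $0 \in S \setminus (S-K)$, i.e. $0 \notin S-K$, which indeed holds since $S-K \subseteq M$ when... this needs a small check: one must verify $0 \in S-K$ iff $S$ symmetric). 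Modulo getting these boundary cases right, the GAS definition says precisely that $S \setminus (S-K) = \{x_1, \dots, x_r\}$ consists of minimal generators with $x_i - x_j \notin \PF(S)$; and since $\F(S) \notin M$, we get $M \setminus (S-K) \subseteq S \setminus (S-K) = \{x_1,\dots,x_r\}$, all of which are minimal generators (so in $M \setminus 2M$), and $x_i - x_j \notin \PF(S) \cup \{0\} \supseteq$... wait, I need $x_i - x_j \notin M-M = S \cup \PF(S)$. For distinct minimal generators $x_i \neq x_j$, $x_i - x_j \notin S$ automatically (a minimal generator is not a sum), and $x_i - x_j \notin \PF(S)$ by hypothesis; this gives (2). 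Conversely this same computation reverses to give a chunk of $(2) \Rightarrow (1)$, but one still needs the $2K = 3K$ part — which is where I expect the main work.

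For the equivalence with (3): condition (3) packages "$2K = 3K$" as "$M-M = ((S-K)-M) \cup \{0\}$" together with the normalization $2M \subseteq S-K \subseteq M$ (which should correspond to $\F(S)-x \in 2K \setminus K$ forcing $x$ to be a minimal generator — i.e. $x \notin 2M$ — exactly the content of $S \setminus (S-K) \subseteq M \setminus 2M$, plus $S-K \subseteq M$ meaning $\F(S) \in 2K \setminus K$, plus $S-K \supseteq$ ... no: $2M \subseteq S-K$ says every non-minimal-generator $x \in 2M$ has $\F(S)-x \notin 2K \setminus K$). So I would prove $(2) \Leftrightarrow (3)$ by showing each of the three clauses of (3) matches a clause of the GAS-type description of $M \setminus (S-K)$ obtained from (2). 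Specifically: $S-K \subseteq M$ is automatic once $S$ is not symmetric; $2M \subseteq S-K$ says $M \setminus (S-K)$ contains only minimal generators; $M-M = ((S-K)-M)\cup\{0\}$ is the translation of $2K = 3K$ AND simultaneously encodes $x_i - x_j \notin \PF(S)$ — indeed $x_i \in M \setminus (S-K)$ means $x_i \notin S-K$, i.e. $x_i + K \not\subseteq S$... I would show $x \notin (S-K)-M$ iff $x + M \not\subseteq S-K$ iff there is $m\in M$ with $x+m \notin S-K$, and relate membership of $x_i - x_j$ in $M-M$ to this. The main obstacle, I expect, is handling the $2K=3K$ versus $2K \neq 3K$ dichotomy cleanly: in the GAS definition the equality $2K = 3K$ is not assumed but is a \emph{consequence} (one should check that if $2K \setminus K$ consists only of $\F(S)$ and $\F(S)-x_i$ for minimal generators with the difference condition, then automatically $2K = 3K$ — analogous to the last paragraph of the proof of Proposition 3.7), and mirroring this inside condition (3) requires care. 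I would therefore first prove the lemma-level fact that under the GAS hypothesis $2K=3K$ holds, reusing the argument in Proposition 3.7: if $\F(S)-y \in 3K \setminus 2K$ then writing $\F(S)-y = k_1+k_2+k_3$ gives $k_1 + k_2 \in 2K \setminus K$, hence $k_1+k_2 = \F(S)-x_i$ for some $i$ (or $=\F(S)$), forcing $y = x_i - k_3 \in \PF(S) \cup \{0\}$ after another duality step, contradicting the difference condition — and then transcribe this into the statement $M-M = ((S-K)-M) \cup \{0\}$.
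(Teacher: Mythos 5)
Your ``dictionary'' is exactly the paper's key computation ($K-2K=(K-K)-K=S-K$, so $\F(S)-x\in 2K\setminus K$ if and only if $x\in S\setminus(S-K)$, and $0\in S-K$ precisely when $S$ is symmetric), and with it your $(1)\Rightarrow(2)$ is correct. The ``chunk'' you leave open in $(2)\Rightarrow(1)$ is genuinely needed but short, and it is the paper's argument: if $x\in M\setminus(S-K)$ and $x=s_1+s_2$ with $s_1,s_2\in M$, then $s_1\in M\setminus(S-K)$ as well, and $x-s_1=s_2\in M\subseteq M-M$ contradicts (2); so every such $x$ is a minimal generator, and no further input is required.

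The genuine gap is everything you hang on $2K=3K$. The GAS definition does not assume $2K=3K$, and contrary to your plan it is \emph{not} a consequence of it: Example \ref{GAS tipo 2} ($S=\langle 5,6,7\rangle$, $\F(S)=9$) is GAS, yet $3K\setminus 2K=\{3\}$ and $4K\setminus 3K=\{4\}$; Proposition \ref{type 2} produces such examples systematically. Your sketched lemma-proof (imitating the 2-AGL argument of Proposition \ref{MED}) indeed collapses there: from $\F(S)-y=3=1+1+1$ one gets $k_1+k_2=2=\F(S)-7$ and $y=7-k_3=6\in S$, which contradicts nothing. Consequently your reading of the clause $M-M=((S-K)-M)\cup\{0\}$ in (3) as ``the translation of $2K=3K$'' is also false (condition (3) is equivalent to GAS, which does not force $2K=3K$), so the $(2)\Leftrightarrow(3)$ part of your plan would not go through as designed. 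The correct argument never leaves the level of $2K$: one always has $((S-K)-M)\cup\{0\}\subseteq M-M$, and for the converse one takes $0\neq z\in M-M$, $m\in M$ and checks $z+m\in S-K$ in three cases: if $m\in M\setminus(S-K)$ and $z+m\notin S-K$, then $z=(z+m)-m$ is a difference of two distinct elements of $M\setminus(S-K)$ lying in $M-M$, against (2); if $m\in(S-K)\setminus 2M$, then $m+k\in M$ for every $k\in K$, hence $z+m+k\in M$ and $z+m\in S-K$; if $m\in 2M$, then $z+m\in 2M\subseteq S-K$, where $2M\subseteq S-K$ is first deduced from (2) (as you indicate) and $S-K\subseteq M$ is automatic for non-symmetric $S$. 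The implication $(3)\Rightarrow(2)$ is then one line: if $x,y\in M\setminus(S-K)$, $x\neq y$, and $x-y\in M-M=((S-K)-M)\cup\{0\}$, then $x=(x-y)+y\in S-K$, a contradiction.
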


\begin{proof}
If $S$ is symmetric, then $M \subseteq S-K$ and both (1) and (2) are true, so we assume $S \neq K$. \\ 
$(1) \Rightarrow (2)$ Note that $K-S=K$ and $K-(S-K)=K-((K-K)-K)=K-(K-2K)=2K$. Thus, $x \in S \setminus (S-K)$ if and only if $\F(S)-x \in (K-(S-K))\setminus (K-S)=2K \setminus K$. Hence, if $S$ is GAS, then $x-y \notin S \cup \PF(S)=M-M$ for every $x,y \in M\setminus (S-K)$. \\
$(2) \Rightarrow (1)$ If $x$, $y \in M \setminus (S-K)$, then $\F(S)-x$, $\F(S)-y \in 2K \setminus K$ and $x-y \notin \PF(S)$, since it is not in $M-M$. We only need to show that $x$ is a minimal generator of $S$. If by contradiction $x=s_1+s_2$, with $s_1$, $s_2 \in M$, it follows that also $s_1$ is in $M \setminus (S-K)$. Therefore, $s_2=x-s_1 \in M$ yields a contradiction since $x-s_1 \notin M-M$ by hypothesis. \\ 
$(2) \Rightarrow (3)$ Since $S$ is not symmetric, $S-K$ is contained in $M$. Moreover, if $2M$ is not in $S-K$, then there exist $m_1, m_2 \in M$ such that $m_1+m_2 \in 2M \setminus (S-K)$. Clearly also $m_1$ is not in $S-K$ and $(m_1+m_2)-m_1=m_2 \in M \subseteq M-M$ yields a contradiction. 

It always holds that $((S-K)-M) \cup \{0\} \subseteq M-M$, then given $x \in (M-M) \setminus \{0\}$ and $m \in M$, we only need to prove that $x+m \in S-K$. If $m \in M \setminus (S-K)$ and $x+m \notin S-K$, then $(x+m)-m=x \in M-M$ gives a contradiction.
If $m \in (S-K) \setminus 2M$ and $k \in K$, then $0 \neq m+k \in S$ and, so, $x+m+k \in M$, that implies $x+m \in S-K$.
Finally, if $m \in 2M$, then $x+m \in 2M \subseteq S-K$. \\
$(3) \Rightarrow (2)$ Let $x,y \in M \setminus (S-K)$ with $x \neq y$ and assume by contradiction that $x-y \in (M-M)=((S-K)-M)\cup \{0\}$. By hypothesis $y \in M$, then $x=(x-y)+y \in S-K$ yields a contradiction.
\end{proof}

In the definition of GAS semigroup we required that in $2K\setminus K$ there are only elements of the type $\F(S)-x$ with $x$ minimal generator of $S$. In general, this does not imply that the elements in $3K\setminus 2K$ are of the same type. For instance, consider $S=\langle 8,12,17,21,26,27,30,31 \rangle$, where $2K \setminus K=\{23-21,23-17,23-12,23-8,23\}$ and $3K\setminus 2K=\{23-20,23-16\}$. However, by Proposition \ref{MED}, this semigroup is not GAS. In fact, this never happens in a GAS semigroup as we are going to show in Theorem \ref{Livelli più alti}. First we need a lemma.

\begin{lemma} \label{Lemma livelli più alti}
Assume that $2K \setminus K=\{\F(S)-x_1, \dots, \F(S)-x_r, \F(S)\}$ with $x_1, \dots, x_r$ minimal generators of $S$. If $\F(S)-x \in nK \setminus (n-1)K$ for some $n>2$ and $x=s_1+s_2$ with $s_1$, $s_2 \in M$, then $\F(S)-s_1 \in (n-1)K$.
\end{lemma}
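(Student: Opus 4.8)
The plan is to peel one ``$K$-level'' off the element $\F(S)-x$ by using the hypothesis on $2K\setminus K$, and then to slide the free summand $s_2$ into that peeled-off block so that $\F(S)-s_1$ lands one level lower.

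First I would produce a suitable decomposition of $\F(S)-x$. Since $\F(S)-x\in nK=2K+(n-2)K$ and $n\ge 3$ (so that $(n-2)K\subseteq\mathbb N$), choose $a\in 2K$ and $z\in(n-2)K$ with $\F(S)-x=a+z$. If $a$ lay in $K$ then $\F(S)-x\in K+(n-2)K=(n-1)K$, contrary to hypothesis, so $a\in 2K\setminus K=\{\F(S)-x_1,\dots,\F(S)-x_r,\F(S)\}$. The value $a=\F(S)$ is impossible, since it would force $z=-x<0$ whereas $z\in(n-2)K\subseteq\mathbb N$; hence $a=\F(S)-x_i$ for one of the minimal generators $x_i$, and accordingly $z=x_i-x\in(n-2)K$. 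In particular $x\le x_i$, and indeed $x<x_i$ because $x=s_1+s_2\in 2M$ cannot be a minimal generator.

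Then I would transfer this to $\F(S)-s_1$ via the regrouping
\[
\F(S)-s_1=(\F(S)-x)+s_2=(\F(S)-x_i)+(x_i-x)+s_2=(\F(S)-x_i+s_2)+(x_i-x),
\]
in which the summand $x_i-x$ equals $z\in(n-2)K$, so everything reduces to checking that $\F(S)-x_i+s_2\in K$. This element is a nonnegative integer, since $\F(S)-x_i=a\ge 0$, and $\F(S)-(\F(S)-x_i+s_2)=x_i-s_2$ is a positive integer because $s_2<x<x_i$; finally $x_i-s_2\notin S$, for otherwise $x_i-s_2\in M$ and then $x_i=s_2+(x_i-s_2)\in 2M$ would contradict the minimality of $x_i$. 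Hence $\F(S)-x_i+s_2\in K$, and therefore $\F(S)-s_1\in K+(n-2)K=(n-1)K$, as wanted.

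The step I expect to be the crux is realizing that the naive estimate $\F(S)-s_1=(\F(S)-x)+s_2\in nK$ is too weak: the extra level is recovered only by pushing $s_2$ across the block $\F(S)-x_i\in 2K\setminus K$, and it is precisely there that the minimality of $x_i$ (equivalently $x_i\notin 2M$), hence the hypothesis on $2K\setminus K$, is genuinely used. In particular no induction on $n$ is needed.
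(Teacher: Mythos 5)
Your argument is correct and is essentially the paper's proof: writing $\F(S)-x=a+z$ with $a\in 2K\setminus K$ and $z\in(n-2)K$ corresponds exactly to the paper's grouping $a=k_1+k_2$, $z=k_3+\dots+k_n$, and your key step that $a+s_2=\F(S)-x_i+s_2\in K$ (via $x_i-s_2\notin S$ by minimality of $x_i$) is the same as the paper's observation that $k_1+k_2+s_2\in K$ because $s_1+k_3+\dots+k_n\notin S$, since these are the same element. No substantive difference; both proofs then conclude $\F(S)-s_1=(a+s_2)+z\in(n-1)K$ without induction.
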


\begin{proof}
Let $\F(S)-(s_1+s_2)=k_1 + \dots + k_n \in nK \setminus (n-1)K$ with $k_i \in K$ for $1 \leq i \leq n$. Since $\F(S)-(s_1+s_2) \notin (n-1)K$, we have $\F(S) \neq k_1+k_2 \in 2K \setminus K$ and, then, $\F(S)-(k_1+k_2)$ is a minimal generator of $S$. Since $\F(S)-(k_1+k_2)=s_1+s_2+k_3+ \dots + k_{n}$, this implies that $s_1+k_3 + \dots + k_{n}\notin S$, that is $k_1+k_2+s_2 =\F(S)-(s_1+k_3+\dots + k_{n}) \in K$. Therefore, $\F(S)-s_1=(k_1+k_2+s_2)+k_3+\dots+ k_{n} \in (n-1)K$ and the thesis follows.
\end{proof}

\begin{theorem} \label{Livelli più alti}
Let $S$ be a {\rm GAS} numerical semigroup that is not symmetric. Then, $\langle K \rangle \setminus K=\{\F(S)-x_1, \dots, \F(S)-x_r, \F(S)\}$ for some minimal generators $x_1, \dots, x_r$ with $r \geq 0$ and $x_i-x_j \notin \PF(S)$ for every $i$ and $j$.
\end{theorem}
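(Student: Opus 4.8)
The plan is to analyze the elements of $\langle K \rangle \setminus K = \bigcup_{n \geq 1}\big((nK) \setminus (n-1)K\big)$ one ``level'' at a time, using induction on $n$, with the GAS hypothesis controlling level $2$ and Lemma \ref{Lemma livelli più alti} propagating the control upward. First I would record the base case: by definition of GAS (with $S$ not symmetric), $2K \setminus K = \{\F(S)-x_1, \dots, \F(S)-x_r, \F(S)\}$ with the $x_i$ minimal generators satisfying $x_i - x_j \notin \PF(S)$, so the claim holds for $\langle K\rangle$ replaced by $2K$. The inductive step is to show that if every element of $(n-1)K \setminus K$ has the form $\F(S)-x$ for a minimal generator $x$ of $S$, then the same is true of every element of $(nK) \setminus (n-1)K$ for $n > 2$; since $\langle K \rangle = \bigcup_n nK$ and this union stabilizes (the $nK$ form an ascending chain of relative ideals inside $\langle K \rangle$, which is a numerical semigroup), this yields the description of $\langle K \rangle \setminus K$.

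For the inductive step, take $\alpha \in (nK) \setminus (n-1)K$ with $n > 2$. Writing $\alpha = k_1 + \cdots + k_n$ with $k_i \in K$, the partial sum $k_1 + k_2$ lies in $2K$; I would first argue $k_1 + k_2 \notin K$ (otherwise $\alpha \in (n-1)K$), so $\F(S) \neq k_1 + k_2 \in 2K \setminus K$ forces $\F(S) - (k_1+k_2)$ to be a minimal generator, hence $k_1 + k_2 \in \mathbb{N}$ and in particular $\F(S) - (k_1 + k_2) > 0$, so $\F(S) - (k_1+k_2) \in S$. In any case $\alpha \notin K$, so $x := \F(S) - \alpha \in S$; I need to show $x$ is a minimal generator and that $x$ does not differ from any of the earlier generators by a pseudo-Frobenius number. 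For the first part: suppose $x = s_1 + s_2$ with $s_1, s_2 \in M$. By Lemma \ref{Lemma livelli più alti}, $\F(S) - s_1 \in (n-1)K$, and likewise (by symmetry of the argument) $\F(S) - s_2 \in (n-1)K$; moreover $\F(S)-s_1 \notin K$ since $s_1 \in S$, so $\F(S)-s_1 \in (n-1)K \setminus K$, and similarly for $s_2$. By the inductive hypothesis $s_1$ and $s_2$ are themselves minimal generators of $S$. Now I want a contradiction: $x = s_1 + s_2$ with both summands minimal generators, and separately $\F(S)-x = \alpha \in nK$ with $n > 2$. Here I would push the decomposition one more step, using that $\F(S) - s_1$ and $\F(S) - s_2$ are in $(n-1)K \setminus K$ and that adding $s_2$ (resp.\ $s_1$) to get back $\alpha = \F(S) - x$ relates these levels; the key point to exploit is that $\F(S) - s_1$ minimal generator of $S$ being subtracted and $\alpha$ sitting strictly above level $n-1$ is incompatible with the GAS constraint $x_i - x_j \notin \PF(S)$ applied to the pair coming from $s_1, s_2$.

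Concretely, for the ``no pseudo-Frobenius difference'' part and the minimal-generator part together, I would argue as follows. Suppose $x$ is \emph{not} a minimal generator, $x = s_1 + s_2$, $s_i \in M$. As just shown, $s_1, s_2$ are minimal generators and $\F(S)-s_1, \F(S)-s_2 \in (n-1)K\setminus K \subseteq \langle K\rangle$. Write $\F(S) - s_1 = k_1' + \cdots + k_{n-1}'$; then $\alpha = \F(S) - s_1 - s_2 = (k_1' + \cdots + k_{n-1}') - s_2$ is not obviously of the right form, so instead I track that $s_2 + (\text{the level-}2\text{ piece of } \F(S)-s_1)$ lands back in $K$, exactly as in the proof of Lemma \ref{Lemma livelli più alti}, giving $\F(S) - s_1 \in (n-2)K$ once we ``absorb'' $s_2$ — but $\F(S) - s_1 \notin (n-2)K$ would contradict minimality of its level unless $n - 2 \geq$ its level, and iterating drives the level of $\F(S)-s_1$ down to $2$, whence $s_1 = x_i$ for some $i$ and symmetrically $s_2 = x_j$; but then $x = x_i + x_j$, and I claim this contradicts $x_i - x_j \notin \PF(S)$ is \emph{not} quite the statement needed — rather, one shows $\F(S) - x_i = (\F(S) - x_i - x_j) + x_j$ forces, via $2K\setminus K$ being exactly $\{\F(S)-x_\ell, \F(S)\}$, that $x_j \in \PF(S)$ or $x_i - $ something $\in \PF(S)$. \textbf{This bookkeeping — showing $x = s_1 + s_2$ with $s_i$ minimal generators cannot happen, by repeatedly applying Lemma \ref{Lemma livelli più alti} to collapse levels and then invoking the GAS condition $x_i - x_j \notin \PF(S)$ — is the main obstacle}, and it is where I expect the real work to lie; the ascending-chain/stabilization remarks and the membership bookkeeping ($\F(S)-s \in S \Leftrightarrow \F(S)-s \notin K$, etc.) are routine. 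Once $x$ is known to be a minimal generator not of the form previously excluded, the statement $x_i - x_j \notin \PF(S)$ for the enlarged list follows because any such bad difference would, via the relative-ideal property of $2K$ and $3K$, produce an element of $2K\setminus K$ outside the prescribed set, contradicting GAS.
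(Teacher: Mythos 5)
Your setup coincides with the paper's second induction: reduce to showing that if $\F(S)-x \in hK \setminus (h-1)K$ with $h \geq 3$ then $x$ is a minimal generator, and use Lemma \ref{Lemma livelli più alti} together with the induction hypothesis to conclude that in a putative decomposition $x=s_1+s_2$ both summands are minimal generators. But at exactly the point where a contradiction must be produced, your argument stops: you yourself flag this step as ``the main obstacle,'' and the route you sketch does not work. Lemma \ref{Lemma livelli più alti} only gives $\F(S)-s_1 \in (h-1)K$; it provides no mechanism for repeatedly ``absorbing'' $s_2$ to drive the level of $\F(S)-s_1$ down to $2$, and even if one could reach $s_1=x_i$, $s_2=x_j$ with both in the level-two list, the equality $x=x_i+x_j$ contradicts nothing in the GAS definition (the hypothesis is $x_i-x_j \notin \PF(S)$, not that sums of the $x_\ell$ avoid $\langle K\rangle \setminus K$). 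So the heart of the theorem is missing.

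There is also a structural gap: you treat the claim $x_i-x_j \notin \PF(S)$ for the enlarged list as a one-sentence afterthought, but in the paper this is a separate, nontrivial induction, proved \emph{first} and for arbitrary $x,y$ with $\F(S)-x,\F(S)-y \in \langle K\rangle \setminus K$ (with no minimal-generator assumption), precisely because it is the tool that closes the minimal-generator induction. Concretely, in the paper's inductive step one picks $1<i<h$ with $\F(S)-s_1 \in iK\setminus(i-1)K$, observes that $k_1+\cdots+k_i \in iK\setminus(i-1)K$ (else $\F(S)-x \in (h-1)K$), so by induction $\F(S)-(k_1+\cdots+k_i)$ is a minimal generator, and by the already-established pseudo-Frobenius statement $\F(S)-(k_1+\cdots+k_i)-s_1 \notin \PF(S)$; this produces $s \in M$ with $k_1+\cdots+k_i+s_1-s \in K$, whence $\F(S)-(s_2+s) \in (h-i+1)K$ with $h-i+1<h$, contradicting the induction hypothesis because $s_2+s$ is not a minimal generator. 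That first induction in turn uses the GAS condition in a specific way (e.g.\ showing $k_1+k_2+(x_1-x_2)$ must remain in $K$ because two elements of $2K\setminus K$ cannot differ by a pseudo-Frobenius number). Without proving the pseudo-Frobenius property at all levels beforehand, neither your minimal-generator step nor your closing claim about differences can be completed, so the proposal as it stands does not prove the theorem.
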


\begin{proof}
We first prove that $x_i-x_j \notin \PF(S)$ for every $i$ and $j$ without assuming that $x_i$ and $x_j$ are minimal generators. We can suppose that $x_i=x_1$ and $x_j=x_2$.

Let $\F(S)-x_1=k_1+\dots +k_n \in nK \setminus (n-1)K$ with $k_i \in K$ for every $i$ and assume by contradiction that $x_1-x_2 \in \PF(S)$. 
We note that $\F(S)-x_2=k_1+\dots +k_n + (x_1-x_2)$ and $k_1+(x_1-x_2) \in K$. Indeed, if $\F(S)-k_1 -(x_1-x_2)=s \in S$, then $s \neq 0$ and $\F(S)-k_1=(x_1-x_2)+s \in S$ yields a contradiction. If $k_1+k_2+(x_1-x_2) \notin K$, then it is in $2K \setminus K$ and, since also $k_1+k_2 \in 2K \setminus K$, we get a contradiction because their difference is a pseudo-Frobenius number. Hence, $k_1+k_2+(x_1-x_2) \in K$.

We proceed by induction on $n$. If $n=2$, it follows that $\F(S)-x_2=k_1+k_2+(x_1-x_2) \in K$, that is a contradiction. So, let $n \geq 3$ and let $i$ be the minimum index for which $k_1+ \dots + k_i + (x_1-x_2) \notin K$. It follows that $k_1+ \dots + k_i + (x_1-x_2) \in 2K\setminus K$ and, since also $k_1+k_2 \in 2K \setminus K$, this implies that $k_3+ \dots + k_i +(x_1-x_2) \notin \PF(S)$. Moreover, it cannot be in $S$, because it is the difference of two minimal generators, since $S$ is GAS. Therefore, there exists $m \in M$ such that $k_3+ \dots + k_i +(x_1-x_2)+ m \notin S$, that means $\F(S)-(k_3+ \dots + k_i +(x_1-x_2)+m)=k' \in K$. Thus, $\F(S)-((x_1-x_2)+m)=k'+k_3+ \dots + k_i \in jK \setminus K$ for some $1< j < n$. Moreover, $\F(S)-m=k'+k_3+ \dots + k_i +(x_1-x_2)\in \langle K \rangle \setminus K$ and by induction $(x_1-x_2)+m-m \notin \PF(S)$, that is a contradiction. Hence, $x_1-x_2 \notin \PF(S)$.

Let now $h \geq 3$. To prove the theorem it is enough to show that, if $\F(S)-x \in hK \setminus (h-1)K$, then $x$ is a minimal generators of $S$. We proceed by induction on $h$.
Using the GAS hypothesis, the case $h=3$ is very similar to the general case, so we omit it (the difference is that also $\F(S) \in 2K \setminus K$). Suppose by contradiction that $x=s_1+s_2$ and $\F(S)-(s_1+s_2)=k_1+ \dots +k_h \in hK \setminus (h-1)K$ with $k_1, \dots, k_h \in K$ and $s_1, s_2 \in M$. 
Clearly, $\F(S)-s_1 \notin K$ and by Lemma \ref{Lemma livelli più alti} we have $\F(S)-s_1 \in (h-1)K$; in particular, $s_1$ is a minimal generator of $S$ by induction. Let $1< i < h$ be such that $\F(S)-s_1 \in iK \setminus (i-1)K$. Since $\F(S)-(s_1+s_2) \notin (h-1)K$, we have $k_1+ \dots + k_i \in iK \setminus (i-1)K$ and, by induction, $\F(S)-(k_1+\dots+k_i)$ is a minimal generator of $S$ and $\F(S)-(k_1+\dots+k_i)-s_1 \notin \PF(S)$ by the first part of the proof. This means that there exists $s \in M$ such that $\F(S)-(k_1+\dots+k_i)-s_1+s \notin S$, i.e. $k_1+\dots+k_i+s_1-s \in K$. This implies that $\F(S)-(s_2+s)=(k_1 + \dots + k_i +s_1-s)+k_{i+1}+\dots+k_h \in (h-i+1)K$ and, since $h-i+1 <h$, the induction hypothesis yields a contradiction because $s_2+s$ is not a minimal generator of $S$.
\end{proof}

We recall that in an almost symmetric numerical semigroup $\F(S)-f \in \PF(S)$ for every $f \in \PF(S)\setminus \{\F(S)\}$, see \cite[Theorem 2.4]{N}. The following proposition generalizes this fact.

\begin{proposition} \label{PF GAS}
Let $S$ be a numerical semigroup with $2K \setminus K=\{\F(S)-x_1, \dots, \F(S)-x_r, \F(S)\}$, where $x_i$ is a minimal generator of $S$ for every $i$. 
\begin{enumerate}
	\item For every $i$, there exist $f_j, f_k \in \PF(S)$ such that $f_j+f_k=\F(S)+x_i$.
	\item For every $f \in \PF(S)\setminus \{\F(S)\}$, it holds either $\F(S)-f \in \PF(S)$ or $\F(S)-f+x_i \in \PF(S)$ for some $i$.
\end{enumerate}
\end{proposition}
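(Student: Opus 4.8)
I would phrase everything through the set $K\setminus(M+K)$ of ``minimal generators'' of $K$, which is in bijection with $\PF(S)$ via $k\mapsto\F(S)-k$: for any integer $a$ one has $a\in\PF(S)$ exactly when $\F(S)-a\in K$ and $\F(S)-a-m\notin K$ for every $m\in M$, and this is immediate from $K=\{z\mid\F(S)-z\notin S\}$ and $\PF(S)=(S-M)\setminus S$. Two elementary facts will be used throughout. First, writing any element of $K$ as a $\leq_S$-minimal element of $K$ plus an element of $S$ (equivalently: every gap is $\leq_S$ some pseudo-Frobenius number) gives $K=(K\setminus(M+K))+S$, hence $2K=(K\setminus(M+K))+(K\setminus(M+K))+S$. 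Second, if $f\in\PF(S)\setminus\{\F(S)\}$ then $\F(S)-f\in K\setminus(M+K)$ and $\F(S)-f\neq 0$, so $\F(S)-f\notin S$ (otherwise $\F(S)-f=(\F(S)-f)+0\in M+K$); that is, $\F(S)-f$ is a gap.

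\textbf{Part (1).} Since $\F(S)-x_i\in 2K$, I would write $\F(S)-x_i=p_1+p_2+s$ with $p_1,p_2\in K\setminus(M+K)$ and $s\in S$. The crucial point is that $s=0$. If $s\neq 0$, then $\F(S)-(p_1+p_2)=x_i+s\in S$, so $p_1+p_2\in 2K\setminus K$; by hypothesis $\F(S)-(p_1+p_2)$ is then either $0$ or one of the $x_j$, but $x_i+s>0$ and $x_i+s\in M+M$ is not a minimal generator, a contradiction. Hence $\F(S)-x_i=p_1+p_2$, and $f_j:=\F(S)-p_1$ and $f_k:=\F(S)-p_2$ lie in $\PF(S)$ by the bijection, with $f_j+f_k=2\F(S)-(p_1+p_2)=\F(S)+x_i$.

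\textbf{Part (2).} Let $f\in\PF(S)\setminus\{\F(S)\}$ and set $g:=\F(S)-f$, a gap in $K\setminus(M+K)$. If $g\in\PF(S)$ the first alternative holds. Otherwise $g$ is a non-pseudo-Frobenius gap, so $A:=\{m\in M\mid g+m\text{ is a gap}\}=\{m\in M\mid f-m\in K\}$ is nonempty and finite; put $y:=\max A$. Then $y$ is a minimal generator (if $y=m_1+m_2$ with $m_i\in M$ then $f-m_1=(f-y)+m_2\in K$, so $m_1\in A$, against maximality) and $f-y\in K\setminus(M+K)$ (if $f-y=m+k'$ with $m\in M$, $k'\in K$, then $f-(y+m)=k'\in K$, so $y+m\in A$, again against maximality). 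Now $\F(S)-y=g+(f-y)\in K+K=2K$, and $y\in S$ forces $\F(S)-y\notin K$; since $y>0$, the hypothesis gives $y=x_i$ for some $i$. Finally $\F(S)-(g+y)=f-y\in K\setminus(M+K)$, so $g+y=\F(S)-f+x_i\in\PF(S)$ by the bijection, which is the second alternative.

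\textbf{Main obstacle.} The genuinely non-obvious steps are: in (1), seeing that the ``$S$-part'' $s$ of a decomposition $\F(S)-x_i=p_1+p_2+s$ must vanish, the mechanism being that a nonzero $s$ manufactures an element $p_1+p_2$ of $2K\setminus K$ whose partner $x_i+s$ would have to be a minimal generator while visibly sitting in $M+M$; and in (2), producing a single minimal generator $y$ that is simultaneously one of the $x_i$ and satisfies $\F(S)-f+y\in\PF(S)$, which is exactly what the choice $y=\max A$ delivers. The remaining bookkeeping — the decomposition $K=(K\setminus(M+K))+S$, and keeping track of nonnegativity when passing between $K$ and $\PF(S)$ through $a\mapsto\F(S)-a$ — is routine.
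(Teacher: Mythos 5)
Your argument is correct in substance and, once unwrapped, follows essentially the paper's route: in (1) the decisive point in both proofs is that an element of $2K\setminus K$ of the form $\F(S)-(x_i+s)$ with $s\in M$ is impossible, because $x_i+s$ is neither $0$ nor a minimal generator; in (2) both proofs produce an $s\in M$ with $\F(S)-f+s\in\PF(S)$ and then observe that $\F(S)-s=(\F(S)-f)+(f-s)\in 2K\setminus K$, forcing $s=x_i$. Your packaging through $K\setminus(M+K)$, the bijection $k\mapsto\F(S)-k$ with $\PF(S)$, and the decomposition $K=(K\setminus(M+K))+S$ is a valid alternative bookkeeping; the paper instead takes an arbitrary decomposition $\F(S)-x_i=k_1+k_2$ and shows $k_1-s,k_2-s\notin K$ for every $s\in M$, which gives the slightly stronger fact that \emph{every} such decomposition yields pseudo-Frobenius numbers, whereas you only need (and prove) existence.

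One step of yours is wrong as written, though harmlessly so: in (2), the parenthetical argument that $y=\max A$ is a minimal generator does not work, since from $y=m_1+m_2$ you only obtain $m_1\in A$ with $m_1<y$, which does not contradict the maximality of $y$. Fortunately you never use this claim: $y=x_i$ (hence the fact that $y$ is a minimal generator) already follows from $\F(S)-y\in 2K\setminus K$, $y>0$ and the hypothesis, and the step you actually need, $f-y\in K\setminus(M+K)$, is proved correctly, because there the element $y+m$ you produce is strictly larger than $y$. Delete that parenthesis and the proof stands.
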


\begin{proof}
Let $\F(S)-x_i=k_1+k_2 \in 2K\setminus K$ for some $k_1, k_2 \in K$ and let $s \in M$. 
Since $x_i+s \in S$, we have $\F(S)-x_i-s \notin K$ and then $\F(S)-x_i-s=k_1+k_2-s \notin 2K$ because $x_i+s$ is not a generator of $S$. In particular, $k_1-s$ and $k_2-s$ are not in $K$. This means that $\F(S)-k_1+s$ and $\F(S)-k_2+s$ are in $S$ and, thus, $\F(S)-k_1, \F(S)-k_2 \in \PF(S)$. Moreover, $\F(S)-k_1+\F(S)-k_2=2\F(S)-(\F(S)-x_i)=\F(S)+x_i$ and (1) holds.

Let now $f \in \PF(S) \setminus \{\F(S)\}$ and assume that $\F(S)-f \notin \PF(S)$. Then, there exists $s \in M$ such that $\F(S)-f+s \in \PF(S)$. In particular, $f-s \in K$ and $\F(S)-s=(\F(S)-f)+(f-s) \in 2K \setminus K$; thus, $s$ has to be equal to $x_i$ for some $i$ and $\F(S)-f+x_i \in \PF(S)$.
\end{proof}

\begin{examples} \rm \label{Examples}
{\bf 1.} Let $S=\langle 28,40,63,79,88\rangle$. We have $2K \setminus K=\{281-28,281\}$ and $S$ is 2-AGL. In this case $\PF(S)=\{100,132,177,209,281\}$ and $100+209=132+177=281+28$. \\
{\bf 2.} Consider $S= \langle 67, 69, 76, 78, 86 \rangle$. Here $2K \setminus K=\{485-86,485\}$ and the semigroup is 2-AGL. Moreover, $\PF(S)=\{218, 226, 249, 259, 267, 322, 485	\}$, $218+267=226+259=485$ and $249+322=485+86$. \\
{\bf 3.} If $S=\langle 9,10,12,13 \rangle$, then $2K \setminus K=\{17-13,17-12,17-10,17-9,17\}$ and $\PF(S)=\{11,14,15,16,17\}$. Hence, $S$ is GAS and, according to the previous proposition, we have
\begin{align*}
\F(S)+9&=11+15   &\F(S)+12=14+15&\\
\F(S)+10&=11+16  &\F(S)+13=14+16&.
\end{align*}
{\bf 4.} Conditions (1) and (2) in Proposition \ref{PF GAS} do not imply that every $x_i$ is a minimal generator. For instance, if we consider the numerical semigroup $S=\{15,16,19,20,24\}$, we have $2K \setminus K=\{42-40,42-36,42-32,42-24,42-20,42-19,42-16,42-15,42\}$ and $\PF(S)=\{28,29,33,37,41,42\}$. Moreover,
\begin{align*}
\F(S)+40&=41+41  &\F(S)+20=29+33 \\
\F(S)+36&=37+41  &\F(S)+19=28+33 \\
\F(S)+32&=37+37  &\F(S)+16=29+29 \\
\F(S)+24&=33+33  &\F(S)+15=28+29
\end{align*}
and, so, it is straightforward to see that the conditions in Proposition \ref{PF GAS} hold, but $32$, $36$ and $40$ are not minimal generators.
\end{examples}

We recall that $\L(S)$ denotes the set of the gaps of the second type of $S$, i.e. the integers $x$ such that $x \notin S$ and $\F(S)-x \notin S$, i.e. $x \in K \setminus S$, and that $S$ is almost symmetric if and only if $\L(S) \subseteq \PF(S)$, see \cite{BF}.

\begin{lemma} \label{Lemma L(S)}
Let $S$ be a numerical semigroup with $2K \setminus K=\{\F(S)-x_1, \dots, \F(S)-x_r,\F(S)\}$, where $x_i$ is a minimal generator of $S$ for every $i$. If $x \in \L(S)$ and $\F(S)-x \notin \PF(S)$, then both $x$ and $\F(S)-x+x_i$ are pseudo-Frobenius numbers of $S$ for some $i$.
\end{lemma}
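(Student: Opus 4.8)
The plan is to leverage Proposition \ref{PF GAS}(2) as the starting point and then upgrade the conclusion ``$\F(S)-x+x_i\in\PF(S)$'' to also include ``$x\in\PF(S)$''. So suppose $x\in\L(S)=K\setminus S$ with $\F(S)-x\notin\PF(S)$. By Proposition \ref{PF GAS}(2), there is an index $i$ such that $f:=\F(S)-x+x_i\in\PF(S)$; unwinding how that index was produced in the proof of \ref{PF GAS}(2), we actually have $x-x_i\in K$ (equivalently $\F(S)-(x-x_i)\notin S$) and $\F(S)-x_i=(\F(S)-x)+(x-x_i)\in 2K\setminus K$. It remains to show $x\in\PF(S)$, i.e. $x+m\in S$ for every $m\in M$ (we already know $x\notin S$ since $x\in\L(S)$).

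First I would record the auxiliary fact that $\F(S)-x+x_i\in\PF(S)$ together with $x\in\L(S)$ forces $x_i\notin S$-style constraints; more usefully, since $f=\F(S)-x+x_i\in\PF(S)\subseteq S-M$, for each $m\in M$ we get $f+m\in S$, that is $\F(S)-x+x_i+m\in S$. The goal $x+m\in S$ would follow if I can ``remove'' the $x_i$ and the extra shift. Here is where I would use the GAS/2-AGL structure: suppose for contradiction that $x+m\notin S$ for some $m\in M$. Then $\F(S)-x-m\in K$ (by definition of $K$, since $x+m\notin S$ means $\F(S)-(x+m)$ could fail to be a gap, so I must be careful) — actually the cleaner route is: $x+m\notin S$ and also $\F(S)-(x+m)$; combine with $\F(S)-x_i\in 2K\setminus K$ written as $(\F(S)-x)+(x-x_i)$. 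The idea is that $x+m\notin S$ would let me write $\F(S)-x_i-$something as a sum of two elements of $K$ in a way that produces a \emph{second} element of $2K\setminus K$ of the form $\F(S)-(\text{sum involving }m)$, and because $x_i$ is a minimal generator with $x_i$ and the new generator differing by a pseudo-Frobenius number, I would contradict the GAS hypothesis (or the hypothesis $2K\setminus K=\{\F(S)-x_1,\dots,\F(S)-x_r,\F(S)\}$ directly, since a new element of $2K\setminus K$ not of the listed form cannot appear).

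More concretely, from $(\F(S)-x)+(x-x_i)=\F(S)-x_i\in 2K\setminus K$ with both summands in $K$, and from $\F(S)-x\notin\PF(S)$ (so some $m_0\in M$ has $\F(S)-x+m_0\in S$... wait, that contradicts $\F(S)-x\notin S$ only if $m_0$... ), I would instead argue: since $x\in K\setminus S$, for the contradiction hypothesis $x+m\notin S$, I get $\F(S)-x\in S-M$ would be false in general, so I track $k_1:=\F(S)-x$, $k_2:=x-x_i$, both in $K$, $k_1+k_2\in 2K\setminus K$. If $x+m\notin S$ then $\F(S)-x-m$ need not be in $K$; but $k_1-m=\F(S)-x-m$, and if this were in $K$ we'd have $\F(S)-m\in S$ forcing nothing bad — so the real lever is $k_1-m\notin K$ for all $m$, i.e. $\F(S)-k_1=x\in\PF(S)$, which is exactly what we want. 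So the contrapositive form of the argument is: $x\notin\PF(S)$ $\Rightarrow$ $k_1-m\in K$ for some $m\in M$ $\Rightarrow$ $\F(S)-m=(k_1-m)+k_2+x_i$... and here I would derive that $\F(S)-m\in\langle K\rangle$, and chase down via Theorem \ref{Livelli più alti} that $m$ is a minimal generator with $m-x_i\in\PF(S)$ or $m=x_j$, contradicting either the GAS condition $x_i-x_j\notin\PF(S)$ or the assumption that $\F(S)-x\notin\PF(S)$.

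The main obstacle I expect is the bookkeeping in that last contradiction: keeping straight which sums land in $K$, in $2K\setminus K$, or in higher $\langle K\rangle$-levels, and invoking the right piece of Theorem \ref{Livelli più alti} (that every element of $\langle K\rangle\setminus K$ is $\F(S)-(\text{minimal generator})$ and these generators are pairwise non-$\PF$-related) to close the loop. The structure of the argument closely mirrors the proof of Proposition \ref{PF GAS}(2), just carried one step further, so I would model the calculation on that proof and on Lemma \ref{Lemma livelli più alti}.
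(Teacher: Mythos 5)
Your proposal has a genuine gap in the second half, where you try to extract a contradiction from $x\notin\PF(S)$. First, a formal point about the first half: Proposition \ref{PF GAS}(2) has the hypothesis $f\in\PF(S)\setminus\{\F(S)\}$, so you cannot cite it for $f=x$ before knowing $x\in\PF(S)$; what saves you is that, as you note by ``unwinding'' its proof, that argument only uses $x\notin S$, $\F(S)-x\notin S$ and $\F(S)-x\notin\PF(S)$, and it does produce an $x_i$ with $\F(S)-x+x_i\in\PF(S)$ and $x-x_i\in K$. This is fine in substance, but it must be presented as rerunning the argument, not as an application of the stated proposition. (Also, your hedging is misplaced: $x+m\notin S$ forces $x+m\le\F(S)$ and hence $\F(S)-x-m\in K$; there is no case distinction to worry about.)

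The real problem is the closing step. From $k_1-m=\F(S)-x-m\in K$ and $k_2=x-x_i\in K$ you form $\F(S)-m=(k_1-m)+k_2+x_i$, which only shows $\F(S)-m\in 2K\setminus K$ and hence that $m$ is one of the $x_j$ --- and that is not a contradiction. Your proposed ways of closing (``$m-x_i\in\PF(S)$'', invoking Theorem \ref{Livelli più alti}, or the condition $x_i-x_j\notin\PF(S)$) are unjustified as stated; moreover the last two rely on the full GAS hypothesis, which the lemma does not assume (it only assumes the description of $2K\setminus K$), so even if they worked you would be proving a weaker statement. You never actually produce the ``forbidden'' element of $2K\setminus K$ your plan promises. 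The fix is simply not to add $x_i$ back: $(k_1-m)+k_2=\F(S)-(m+x_i)\in 2K\setminus K$, and $m+x_i\in 2M$ is nonzero and not a minimal generator, contradicting the hypothesis directly. This is essentially the paper's proof, except the paper avoids $x_i$ altogether at this stage: since $\F(S)-x\notin S\cup\PF(S)$ there is $t\in M$ with $x-t\in K$, and then $\F(S)-(s+t)=(\F(S)-x-s)+(x-t)\in 2K\setminus K$ with $s+t\in 2M$ gives the contradiction; having established $x\in\PF(S)$, it then applies Proposition \ref{PF GAS}(2) legitimately to get $\F(S)-x+x_i\in\PF(S)$.
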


\begin{proof}
Assume by contradiction that $x \notin \PF(S)$. Therefore, there exists $s \in M$ such that $x+s \notin S$ and, then, $\F(S)-x-s \in K$. Moreover, since $\F(S)-x \notin \PF(S)$, there exists $t \in M$ such that $\F(S)-x+t \notin S$ and then $x-t \in K$. Consequently, $\F(S)-s-t=(\F(S)-x-s)+(x-t) \in 2K$ and $\F(S)-s-t\notin K$, since $s+t \in S$. This is a contradiction, because $s+t$ is not a minimal generator of $S$. Hence, $x \in \PF(S)$ and, since $\F(S)-x \notin \PF(S)$, Proposition \ref{PF GAS} implies that $\F(S)-x+x_i \in \PF(S)$ for some $i$.
\end{proof}

\begin{lemma} \label{difference} As ideal of $M-M$, it holds $\widetilde{M-e}=M-e$ and
\[
K(M-M) \setminus (M-e) =\{x-e \mid x \in \L(S) \text{ and } \F(S) - x \notin \PF(S)\}.
\]
\end{lemma}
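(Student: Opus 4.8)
The statement has two parts. First I would establish that $\widetilde{M-e}=M-e$ as a relative ideal of $T:=M-M$. The point is that $T$ is a numerical semigroup whose Frobenius number is $\F(T)=\F(S)-e$ (since $M-M = S \cup \PF(S)$ and the largest gap of $S$ not belonging to $\PF(S)$ governs this — more precisely one checks $\F(M-M)=\F(S)-e$ directly from the fact that $x\in M-M$ iff $x+M\subseteq M$, together with $\F(S)\notin M-M$ while $\F(S)-e\notin M-M$ and every integer $>\F(S)-e$ that is not $\F(S)-e$ lies in $T$... I would verify this cleanly using $M-M=S\cup\PF(S)$ and the observation that $\F(S)\in\PF(S)$ is impossible). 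Granting $\F(T)=\F(S)-e$, since $M-e$ is a relative ideal of $T$ whose largest excluded integer is $\F(M-e)=\F(M)-e=\F(S)-e=\F(T)$, the normalization $\widetilde{\phantom{x}}$ (defined relative to $T$) fixes it, so $\widetilde{M-e}=M-e$.

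For the second and main part, I would compute $K(T)$ where $K(T)=\{x\in\mathbb N\mid \F(T)-x\notin T\}=\{x\mid \F(S)-e-x\notin M-M\}$. Using $M-M=S\cup\PF(S)$, membership $\F(S)-e-x\notin S\cup\PF(S)$ needs to be translated into a condition on $x$; equivalently, shifting by $e$, I would describe $K(T)$ in terms of the gaps of $S$ of the second type (the set $\L(S)$) and the pseudo-Frobenius numbers. The cleanest route is probably: $y\in K(T)+e$ iff $\F(S)-y\notin M-M$ iff ($\F(S)-y\notin S$ and $\F(S)-y\notin\PF(S)$). The condition $\F(S)-y\notin S$ splits according to whether $y\in S$ or $y\in\L(S)\cup\{\F(S)\}\cup(\text{negatives})$; I expect the relevant regime to be exactly $y\in\L(S)\cup\{\F(S)\}$ together with the extra constraint $\F(S)-y\notin\PF(S)$, and then subtract off $M-e$ (equivalently, $y\in M$, i.e. $y\in S\setminus\{0\}$) to isolate $K(T)\setminus(M-e)$. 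Since $M-e\subseteq K(T)$ follows from $\widetilde{M-e}=M-e\subseteq K(T)$ (any relative ideal with Frobenius number $\F(T)$ that is contained in $\mathbb N$ sits inside $K(T)$ — here one checks $M-e\subseteq\mathbb N$, which holds since $e$ is the multiplicity), the difference set is well-defined, and I claim it equals $\{x-e\mid x\in\L(S),\ \F(S)-x\notin\PF(S)\}$.

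The key step, and the main obstacle, is pinning down $K(T)$ exactly. The subtlety is that $\F(S)-y\notin S$ is \emph{not} the same as $y\in\L(S)$: it also allows $y=\F(S)$ and $y\notin\mathbb N$ or $y>\F(S)$, but once we intersect with $\mathbb N$ and use that $y\in K(T)+e$ forces $y\ge e$ (since $x\ge 0$), and handle the boundary value $y=\F(S)$ separately (it contributes $\F(S)-e$, which lies in $M-e$ precisely when $\F(S)\in M$, i.e. always when $\F(S)>0$ — so it is \emph{not} in the difference set, consistent with the right-hand side since $\F(S)-\F(S)=0\notin\PF(S)$ would give $\F(S)\in\L(S)$, false, as $\F(S)\notin K\setminus S$ because $\F(S)\notin K$). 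I would therefore organize the argument as a case analysis on where $y$ falls relative to $S$, $\L(S)$, and $\{\F(S)\}$, using Remark \ref{Rem as}.3 ($K-M=K\cup\{\F(S)\}$) and the characterization $\PF(S)=(S-M)\setminus S$ to rule out the spurious cases, and finally use Lemma \ref{Lemma L(S)} / Proposition \ref{PF GAS} only if needed to double-check consistency (though I suspect the present lemma does not actually need the GAS hypothesis and follows purely from $M-M=S\cup\PF(S)$ and the defining formula for $K(T)$). Once the case analysis is done, translating $y$ to $x=y$ (the gap of $S$) and shifting indices by $e$ yields the claimed equality.
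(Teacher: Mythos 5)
Your overall route coincides with the paper's: compute $\F(M-M)=\F(S)-e=\F(M-e)$ to conclude $\widetilde{M-e}=M-e$, then unfold the definition of $K(M-M)$ using $M-M=S\cup\PF(S)$ and remove $M-e$ to land on $\{x-e\mid x\in\L(S),\ \F(S)-x\notin\PF(S)\}$; and you are right that no GAS hypothesis is needed. However, the plan contains concrete false claims that would derail the verification as written. You assert ``$\F(S)\notin M-M$'' and propose to use ``the observation that $\F(S)\in\PF(S)$ is impossible''; both are wrong: $\F(S)+m\in S$ for every $m\in M$, so $\F(S)$ is always a pseudo-Frobenius number and hence always lies in $M-M=S\cup\PF(S)$. (This even contradicts your own correct remark that every integer greater than $\F(S)-e$ lies in $M-M$.) The facts you actually need, and do also state, are that $\F(S)-e\notin M-M$ (because $(\F(S)-e)+e=\F(S)\notin M$) and that $y+m\geq y+e>\F(S)$ for all $y>\F(S)-e$ and $m\in M$; the two false observations should simply be discarded, as they play no role in the correct argument.

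Your treatment of the boundary value $y=\F(S)$ is also wrong in its justification: you claim the corresponding element $\F(S)-e$ is excluded from the difference set because it lies in $M-e$, ``since $\F(S)\in M$'' --- but $\F(S)\notin S$, so $\F(S)-e\notin M-e$. The correct reason is that $(\F(S)-e)-(\F(S)-e)=0\in M-M$, so $\F(S)-e\notin K(M-M)$ in the first place; equivalently, in your parametrization $y=\F(S)$ already fails the condition $\F(S)-y\notin S$, since $\F(S)-y=0\in S$, so this case never enters the ``relevant regime'' at all. Finally, for the inclusion of the right-hand side into $K(M-M)\setminus(M-e)$ you still need $x-e\in\mathbb{N}$, i.e.\ $x>e$, for every $x\in\L(S)$ with $\F(S)-x\notin\PF(S)$: this holds because a gap $x$ with $0<x<e$ satisfies $\F(S)-x+m>\F(S)$ for all $m\in M$, forcing $\F(S)-x\in S\cup\PF(S)$, and $x=e$ is not a gap. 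This short check (which replaces your erroneous boundary discussion) closes the case analysis and brings your argument in line with the paper's proof.
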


\begin{proof}
We notice that $\F(S)-e \notin (M-M)$ and, if $y > \F(S)-e$ and $m \in M$, we have $y+m >\F(S)-e+m \geq \F(S)$. Therefore, $\F(M-M)=\F(S)-e=\F(M-e)$ and, then, $\widetilde{M-e}=M-e$.

We have $x-e \in K(M-M) \setminus (M-e)$ if and only if $x\notin M$ and $(\F(S)-e)-(x-e) \notin (M-M)$ that is in turn equivalent to $x \notin M$ and $\F(S)-x \notin S \cup \PF(S)$. Since $x \neq 0$, this means that $x \in \L(S)$ and $\F(S)-x \notin \PF(S)$. 
\end{proof}

The following corollary was proved in \cite[Theorem 5.2]{B} in a different way.

\begin{corollary} \label{canonical ideal}
$S$ is almost symmetric if and only if $M-e$ is a canonical ideal of $M-M$.
\end{corollary}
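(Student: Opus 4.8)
The plan is to deduce everything from Lemma \ref{difference}, so very little remains to do. Write $T=M-M$, which is a numerical semigroup containing $S$, and recall that $M-e$ is, by definition, a canonical ideal of $T$ precisely when $\widetilde{M-e}=K(T)$. Lemma \ref{difference} tells us that, computed inside $T$, one has $\widetilde{M-e}=M-e$; moreover $\widetilde{J}\subseteq K(T)$ holds for every relative ideal $J$ of $T$, so in particular $M-e\subseteq K(T)$ automatically. Hence $M-e$ is canonical if and only if the reverse inclusion holds, i.e. if and only if $K(T)\setminus(M-e)=\emptyset$.

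Next I would invoke the explicit description in the second part of Lemma \ref{difference}, namely $K(T)\setminus(M-e)=\{x-e \mid x\in\L(S),\ \F(S)-x\notin\PF(S)\}$. Thus $M-e$ is canonical if and only if there is no $x\in\L(S)$ with $\F(S)-x\notin\PF(S)$, that is, if and only if $\F(S)-x\in\PF(S)$ for every $x\in\L(S)$. Finally, since $\L(S)=K\setminus S$ and the involution $x\mapsto\F(S)-x$ maps $\L(S)$ onto itself (if $x\notin S$ and $\F(S)-x\notin S$, the same holds after interchanging the two, and applying the map twice returns $x$), the condition ``$\F(S)-x\in\PF(S)$ for all $x\in\L(S)$'' is equivalent to ``$\L(S)\subseteq\PF(S)$'': for the nontrivial direction, given $y\in\L(S)$ apply the hypothesis to $x=\F(S)-y\in\L(S)$. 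The resulting condition $\L(S)\subseteq\PF(S)$ is exactly the characterization of almost symmetry recalled just before Lemma \ref{Lemma L(S)}, which completes the chain of equivalences.

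I do not foresee a genuine obstacle: all of the substantive work is already contained in Lemma \ref{difference}, and the present argument is just a short string of equivalences. The only step that warrants a moment's attention is the first reduction, where one must combine the equality $\widetilde{M-e}=M-e$ with the general inclusion $\widetilde{J}\subseteq K(T)$ in order to replace the definitional requirement $\widetilde{M-e}=K(T)$ by the single inclusion $K(T)\subseteq M-e$, equivalently $K(T)\setminus(M-e)=\emptyset$; after that the two uses of the involution $x\mapsto\F(S)-x$ on $\L(S)$ finish both implications.
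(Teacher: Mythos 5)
Your proposal is correct and follows essentially the same route as the paper: both reduce, via Lemma \ref{difference}, the canonicity of $M-e$ in $M-M$ to the emptiness of $\{x-e \mid x \in \L(S),\ \F(S)-x \notin \PF(S)\}$ and then identify this with $\L(S) \subseteq \PF(S)$, i.e. almost symmetry. The only difference is that you spell out the two small steps the paper leaves implicit (using $\widetilde{J} \subseteq K(M-M)$ to reduce to one inclusion, and the involution $x \mapsto \F(S)-x$ on $\L(S)$), which is fine.
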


\begin{proof}
By definition $M-e$ is a canonical ideal of $M-M$ if and only if $K(M-M) = (M-e)$. In light of the previous lemma, this means that there are no $x \in \L(S)$ such that $\F(S)-x \notin \PF(S)$, that is equivalent to say that $\L(S)\subseteq \PF(S)$, i.e. $S$ is almost symmetric.
\end{proof}

In \cite[Corollary 8]{BF} it was first proved that $S$ is almost symmetric with maximal embedding dimension if and only if $M-M$ is a symmetric semigroup. In general it holds $M-M \subseteq M-e \subseteq K(M-M)$ and the first inclusion is an equality if and only if $S$ has maximal embedding dimension, whereas the previous corollary says that the second one is an equality if and only if $S$ is almost symmetric.
Moreover, if $S$ has maximal embedding dimension, in \cite[Corollary 5.4]{CGKM} it is proved that $S$ is 2-AGL if and only if $M-M$ is an almost symmetric semigroup which is not symmetric. If we want to generalize this result in the same spirit of Corollary \ref{canonical ideal}, it is not enough to consider the 2-AGL semigroups, but we need that $S$ is GAS. More precisely, we have the following result.

\begin{theorem} \label{T. Almost Canonical ideal of M-M}
The semigroup $S$ is {\rm GAS} if and only if $M-e$ is an almost canonical ideal of the semigroup $M-M$.
\end{theorem}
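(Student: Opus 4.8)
The plan is to translate ``$M-e$ is an almost canonical ideal of $M-M$'' into a concrete condition on $\PF(S)$ and $\L(S)$ and then match it against the GAS characterizations of Section~3. Write $T=M-M$, a numerical semigroup with $M(T)=T\setminus\{0\}=M\cup\PF(S)$ (using $M-M=S\cup\PF(S)$); by Lemma~\ref{difference} the set $M-e$ is a relative ideal of $T$ with $\widetilde{M-e}=M-e$ and $\F(T)=\F(M-e)=\F(S)-e$. Applying Remark~\ref{Rem as}.4 to the relative ideal $M-e$ of $T$, we see that $M-e$ is almost canonical if and only if $K(T)\setminus(M-e)\subseteq (M-e)-M(T)$. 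A short computation (shift everything by $e$ and use $\PF(S)+M\subseteq S$) gives
\[
(M-e)-M(T)=(M-e)\ \cup\ \{\,p-e\mid p\in\PF(S),\ p+\PF(S)\subseteq S\,\},
\]
while Lemma~\ref{difference} identifies $K(T)\setminus(M-e)$ with $\{\,x-e\mid x\in\L(S),\ \F(S)-x\notin\PF(S)\,\}$. Hence $M-e$ is almost canonical if and only if condition $(\dagger)$ holds: \emph{every $x\in\L(S)$ with $\F(S)-x\notin\PF(S)$ satisfies $x\in\PF(S)$ and $x+\PF(S)\subseteq S$}. If $S$ is symmetric, then $\L(S)=\emptyset$, so $(\dagger)$ is vacuous and $S$ is GAS; hence I may assume from now on that $S$ is not symmetric, and the task is to prove that $S$ is GAS if and only if $(\dagger)$ holds.

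For ``GAS $\Rightarrow(\dagger)$'', write $2K\setminus K=\{\F(S)-x_1,\dots,\F(S)-x_r,\F(S)\}$ with the $x_j$ minimal generators having pairwise differences outside $\PF(S)$, and fix $x\in\L(S)$ with $\F(S)-x\notin\PF(S)$. Lemma~\ref{Lemma L(S)} already yields $x\in\PF(S)$, so only $x+\PF(S)\subseteq S$ is left. I would argue by contradiction: if $x+f\notin S$ for some $f\in\PF(S)$, then $x+f\in\PF(S)$ (since $x,f\in\PF(S)\subseteq M-M$ and $M-M$ is closed under addition), and a level-by-level descent in the spirit of Lemma~\ref{Lemma livelli più alti} and Theorem~\ref{Livelli più alti} --- moving the defect through successively lower ``levels'' $nK$ and using Proposition~\ref{PF GAS} to control the numbers $\F(S)-f$ --- forces a forbidden configuration, namely a minimal generator written as a nontrivial sum, or two minimal generators whose difference lies in $\PF(S)$. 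This descent, which strengthens Lemma~\ref{Lemma L(S)}, is the step I expect to be the main obstacle.

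For ``$(\dagger)\Rightarrow$ GAS'' I would use Proposition~\ref{Characterizations GAS}(2), so it suffices to show that distinct $x,y\in M\setminus(S-K)$ satisfy $x-y\notin M-M$; recall (from the proof of that proposition, via $K-S=K$ and $K-(S-K)=2K$) that $M\setminus(S-K)=\{x\in M\mid \F(S)-x\in 2K\setminus K\}$. The idea is to move such an $x$ into the range where $(\dagger)$ speaks: from a decomposition $\F(S)-x=k_1+k_2$ with $k_1,k_2\in K$ one extracts a second-type gap of $S$ of the shape $x+k_1$ (or its complement $\F(S)-(x+k_1)$) to which $(\dagger)$ applies, and iterating this transfer forces both that $x$ is a minimal generator of $S$ and that distinct such $x$ have differences outside $M-M=S\cup\PF(S)$, which is exactly condition~(2) of Proposition~\ref{Characterizations GAS}. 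I expect this direction to be the more routine of the two once $(\dagger)$ is available, the one delicate point being the bookkeeping between the configuration near $\F(S)$ (inside $2K\setminus K$) and the one inside $K$ (where $\L(S)$ and $\PF(S)$ live).
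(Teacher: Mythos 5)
Your reduction is correct and is essentially the paper's: Remark \ref{Rem as}.4 plus Lemma \ref{difference} turn the statement into the inclusion $K(M-M)\setminus(M-e)\subseteq (M-e)-((M-M)\setminus\{0\})$, and your computation of $(M-e)-M(T)$ is right, so your condition $(\dagger)$ (every $x\in\L(S)$ with $\F(S)-x\notin\PF(S)$ satisfies $x\in\PF(S)$ and $x+\PF(S)\subseteq S$) is an accurate translation. The problem is that in both implications the decisive step is exactly what you leave out. For GAS $\Rightarrow(\dagger)$ you defer the claim $x+\PF(S)\subseteq S$ to an unspecified ``level-by-level descent'' through the sets $nK$ and openly flag it as the main obstacle; as written this is a gap, not a proof. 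Moreover no descent is needed: you discarded half of Lemma \ref{Lemma L(S)}, which also gives $\F(S)-x+x_i\in\PF(S)$ for some minimal generator $x_i$ with $\F(S)-x_i\in 2K\setminus K$. From this, $x-x_i\in K$ (pseudo-Frobenius numbers are at most $\F(S)$, so $x\geq x_i$), and if $x+f\notin S$ for some $f\in\PF(S)$ then $\F(S)-x-f\in K$, whence $\F(S)-(f+x_i)=(\F(S)-x-f)+(x-x_i)\in 2K\setminus K$; the GAS hypothesis forces $f+x_i$ to be some minimal generator $x_j$, and then $x_j-x_i=f\in\PF(S)$ contradicts GAS. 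That one-step argument (it is the paper's) is what your sketch is missing.

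The converse is also only an outline, and the one concrete element you propose is not the right one: for $x\in M\setminus(S-K)$ with $\F(S)-x=k_1+k_2$, the element $x+k_1=\F(S)-k_2$ is a gap of $S$, but it need not be a gap of the second type, because $\F(S)-(x+k_1)=k_2$ may perfectly well lie in $S$; so $(\dagger)$ need not apply to it, and ``iterating this transfer'' has no starting point. The auxiliary gaps that make the argument work are different. If $x=s_1+s_2$ with $s_1,s_2\in M$, take $u=k_2+s_2=\F(S)-k_1-s_1$: one checks $u\in\L(S)$ and $\F(S)-u=k_1+s_1\notin\PF(S)$ (since $k_1+s_1+s_2=\F(S)-k_2\notin S$), so $(\dagger)$ gives $u\in\PF(S)$, hence $u+s_1=\F(S)-k_1\in S$, contradicting $k_1\in K$; this shows every element of $2K\setminus K$ other than $\F(S)$ is $\F(S)-x$ with $x$ a minimal generator. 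If moreover $z=x_i-x_j\in\PF(S)$ and $\F(S)-x_i=k_1+k_2$, take $v=\F(S)-k_2-z$: again $v\in\L(S)$ with $\F(S)-v=k_2+z\notin\PF(S)\cup S$, and $(\dagger)$ (now using $v+\PF(S)\subseteq S$, since $z\in\PF(S)$) gives $\F(S)-k_2\in S$, a contradiction. With these substitutions your plan closes (whether you then invoke Proposition \ref{Characterizations GAS}(2) or verify the definition of GAS directly is immaterial), but as submitted both halves stop short of the actual argument.
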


\begin{proof}
In the light of Remark \ref{Rem as}.4 and Lemma \ref{difference}, $M-e$ is an almost canonical ideal of $M-M$ if and only if
\begin{equation} \label{Eq.Canonical Ideal of M-M}
K(M-M) \setminus (M-e) \subseteq ((M-e)-((M-M)\setminus \{0\})).
\end{equation}
Assume that $S$ is GAS with $2K \setminus K=\{\F(S)-x_1, \dots, \F(S)-x_r, \F(S)\}$.
By Lemma \ref{difference} the elements of $K(M-M) \setminus (M-e)$ can be written as $x-e$ with $x \in \L(S)$ and $\F(S)-x \notin \PF(S)$. In addition, Lemma \ref{Lemma L(S)} implies that both $x$ and $\F(S)-x+x_i$ are pseudo-Frobenius numbers of $S$ for some $i$.
Let $0 \neq z\in (M-M)$. We need to show that $x-e+z\in M-e$, i.e. $x+z \in M$. Assume by contradiction $x+z \notin M$, which implies $\F(S)-x-z \in K$. Since $x+z \notin M$ and $x \in \PF(S)$, it follows that $z \notin M$ and, then, $z \in \PF(S)$; hence, $z+x_i \in M$ and $\F(S)-z-x_i \notin K$. We also have that $x-x_i \in K$, since $\F(S)-x+x_i \in \PF(S)$. Therefore, 
\[
\F(S)-z-x_i=(\F(S)-x-z)+(x-x_i) \in 2K \setminus K
\]
and this yields a contradiction because $(z+x_i)-x_i \in \PF(S)$ and $S$ is a GAS semigroup.

Conversely, assume that the inclusion (\ref{Eq.Canonical Ideal of M-M}) holds.
An element in $2K\setminus K$ can be written as $\F(S)-s$ for some $s \in S$, since it is not in $K$. 
Assume by contradiction that $s \neq 0$ is not a minimal generator of $S$, i.e. $\F(S)-s_1-s_2=k_1+k_2 \in 2K\setminus K$ for some $s_1,s_2 \in M$ and $k_1, k_2 \in K$. It follows that $\F(S)-k_1-s_1=k_2 + s_2 \notin S$, otherwise $\F(S)-s_1 \in K$. Moreover, $k_1+s_1 \notin \PF(S) \cup S$, since $k_1+s_1+s_2 = \F(S)-k_2 \notin S$.
Hence, Lemma \ref{difference} and our hypothesis imply that 
\[k_2+s_2-e=\F(S)-k_1-s_1-e \in ((M-e)-((M-M)\setminus \{0\})).\]
Therefore, $\F(S)-k_1-e=(k_2+s_2-e)+s_1 \in M-e$ and, thus, $k_1 \notin K$ yields a contradiction.
This means that $2K \setminus K=\{\F(S)-x_1, \dots, \F(S)-x_r, \F(S)\}$ with $x_i$ minimal generator of $S$ for every $i$. Now, assume by contradiction that $z=x_i-x_j \in \PF(S)$ for some $i,j$ and let 
$\F(S)-x_i=\F(S)-x_j-z=k_1+k_2$ for some $k_1, k_2 \in K$.
Since $k_2+z+x_j=\F(S)-k_1 \notin S$, it follows that $k_2+z \notin S \cup \PF(S)$. Moreover, $\F(S)-k_2-z \notin S$, otherwise $\F(S)-k_2\in S$. Therefore, Lemma \ref{difference} and inclusion (\ref{Eq.Canonical Ideal of M-M}) imply that $\F(S)-k_2-z-e \in ((M-e)-((M-M)\setminus \{0\}))$ and, since $z \in M-M$, it follows that $\F(S)-k_2 \in M$ which is a contradiction because $k_2 \in K$.
\end{proof}

\begin{example} \rm
Consider $S=\langle 9,13,14,15,19 \rangle$, that is a GAS numerical semigroup with $2K \setminus K=\{25-15,25-13,25-9,25\}$. Then, $M-9$ is an almost canonical ideal of $M-M$ by the previous theorem. In fact
\begin{equation*}
\begin{split}
&M-M=\{0,9,13,14,15,17, \rightarrow\}, \\
&K(M-M)=\{0,4,5,6,8,9,10,11,12,13,14,15,17 \rightarrow\},\\
&M-9=\{0,4,5,6,9,10,13,14,15,17 \rightarrow\}, \\
&(M-9)-((M-M)\setminus \{0\})=K(M-M) \cup \{16\}=\{0,4,5,6,8 \rightarrow\}.
\end{split} 
\end{equation*}
\end{example}

\begin{remark} \rm
If $S$ is {\rm GAS}, it is possible to compute the type of $M-e$ seen as an ideal of the semigroup $M-M$. In fact by Theorem \ref{T. Almost Canonical ideal of M-M} and Proposition \ref{almost canonical ideal} it follows that
\begin{align*}
t(M-e)&=g(M-e)+g(M-M)-\F(M-M)= \\
&=g(M)-e+g(S)-t(S)-\F(S)+e=
2g(S)+1-t(S)-\F(S).
\end{align*}
Moreover, we recall that $2g(S) \geq t(S)+\F(S)$ is always true and the equality holds exactly when $S$ is almost symmetric. Therefore,
as $t(S)$ is a measure of how far $S$ is from being symmetric, $t(M-e)=t(M)$ (as ideal of $M-M$) can be seen as a measure of how far $S$ is from being almost symmetric. On the other hand, we note that the type of $M$ as an ideal of $S$ is simply $t(S)+1$.
\end{remark}

If $S$ has type 2 and $\PF(S)=\{f,\F(S)\}$, in \cite[Theorem 6.2]{CGKM} it is proved that $S$ is 2-AGL if and only if $3(\F(S)-f) \in S$ and $\F(S)=2f-x$ for some minimal generator $x$ of $S$. In the next proposition we generalize this result to the GAS case.

\begin{proposition} \label{type 2}
Assume that $S$ is not almost symmetric and that it has type 2, i.e. $\PF(S)=\{f,\F(S)\}$. Then, $S$ is {\rm GAS} if and only if $\F(S)=2f-x$ for some minimal generator $x$ of $S$. In this case, if $n$ is the minimum integer for which $n(\F(S)-f) \in S$, then  $|2K \setminus K|=2$, $|3K \setminus 2K|= \dots = |(n-1)K \setminus (n-2)K|=1$ and $nK=(n-1)K$.
\end{proposition}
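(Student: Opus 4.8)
The plan is to analyze the structure of $2K$, $3K$, \dots via a ``one generator at a time'' argument, mirroring the way Lemma \ref{Lemma livelli più alti} and Theorem \ref{Livelli più alti} peel off minimal generators. First I would treat the characterization of GAS in type 2. For the direction ``GAS $\Rightarrow$ $\F(S)=2f-x$'', note that since $S$ is not almost symmetric, $2K\setminus K$ must contain an element $\F(S)-x$ with $x\neq 0$, and by the GAS hypothesis $x$ is a minimal generator. Write $\F(S)-x=k_1+k_2$ with $k_i\in K$. Since $t(S)=2$, the canonical ideal $K$ is very constrained: $K=S\cup(\{f\}+ (\text{something}))$, and more usefully one knows that $\F(S)-k_i\in\PF(S)=\{f,\F(S)\}$ for each $i$ by the argument in the proof of Proposition \ref{PF GAS}(1); since $k_i\neq \F(S)+\text{(something)}$ forces $\F(S)-k_i\neq \F(S)$, we get $\F(S)-k_1=\F(S)-k_2=f$, hence $k_1=k_2=\F(S)-f$ and $\F(S)-x=2(\F(S)-f)$, i.e. $\F(S)=2f-x$. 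Conversely, if $\F(S)=2f-x$ with $x$ a minimal generator, then $\F(S)-f=f-x\in K$ (as $\F(S)-(f-x)=f\notin S$ because $f\in\PF(S)\subseteq\mathbb{N}\setminus S$, using $x\neq 0$), so $\F(S)-x=2(\F(S)-f)\in 2K$, and it is not in $K$ since $x\in S$. The only delicate point is to rule out other elements of $2K\setminus K$ and to check the no-pseudo-Frobenius-difference condition: any element of $2K\setminus K$ has the form $\F(S)-s$ with $s\in S$, $\F(S)-s=k_1'+k_2'$, and again $\F(S)-k_i'\in\{f,\F(S)\}$; if some $\F(S)-k_i'=\F(S)$ then $k_i'\leq \F(S)$ is a nonnegative element with $\F(S)-k_i'=\F(S)$, forcing $k_i'=0$, whence $\F(S)-s=k_{3-i}'\in K$, contradiction; so both $k_i'=\F(S)-f$ and $s=x$. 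This shows $|2K\setminus K|=2$ with the single nonzero ``slope'' being the minimal generator $x$, so the difference condition is vacuous, and $S$ is GAS.

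Next I would establish the chain $|3K\setminus 2K|=\dots=|(n-1)K\setminus(n-2)K|=1$ and $nK=(n-1)K$, where $n$ is minimal with $n(\F(S)-f)\in S$. Set $d=\F(S)-f$. I claim by induction on $j$ with $2\le j\le n-1$ that $jK\setminus (j-1)K=\{\F(S)-(j-1)x\}$, a singleton, and that $\F(S)-(j-1)x\notin K$ is witnessed by $\F(S)-(j-1)x=jd - \text{(correction)}$; more cleanly, since $k\in K$ with $\F(S)-k\notin\PF(S)\cup\{0\}$ would contradict $t(S)=2$, every $k\in K$ satisfies $\F(S)-k\in\{0,f,\F(S)\}$ only when... — here I would instead argue directly: an element of $jK\setminus(j-1)K$ is $k_1+\dots+k_j$ with $k_1+\dots+k_{j-1}\notin (j-2)K$, so inductively $k_1+\dots+k_{j-1}=\F(S)-(j-2)x=(j-1)d$ after relabeling (using that each $k_i$ appearing in a minimal such sum must be $d$, since $\F(S)-k_i\in\{f,\F(S)\}$ and the $\F(S)$ case again forces $k_i=0$), so $k_j\in K$ and $k_1+\dots+k_j=(j-1)d+k_j$. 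Whether this lies in $(j-1)K$ depends on $k_j$; the genuinely new element is obtained by taking $k_j=d$, giving $jd=\F(S)-(j-1)x$, which is in $S$ precisely when $jd\in S$, i.e. $j\ge n$. Thus for $2\le j\le n-1$, $jd=\F(S)-(j-1)x\notin S$ and one checks it is not in $(j-1)K$; this is the unique new element, so $|jK\setminus(j-1)K|=1$. For $j=n$: $nd\in S$, so $nd\notin nK\setminus(n-1)K$ via this construction, and any other candidate reduces as above, giving $nK=(n-1)K$.

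The main obstacle I anticipate is making rigorous the step ``every $k_i$ in a minimal-length representation of an element of $jK\setminus(j-1)K$ equals $d$.'' The point is: if $k_1+\dots+k_j\in jK\setminus(j-1)K$ then no proper sub-sum of $j-1$ of the $k_i$'s lies in $(j-2)K$; in particular $k_1+k_2\notin K$, hence $k_1+k_2\in 2K\setminus K$, hence by the first part $k_1+k_2=2d$ and $\F(S)-(k_1+k_2)=x$, and symmetrically every pair sums into $2K\setminus K$ with the same value — but from $k_1+k_2=2d$ and $k_1+k_3=2d$ one gets $k_2=k_3$, and then $3k_1=3d$? Not quite; I would instead use that $\F(S)-(k_1+k_2)=x$ is a minimal generator together with $\F(S)-k_1,\F(S)-k_2\in\{f,\F(S)\}$ to pin down $k_1,k_2\in\{0,d\}$, and $0$ is excluded in a minimal representation. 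Once this local rigidity is in place, the induction and the equalities $|jK\setminus(j-1)K|=1$ follow by bookkeeping, and the termination $nK=(n-1)K$ is exactly the statement that $d$ has ``order'' $n$ relative to $S$. Throughout I would lean on Proposition \ref{PF GAS}(1), Theorem \ref{Livelli più alti}, and the type-$2$ hypothesis to control $K$, and cite \cite[Theorem 6.2]{CGKM} only for orientation, not as a black box.
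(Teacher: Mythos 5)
Your forward implication matches the paper, but the converse direction has a genuine gap that also propagates into the second half. To rule out further elements of $2K\setminus K$ you write: take any $\F(S)-s=k_1'+k_2'\in 2K\setminus K$ and ``again $\F(S)-k_i'\in\{f,\F(S)\}$''. Nothing justifies this. For an arbitrary $k\in K$ one only knows $\F(S)-k\notin S$, i.e.\ $\F(S)-k$ is a gap, not a pseudo-Frobenius number; and you cannot invoke Proposition \ref{PF GAS} (or its proof), because its hypothesis is precisely that every element of $2K\setminus K$ is $\F(S)$ minus a minimal generator --- which is what you are trying to prove, so the argument is circular. The paper closes this hole with a maximality trick: among the $y\neq 0,x$ with $\F(S)-y\in 2K\setminus K$ choose $y$ maximal with respect to $\leq_S$; then for every $m\in M$ one has $k_i-m\notin K$ (otherwise $\F(S)-(y+m)=(k_i-m)+k_{3-i}\in 2K\setminus K$, contradicting maximality), which together with $\F(S)-k_i\notin S$ gives $\F(S)-k_i\in\PF(S)\setminus\{\F(S)\}=\{f\}$, hence $y=x$. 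Some argument of this kind (or a substitute) is indispensable; without it your converse does not go through.

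The same unjustified claim is what you lean on for the rigidity step ``every $k_i$ equals $d=\F(S)-f$'' in the higher levels, and here the fix is actually the pair-sum argument you started and then abandoned (``Not quite''): once $|2K\setminus K|=2$ is known, every pair $k_i+k_j$ of summands of an element of $rK\setminus(r-1)K$ ($r\ge 3$) lies in $2K\setminus K=\{2d,\F(S)\}$; the value $\F(S)$ is easily excluded, and $k_i+k_j=2d$ for all pairs forces $k_2=k_3=\dots$ and then $k_i=d$ for all $i$ --- this is exactly the paper's route. Two further points: your identification of the unique new element as $\F(S)-(j-1)x$ is an algebra slip ($jd=\F(S)-(j-1)x$ only for $j=2$; the correct element is $j(\F(S)-f)$), and the assertion ``one checks it is not in $(j-1)K$'' hides a nontrivial step: knowing $jd\notin S$ for $j<n$ does not give $jd\notin K$, and the paper needs Lemma \ref{Lemma L(S)} (plus the rigidity argument to exclude membership in lower powers of $K$) to establish this.
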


\begin{proof}
Assume first that $S$ is GAS and let $\F(S)-x$, $\F(S)-y \in 2K \setminus K$. Proposition \ref{PF GAS} implies that $\F(S)+x=f_1+f_2$ and $\F(S)+y=f_3+f_4$ for some $f_1,f_2,f_3,f_4 \in \PF(S)$. Since $f_i$ has to be different from $\F(S)$ for all $i$, it follows that $\F(S)+x=\F(S)+y=2f$ and, then, $x=y$. In particular, $\F(S)=2f-x$. 

Assume now that $\F(S)=2f-x$ for some minimal generator $x$ of $S$. Clearly, $\F(S)-x=2(\F(S)-f) \in 2K \setminus K$. Let $y \neq 0,x$ be such that $\F(S)-y \in 2K \setminus K$. Since $2K \setminus K$ is finite, we may assume that $y$ is maximal among such elements with respect to $\leq_S$, that is $\F(S)-(y+m) \notin 2K\setminus K$ for every $m \in M$. Let $\F(S)-y=k_1+k_2$ with $k_1$, $k_2 \in K$. Since $\F(S)-y-m=k_1+k_2-m \notin 2K \setminus K$, then $k_1-m$ and $k_2-m$ are not in $K$, which is equivalent to $\F(S)-k_1+m \in S$ and $\F(S)-k_2+m \in S$ for every $m \in M$. This means that $\F(S)-k_1$, $\F(S)-k_2 \in \PF(S)\setminus \{\F(S)\}$ which implies $\F(S)-y=2(\F(S)-f)=\F(S)-x$ and, thus, $x=y$. Therefore, $|2K \setminus K|=2$ and $S$ is GAS. 

Moreover, if $S$ is GAS and $\F(S)-y =k_1+\dots+k_r \in rK \setminus (r-1)K$ with $r>2$ and $k_1, \dots, k_r \in K$, then $k_1= \dots=k_r=\F(S)-f$ because $k_i+k_j \in 2K \setminus K$ for every $i$ and $j$. Therefore, if $n(\F(S)-f) \in S$, then $nK=(n-1)K$. Assume that $r(\F(S)-f) \notin S$. Clearly, it is in $rK$ and we claim that it is not in $K$. In fact, if $r(\F(S)-f) \in K$, it follows that it is in $\L(S)$ and, if $\F(S)-r(\F(S)-f)=f$, then $(r-1)(\F(S)-f)=0 \in S$ yields a contradiction. Therefore, Lemma \ref{Lemma L(S)} implies that $\F(S)-r(\F(S)-f)+x =f$ and, again, $(r-1)(\F(S)-f)=x \in S$ gives a contradiction. This means that $r(\F(S)-f) \in rK \setminus K$. Moreover, if $r(\F(S)-f)=k_1+\dots+ k_{r'} \in r'K\setminus(r'-1)K$ with $1<r'<r$ and $k_1, \dots, k_{r'} \in K$, we get $k_1=\dots=k_{r'}=\F(S)-f$ as above, that is a contradiction. Hence, $|rK \setminus (r-1)K|=1$ for every $1<r<n$.
\end{proof}

\begin{example} \rm \label{GAS tipo 2}
Consider $S=\langle 5,6,7\rangle$. In this case $f=8$ and $\F(S)=9$. Therefore, the equality $\F(S)=2f-7$ implies that $S$ is GAS. With the notation of the previous corollary we have $n=5$ and, in fact, $2K \setminus K=\{2,9\}$, $3K \setminus 2K=\{3\}$ and $4K \setminus 3K=\{4\}$.  
\end{example}

In \cite{HHS} another generalization of almost Gorenstein ring is introduced. More precisely a Cohen-Macaulay local ring admitting a canonical module $\omega$ is said to be {\it nearly Gorenstein} if the trace of $\omega$ contains the maximal ideal. In the case of numerical semigroups it follows from \cite[Lemma 1.1]{HHS} that $S$ is nearly Gorenstein if and only if $M \subseteq K+(S-K)$, see also the arXiv version of \cite{HHS}. It is easy to see that an almost symmetric semigroup is nearly Gorenstein, but in \cite{CGKM} it is noted that a 2-AGL semigroup is never nearly Gorenstein (see also \cite[Remark 3.7]{BS} for an easy proof in the numerical semigroup case). This does not happen for GAS semigroups.

\begin{corollary}
Let $S$ be a {\rm GAS} semigroup, not almost symmetric, with $\PF(S)=\{f,\F(S)\}$. It is nearly Gorenstein if and only if $3f-2\F(S) \in S$.
\end{corollary}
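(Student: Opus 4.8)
The plan is to reduce the nearly Gorenstein condition $M \subseteq K + (S-K)$ (recalled just above) to a single statement about the minimal generator $x$ appearing in Proposition \ref{type 2}, namely the one with $\F(S) = 2f - x$; recall from that proposition that $2K \setminus K = \{\F(S)-x, \F(S)\}$, and note the elementary consequences $\F(S) - x = 2(\F(S)-f)$, $x < f$ (because $f < \F(S)$), and $x > 0$.

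The first, and I expect crucial, step is to identify $S - K$ exactly. Since $K$ is canonical, $S = K - K$, so $S - K = (K-K) - K = K - 2K$; inserting $2K = K \cup \{\F(S)-x, \F(S)\}$, an integer $z$ lies in $K - 2K$ precisely when $z \in K - K = S$, when $z + \F(S) \in K$ (which, for $z \in S$, means $z \in M$), and when $z + (\F(S)-x) \in K$ (which means $x - z \notin S$). For $z \in M$ the last condition fails only for $z = x$, by minimality of the generator $x$. Hence $S - K = M \setminus \{x\}$. Granting this, every $m \in M$ with $m \neq x$ belongs to $K + (S-K)$ via $m = 0 + m$ (as $0 \in K$), so $S$ is nearly Gorenstein if and only if $x \in K + (S-K)$.

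Both implications then follow quickly. For the ``if'' part I would assume $3f - 2\F(S) \in S$: first rule out $3f - 2\F(S) = 0$ (it would force $f = 2x \in S$, impossible), so $z := 3f-2\F(S)$ is a positive element of $S$ with $z \neq x$ (as $z = 2x - f$ and $x < f$), hence $z \in M \setminus \{x\} = S - K$; since moreover $\F(S) - f \in K$ and $(\F(S)-f) + z = 2f - \F(S) = x$, we conclude $x \in K + (S-K)$. For the ``only if'' part I would write $x = k + z$ with $k \in K$ and $z \in S - K = M \setminus \{x\}$; then $z < x$ (because $k = x - z \ge 0$ and $z \neq x$), and $y := 2(\F(S)-f) + z$, which equals $\F(S) - k$, does not lie in $S$. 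One checks $0 < y < \F(S)$ and $\F(S) - y = x - z \notin S$ (minimality of $x$ once more), so $y \in \L(S)$, whereas $\F(S) - y = x - z < x < f$ shows $\F(S) - y \notin \PF(S)$. Then Lemma \ref{Lemma L(S)} applies --- its hypothesis holds since the unique element of $2K \setminus K$ other than $\F(S)$ is $\F(S) - x$ with $x$ a minimal generator --- and gives $y \in \PF(S) = \{f, \F(S)\}$; as $y < \F(S)$, necessarily $y = f$, i.e. $z = f - 2(\F(S)-f) = 3f - 2\F(S)$, whence $3f - 2\F(S) \in S$.

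The main obstacle is the identity $S - K = M \setminus \{x\}$: this is where the GAS hypothesis genuinely enters, through the shape of $2K$ from Proposition \ref{type 2} combined with the biduality $S - K = K - 2K$. After that, everything is organised around the single identity $x = (\F(S)-f) + (3f - 2\F(S))$ and the use of Lemma \ref{Lemma L(S)} to recognise $2(\F(S)-f) + z$ as the pseudo-Frobenius number $f$.
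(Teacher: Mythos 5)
Your proof is correct, and it takes a genuinely different route from the paper. The paper invokes an external characterization of nearly Gorenstein semigroups from \cite{MS} (for every minimal generator $y$ there is $g\in\PF(S)$ with $g+y-g'\in S$ for all $g'\in\PF(S)\setminus\{g\}$), checks it directly for every generator $y\neq x$, and is left with the single condition $f+x-\F(S)\in S$ for the distinguished generator $x=2f-\F(S)$. You instead work straight from the definition $M\subseteq K+(S-K)$ recalled from \cite{HHS}: using biduality $S-K=(K-K)-K=K-2K$ together with the shape $2K\setminus K=\{\F(S)-x,\F(S)\}$ from Proposition \ref{type 2}, you obtain the explicit identity $S-K=M\setminus\{x\}$ (which is where the GAS hypothesis enters and which is a nice byproduct not appearing in the paper), reduce nearly Gorensteinness to $x\in K+(S-K)$, and settle the two implications via the single decomposition $x=(\F(S)-f)+(3f-2\F(S))$ and, for necessity, Lemma \ref{Lemma L(S)} to force $2(\F(S)-f)+z=f$. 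All the individual steps check out: the membership test for $K-2K$, the exclusion of $z=x$ by minimality of $x$, the inequalities $0<3f-2\F(S)<x<f$, and the applicability of Lemma \ref{Lemma L(S)} (its hypothesis holds since $S$ is GAS and not symmetric). What the paper's route buys is brevity once one accepts the \cite{MS} criterion; what yours buys is self-containedness within the paper's own toolbox (Proposition \ref{type 2} and Lemma \ref{Lemma L(S)}) plus the explicit computation of $S-K$, which makes transparent that the trace condition can only fail at the generator $x$.
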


\begin{proof}
We will use the following characterization proved in \cite{MS}: $S$ is nearly Gorenstein if and only if for every minimal generator $y$ of $S$ there exists $g \in \PF(S)$ such that $g+y-g' \in S$ for every $g' \in \PF(S)\setminus \{g\}$.

By Proposition \ref{type 2} it follows that $\F(S)=2f-x$ with $x$ minimal generator of $S$. 
Let $y \neq x$ another minimal generator of $S$ and assume by contradiction that $\F(S)+y-f \notin S$. Therefore, there exists $s \in S$ such that $\F(S)+y-f+s \in \PF(S)$. If it is equal to $\F(S)$, then $f=y+s \in S$ yields a contradiction. If $\F(S)+y-f+s=f$, then $y+s=2f-\F(S)=x$ by Proposition \ref{type 2} and this gives a contradiction, since $x \neq y$ is a minimal generator of $S$. Hence, $\F(S)+y-f \in S$ for every minimal generator $y \neq x$.
On the other hand, $\F(S)+x-f=2f-x+x-f=f \notin S$ and, therefore, $S$ is nearly Gorenstein if and only if $f+x-\F(S)=3f-2\F(S)\in S$.
\end{proof}

\begin{examples} \rm
{\bf 1.} In Example \ref{GAS tipo 2} we have $3f-2\F(S)=6 \in S$ and, then, the semigroup is both GAS and nearly Gorenstein. \\
{\bf 2.} Consider $S=\langle 9,17,67\rangle$ that has $\PF(S)=\{59,109\}$. Since $2*59-109=9$ and $3*59-2*109=-41 \notin S$, the semigroup is GAS but not nearly Gorenstein. \\
{\bf 3.} If $S=\langle 10,11,12,25 \rangle$, we have $\PF(S)=\{38,39\}$ and $2*38-39=37$ is not a minimal generators, thus, $S$ is not GAS. On the other hand, it is straightforward to check that this semigroup is nearly Gorenstein.
\end{examples}

\begin{remark} \rm
In literature there are other two generalizations of almost Gorenstein ring. One is given by the so-called ring with canonical reduction, introduced in \cite{R}, which is a one-dimensional Cohen-Macaulay local ring $(R,\m)$ possessing a canonical ideal $I$ that is a reduction of $\m$. When $R=k[[S]]$ is a numerical semigroup ring, this definition gives a generalization of almost symmetric semigroup and $R$ has a canonical reduction if and only if $e+\F(S)-g \in S$ for every $g \in \mathbb{N} \setminus S$, see \cite[Theorem 3.13]{R}. This notion is unrelated with the one of GAS semigroup, in fact it is easy to see that $S=\langle 4,7,9,10 \rangle$ is GAS and it doesn't have canonical reductions, while $S=\langle 8,9,10,22 \rangle$ is not GAS, but has a canonical reduction.

Another generalization of the notion of almost Gorenstein ring is given by the so-called generalized Gorenstein ring, briefly GGL, introduced in \cite{GIKT,GK}. A Cohen-Macaulay local ring $(R,\mathfrak{m})$ with a canonical module $\omega$ is said to be GGL with respect to $\mathfrak{a}$ if either $R$ is Gorenstein or there exists an exact sequence of $R$-modules
\[
0 \xrightarrow{} R \xrightarrow{\varphi} \omega \xrightarrow{} C \xrightarrow{} 0
\]
where $C$ is an Ulrich module of $R$ with respect to some $\mathfrak m$-primary ideal $\mathfrak a$ and $\varphi \otimes R/\mathfrak a$ is injective. We note that $R$ is almost Gorenstein and not Gorenstein if and only if it is GGL with respect to $\mathfrak m$. Let $S$ be a numerical semigroup and order $\PF(S)=\{f_1,f_2, \dots, f_t=\F(S)\}$ by the usual order in $\mathbb{N}$. Defining a numerical semigroup GGL if its associated ring is GGL, in \cite{T} it is proved a useful characterization: $S$ is GGL if either it is symmetric or the following properties hold:
\begin{enumerate}
	\item there exists $x \in S$ such that $f_i+f_{t-i}=\F(S)+x$ for every $i=1, \dots, \lceil t/2 \rceil$;
	\item $((c-M) \cap S) \setminus c=\{x\}$, where $c=S-\langle K \rangle$.
\end{enumerate}
Using this characterization it is not difficult to see that also this notion is unrelated with the one of GAS semigroup. In fact, the semigroups in Examples \ref{Examples}.2 and \ref{Examples}.3 are GAS but do not satisfy (1), whereas the semigroup $S=\langle 5,9,12 \rangle$ is not GAS by Proposition \ref{type 2}, because $\PF(S)=\{13,16\}$, but it is easy to see that it is GGL with $x=10$.
\end{remark}

\section{Constructing GAS numerical semigroups}

In this section we study the behaviour of the GAS property with respect to some constructions. In this way we will be able to construct many numerical semigroups satisfying this property. 

\subsection{Gluing of numerical semigroups}

Let $S_1=\langle s_1, \dots, s_n \rangle$ and $S_2=\langle t_1, \dots, t_m \rangle$ be two numerical semigroups and assume that $s_1, \dots, s_n$ and $t_1, \dots, t_m$ are minimal generators of $S_1$ and $S_2$ respectively. Let also $a\in S_2$ and $b \in S_1$ be not minimal generators of $S_2$ and $S_1$ respectively and assume $\gcd(a,b)=1$. The numerical semigroup $\langle aS_1,bS_2 \rangle=\langle as_1, \dots, as_n, bt_1, \dots, bt_m \rangle$ is said to be the gluing of $S_1$ and $S_2$ with respect to $a$ and $b$. It is well-known that $as_1, \dots, as_n, bt_1, \dots, bt_m$ are its minimal generators, see \cite[Lemma 9.8]{RG}.
Moreover, the pseudo-Frobenius numbers of $T=\langle aS_1,bS_2 \rangle$ are
\[
\PF(T)=\{af_1+bf_2+ab \mid f_1 \in \PF(S_1), f_2 \in \PF(S_2)\},
\]
see \cite[Proposition 6.6]{N}. In particular, $t(T)=t(S_1)t(S_2)$ and $\F(T)=a\F(S_1)+b\F(S_2)+ab$. Consequently, since $K(T)$ is generated by the elements $\F(T)-f$ with $f \in \PF(T)$, it is easy to see that $K(T)=\{ak_1+bk_2 \mid k_1 \in K(S_1), k_2 \in K(S_2) \}$.

Since $t(T)=t(S_1)t(S_2)$, it follows that $T$ is symmetric if and only if both $S_1$ and $S_2$ are symmetric, so in the next theorem we exclude this case.

\begin{theorem} \label{gluing}
Let $T$ be a gluing of two numerical semigroups and assume that $T$ is not symmetric. The following are equivalent:
\begin{enumerate}
\item $T$ is {\rm GAS};
\item $T$ is {\rm 2-AGL};
\item $T=\langle 2S, b \mathbb{N} \rangle$ with $b \in S$ odd and $S$ is an almost symmetric semigroup, but not symmetric.
\end{enumerate}
\end{theorem}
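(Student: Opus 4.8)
The strategy is to prove $(3)\Rightarrow(2)\Rightarrow(1)$ trivially or via the established structure of 2-AGL semigroups, and then to close the loop with the nontrivial implication $(1)\Rightarrow(3)$, which is where the real work lies. Recall from Section 3 that every GAS semigroup has $2K\setminus K=\{\F(T)-x_1,\dots,\F(T)-x_r,\F(T)\}$ with the $x_i$ minimal generators satisfying $x_i-x_j\notin\PF(T)$. The plan is to exploit the explicit description of the gluing data: $\F(T)=a\F(S_1)+b\F(S_2)+ab$, $\PF(T)=\{af_1+bf_2+ab\mid f_i\in\PF(S_i)\}$, and $K(T)=\{ak_1+bk_2\mid k_i\in K(S_i)\}$, so that $2K(T)=\{a\kappa_1+b\kappa_2\mid \kappa_i\in 2K(S_i)\}$ as well. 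From these I can compute $2K(T)\setminus K(T)$ in terms of the analogous sets for $S_1$ and $S_2$: an element $a\kappa_1+b\kappa_2$ fails to lie in $K(T)$ precisely when it cannot be written as $ak_1+bk_2$ with $k_i\in K(S_i)$, and a coprimality/uniqueness argument (of the kind used in \cite{RG} for gluings) should pin down when this happens, namely when at least one of $\kappa_i$ lies in $2K(S_i)\setminus K(S_i)$.

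\textbf{The main steps.} First I would establish the $(3)\Rightarrow(2)$ direction: if $S$ is almost symmetric and not symmetric, then $S$ is 1-AGL, i.e.\ $|\langle K(S)\rangle\setminus K(S)|=1$; since $\mathbb{N}=\langle b\mathbb{N}\rangle$ is symmetric ($0$-AGL) with Frobenius number $-b$, a direct computation with $K(T)=\{ak_1+bk_2\}$ should give $|\langle K(T)\rangle\setminus K(T)|=|2K(T)\setminus K(T)|=2$, hence $T$ is 2-AGL. The implication $(2)\Rightarrow(1)$ is immediate since every 2-AGL semigroup is GAS (a 2-AGL semigroup has $2K\setminus K=\{\F(S)-x,\F(S)\}$ with $x$ a minimal generator by the Proposition after the definition of $n$-AGL, and the condition $x_i-x_j\notin\PF(S)$ is vacuous with a single $x_i$). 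For $(1)\Rightarrow(3)$, suppose $T=\langle aS_1,bS_2\rangle$ is GAS and not symmetric. Since $t(T)=t(S_1)t(S_2)\geq 2$, at least one factor — say $S_1$ — is not symmetric. I would first show $S_2$ must be symmetric: if not, then using the product form of $\PF(T)$ and $2K(T)\setminus K(T)$, I should be able to produce two elements $\F(T)-x_i,\F(T)-x_j\in 2K(T)\setminus K(T)$ with $x_i-x_j\in\PF(T)$, contradicting GAS — the point being that nontrivial "defects" in both $2K(S_1)\setminus K(S_1)$ and $2K(S_2)\setminus K(S_2)$ combine multiplicatively and produce differences that are of the form $af_1+bf_2+ab$. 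Once $S_2$ is symmetric, $t(S_2)=1$, so $S_2=\langle c\rangle$ for a single generator; but $b\in S_1$ must be a non-minimal-generator element with $\gcd(a,b)=1$, and for the gluing to be as described with $S_2$ a free monoid on one generator we need $S_2=\mathbb{N}$, i.e.\ the generator is $1$, forcing $a\in S_2=\mathbb{N}$ automatically; writing things out, $T=\langle aS_1, b\mathbb{N}\rangle$. Then I would determine $a$: the elements of $2K(T)\setminus K(T)$ are, modulo the identification above, $\{a k : k\in 2K(S_1)\setminus K(S_1)\}$ translated appropriately, and each such must be $\F(T)-(\text{minimal generator})$ with the minimal generators of $T$ being $as_1,\dots,as_n,b$; matching $\F(T)-b\cdot 1$ against this set — i.e.\ forcing $b$ itself to appear as a "defect generator" — should, combined with the GAS difference condition, force $a=2$ and simultaneously force $S=S_1$ to be almost symmetric (its $2K(S)\setminus K(S)\subseteq\{\F(S)\}$), with $b$ odd coming from $\gcd(a,b)=\gcd(2,b)=1$.

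\textbf{Main obstacle.} The hard part will be the bookkeeping in $(1)\Rightarrow(3)$: extracting from "$T$ GAS" both that $S_2$ is symmetric \emph{and} that $a=2$ \emph{and} that $S=S_1$ is almost symmetric, all simultaneously. The delicate point is translating the GAS constraint "$x_i-x_j\notin\PF(T)=\{af_1+bf_2+ab\}$" back through the gluing: a difference of two minimal generators of $T$ is either of the form $a(s-s')$ or $b-as$ or $as-b$, and one must show that the GAS condition, which a priori only restricts such differences, already forces $S_1$ to have no nontrivial $2K$-defects beyond $\F(S_1)$ and forces the gluing parameter $a$ down to $2$. I expect this to require carefully choosing witnesses $k_1,k_2\in K(S_1)$ with $k_1+k_2\in 2K(S_1)\setminus K(S_1)$ (available since $S_1$ is not symmetric) and tracking how $b$ being a non-minimal generator of $S_1$ (hence $b=\sum$ of at least two elements of $S_1$, allowing the Lemma-\ref{Lemma livelli più alti}-type descent) interacts with the element $\F(T)-b$. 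A secondary technical point is justifying the claim $2K(T)\setminus K(T)=\{a\kappa_1+b\kappa_2 : \kappa_i\in 2K(S_i),\ (\kappa_1,\kappa_2)\notin K(S_1)\times K(S_2)\}$ with no overlap — this rests on the uniqueness of representations $n=a\alpha+b\beta$ with $0\le\beta<a$ together with the membership criterion for $K(T)$, and should follow the pattern of standard gluing arguments but deserves to be stated cleanly as the first lemma of the proof.
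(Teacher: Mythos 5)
Your implications (2)$\Rightarrow$(1) and, in outline, (3)$\Rightarrow$(2) are essentially the paper's, but the plan for (1)$\Rightarrow$(3) contains a genuine gap. The fatal step is ``once $S_2$ is symmetric, $t(S_2)=1$, so $S_2=\langle c\rangle$ for a single generator, hence $S_2=\mathbb{N}$'': type one does \emph{not} mean one minimal generator (e.g.\ $\langle 3,4\rangle$ or $\langle 2,7\rangle$ are symmetric), and $\mathbb{N}$ is the only numerical semigroup generated by a single element, so even if you succeed in showing $S_2$ symmetric you cannot conclude $S_2=\mathbb{N}$, and the route to $T=\langle 2S,b\mathbb{N}\rangle$ collapses. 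What actually pins down $S_2=\mathbb{N}$ and $a=2$ simultaneously in the paper is a single explicit witness combined with the gluing hypothesis that $a$ is \emph{not} a minimal generator of $S_2$ (your plan only invokes the non-minimality of $b$ in $S_1$): since $S_1$, say, is not symmetric, $\F(S_1)=k_1+k_2$ with $k_i\in K(S_1)$, and then $\F(T)-b(\F(S_2)+a)=ak_1+ak_2\in 2K(T)\setminus K(T)$ because $\F(S_2)+a\in S_2$; the GAS property forces $b(\F(S_2)+a)$ to be a minimal generator of $T$, hence $\F(S_2)+a$ a minimal generator of $S_2$, and writing $a=s+s'$ with $s,s'\in M(S_2)$ gives $\F(S_2)+s\in S_2$ and $s'\in M(S_2)$, so minimality forces $\F(S_2)+s=\F(S_2)+s'=0$, i.e.\ $\F(S_2)=-1$ and $a=2$. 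A second witness of the same shape ($\F(S_1)-s=k_1+k_2$ with $s\in M(S_1)$, giving $\F(T)-(2s+b)\in 2K(T)\setminus K(T)$ with $2s+b$ not a minimal generator) then shows $S_1$ is almost symmetric. Note that only the ``minimal generator'' half of the GAS condition is used; the difference condition $x_i-x_j\notin\PF(T)$, on which your plan to exclude ``both $S_i$ non-symmetric'' relies, is never needed.

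A second, smaller problem: the auxiliary lemma you propose, $2K(T)\setminus K(T)=\{a\kappa_1+b\kappa_2:\kappa_i\in 2K(S_i),\ (\kappa_1,\kappa_2)\notin K(S_1)\times K(S_2)\}$, is false as stated. Already for $T=\langle 2S,b\mathbb{N}\rangle$ with $S$ not symmetric, take $\kappa_1=\F(S)\in 2K(S)\setminus K(S)$ and $\kappa_2=2$: the element $2\F(S)+2b$ exceeds $\F(T)=2\F(S)+b$, so it lies in $K(T)$ (equivalently, it admits the alternative representation $2(\F(S)+b)+b\cdot 0$ with $\F(S)+b\in K(S)$). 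Membership in $K(T)$ must account for all representations, not just the one you start from. For (3)$\Rightarrow$(2) no such global description is needed: writing $z\in\langle K(T)\rangle\setminus K(T)$ as $2(k_1+\cdots+k_r)+b\lambda$ with $k_i\in K(S)$, the fact that $z\notin K(T)$ forces $k_1+\cdots+k_r\notin K(S)$, hence $k_1+\cdots+k_r=\F(S)$ because $\langle K(S)\rangle\setminus K(S)=\{\F(S)\}$ for $S$ almost symmetric and not symmetric, and also forces $z\le\F(T)$, leaving only $z=2\F(S)$ and $z=2\F(S)+b$, so $T$ is 2-AGL.
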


\begin{proof}
(2) $\Rightarrow$ (1) True by definition. \\
(1) $\Rightarrow$ (3) Let $T=\langle aS_1, bS_2 \rangle$. Since $T$ is not symmetric, we can assume that $S_1$ is not symmetric and, then, $\F(S_1)=k_1+k_2$ for some $k_1$, $k_2 \in K(S_1)$. This implies that 
\[
\F(T)-b(\F(S_2)+a)=a\F(S_1)+b\F(S_2)+ab-b\F(S_2)-ab=ak_1+ak_2 \in 2K(T) \setminus K(T)
\]
because $\F(S_2)+a \in S_2$. Therefore, since $T$ is GAS, $\F(S_2)+a$ is a minimal generator of $S_2$. By definition of gluing, $a$ is not a minimal generator of $S_2$, so write $a=s+s'$ with $s$, $s' \in M(S_2)$. Since $\F(S_2)+s+s'$ is a minimal generator of $S_2$, we get $\F(S_2)+s=\F(S_2)+s'=0$, i.e. $\F(S_2)=-1$ and $a=s+s'=2$. This proves that $T=\langle 2S_1, b \mathbb{N} \rangle$. Clearly, $b$ is odd by definition of gluing, so we only need to prove that $S_1$ is almost symmetric. Assume by contradiction that it is not almost symmetric and let $s \in M(S_1)$ such that $\F(S_1)-s=k_1+k_2 \in 2K(S_1)\setminus K(S_1)$ with $k_1$, $k_2 \in K(S_1)$. Then
\[
\F(T)-(2s+b)=2\F(S_1)-b+2b-2s-b=2k_1+2k_2 \in 2K(T) \setminus K(T)
\]
and $2s+b$ is not a minimal generator of $T$, contradiction.
\\
(3) $\Rightarrow$ (2) Since $S$ is not symmetric, $\langle K(S) \rangle \setminus K(S)= 2K(S) \setminus K(S)=\{\F(S)\}$. Consider an element $z \in \langle K(T) \rangle \setminus K(T)$, that is $z=2k_1+b\lambda_1 + \dots  + 2k_r + b\lambda_r = 2(k_1+\dots +k_r)+b(\lambda_1+ \dots +\lambda_r)$ for some $k_1, \dots, k_r \in K(S)$ and $\lambda_1, \dots, \lambda_r \in \mathbb{N}$. Since $z \notin K(T)$, then $k_1+ \dots +k_r \notin K(S)$ and so $k_1+\dots +k_r=\F(S)$. Therefore, $z=2\F(S)+b(\lambda_1+\dots + \lambda_r) \in 2K(T)\setminus K(T)$ and, since it is not in $K(T)$ and $\F(T)=2\F(S)+b$, it follows that either $z=2\F(S)$ or $z=2\F(S)+b$. Hence, $|\langle K(T) \rangle \setminus K(T)|=2$ and thus $T$ is 2-AGL.  
\end{proof}

\subsection{Numerical Duplication}

In the previous subsection we have shown that if a non-symmetric GAS semigroup is a gluing, then it can be written as $\langle 2S, b \mathbb{N}\rangle$. This kind of gluing can be seen as a particular case of another construction, the {\it numerical duplication}, introduced in \cite{DS}.

Given a numerical semigroup $S$, a relative ideal $I$ of $S$ and an odd integer $b \in S$, the numerical duplication of $S$ with respect to $I$ and $b$ is defined as $\du=2\cdot S \cup \{2 \cdot I +b\}$, where $2\cdot X=\{2x \mid x\in X\}$ for every set $X$. This is a numerical semigroup if and only if $I+I+b \subseteq S$. This is always true if $I$ is an ideal of $S$ and, since in the rest of the subsection $I$ will always be an ideal, we ignore this condition. In this case, if $S$ and $I$ are minimally generated by $\{s_1, \dots, s_\nu\}$ and $\{i_1, \dots, i_\mu\}$ respectively, then $\du=\langle 2s_1, \dots, 2s_\nu, 2i_1+b, \dots, 2i_\mu+b \rangle$ and these generators are minimal. It follows that $\langle 2S, b \mathbb{N} \rangle = S\! \Join^b \!S$.

\begin{remark} \label{PF duplication} \rm
The Frobenius number of $\du$ is equal to $2\F(I)+b$. Moreover, the odd pseudo-Frobenius numbers of $\du$ are $\{2\lambda+b \mid \lambda \in \PF(I)\}$, whereas the even elements in $\PF(\du)$ are exactly the doubles of the elements in $((M-M) \cap (I-I)) \setminus S$; see the proof of \cite[Proposition 3.5]{DS}. In particular, if $2f \in \PF(\du)$, then $f \in \PF(S)$.
\end{remark}

In this subsection we write $K$ in place of $K(S)$. We note that $S-\langle K \rangle \subseteq S$ and $\F(S-\langle K \rangle)=\F(S)$. 

\begin{lemma} \label{Lemma Numerical Duplication}
Let $S$ be a numerical semigroup, $b \in S$ be an odd integer, $I$ be an ideal of $S$ with $\F(I)=\F(S)$ and $T=\du$. The following hold:
\begin{enumerate}
\item If $k\in K$, then both $2k$ and $2k+b$ are in $K(T)$. In particular, if $\F(S)-x \in iK \setminus K$, then $\F(T)-2x \in iK(T)\setminus K(T)$;
\item Let $k \in K(T)$. If $k$ is odd, then $\frac{k-b}{2} \in K$, otherwise $\F(S)-\frac{k}{2} \notin I$;
\item If $I=S-\langle K \rangle$ and $k \in K(T)$ is even, then $\frac{k}{2} \in jK$ for some $j \geq 1$. 
\item Let $I=S-\langle K \rangle$. If $\F(T)-2i-b \in \langle K(T) \rangle \setminus K(T)$, then $\F(S)-i \in \langle K \rangle \setminus K$ for every $i \in I$. Moreover, $\F(S)-x \in \langle K \rangle \setminus K$ if and only if $\F(T)-2x \in \langle K(T) \rangle \setminus K(T)$.
\end{enumerate}
\end{lemma}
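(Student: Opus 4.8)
The plan is to verify the four items in order, repeatedly exploiting the explicit descriptions of $\F(T)$, $\PF(T)$ and $K(T)$ for the numerical duplication that were recalled in Remark \ref{PF duplication}, together with the basic dictionary $x\in K\iff\F(S)-x\notin S$ and its analogue for $K(T)$. Throughout I set $T=\du$ and recall $\F(T)=2\F(I)+b=2\F(S)+b$.

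For (1): if $k\in K$, then $\F(S)-k\notin S$, so $\F(T)-2k=2(\F(S)-k)+b$ is odd and not in $2\cdot S$; as an odd element of $\mathbb Z\setminus T$ this forces $\frac{(\F(T)-2k)-b}{2}=\F(S)-k\notin I$, hence (since $\F(I)=\F(S)$ and $I$ is an ideal, so $\F(S)-k\notin S\supseteq$ large elements of $I$) it is genuinely outside $T$; similarly $\F(T)-(2k+b)=2(\F(S)-k)\notin 2\cdot S$, so $\F(S)-k\notin S$ gives $2k+b\in K(T)$. For the "in particular" statement, write $\F(S)-x=k_1+\dots+k_i$ with $k_j\in K$ and $\F(S)-x\notin K$; then $\F(T)-2x=2k_1+(2k_2+b)+b+2k_3+\dots$ — more carefully, $2x=\F(S)$-part doubles, and I combine one even piece with the shift $b$ to land in $iK(T)$, while $\F(T)-2x=2(\F(S)-x)\notin 2\cdot S$ because $\F(S)-x\notin S$, so $\F(T)-2x\notin K(T)$. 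I should be a little careful with parity bookkeeping here, but it is routine. For (2): take $k\in K(T)$, i.e. $\F(T)-k\notin T$. If $k$ is odd, then $\F(T)-k=2\F(S)+b-k$ is even, and $\notin 2\cdot S$ means $\F(S)-\frac{k-b}{2}\notin S$, i.e. $\frac{k-b}{2}\in K$. If $k$ is even, $\F(T)-k=2(\F(S)-\frac k2)+b$ is odd, and $\notin 2\cdot I+b$ means exactly $\F(S)-\frac k2\notin I$.

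For (3): now $I=S-\langle K\rangle$, and $k\in K(T)$ is even. By (2), $\F(S)-\frac k2\notin I=S-\langle K\rangle$, so there is $w\in\langle K\rangle$ with $(\F(S)-\frac k2)+w\notin S$; write $w=k_1+\dots+k_m$ with $k_j\in K$ and take $m$ minimal, so $(\F(S)-\frac k2)+k_1+\dots+k_{m-1}\in S$ but adding $k_m$ leaves $S$. Then $\F(S)-\big(\frac k2+k_1+\dots+k_{m-1}\big)=\F(S)-\frac k2-(w-k_m)\notin$ ... — rearranging, $\frac k2+(\text{an element of }(m-1)K)$ has complement-from-$\F(S)$ outside $S$, which says $\frac k2\in jK$ for a suitable $j\ge 1$ (one gets $j=m$ after absorbing; the point is that $\frac k2$ lies in some power of $K$ because its "$\langle K\rangle$-defect" is detected by $K$). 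I expect this to need the identity $\langle K\rangle=\bigcup_{j\ge1}jK$ and a clean minimality argument. For (4): assume $I=S-\langle K\rangle$ and $\F(T)-2i-b\in\langle K(T)\rangle\setminus K(T)$ for $i\in I$; since $\langle K(T)\rangle=\bigcup_j jK(T)$, write $\F(T)-2i-b=\sum(2k_\ell\text{ or }2k_\ell+b)$, reduce mod $2$ and $b$ to peel off the shift, and conclude $2(\F(S)-i)\in$ a sum of even elements $2k_\ell$, hence $\F(S)-i\in\langle K\rangle$; combined with $\F(T)-2i-b\notin K(T)$, which by (2) gives $\F(S)-i\notin K$ (after checking the odd/even split), this yields $\F(S)-i\in\langle K\rangle\setminus K$. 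The "moreover" equivalence then follows by combining this with part (1), which already gives one direction ($\F(S)-x\in\langle K\rangle\setminus K\Rightarrow\F(T)-2x\in\langle K(T)\rangle\setminus K(T)$), and the reverse direction from the displayed argument with $i$ replaced by $x$ and $b$ absorbed appropriately.

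The main obstacle will be part (3), and the corresponding bookkeeping inside part (4): controlling how the generator shift $b$ interacts with sums of elements of $K(T)$ of mixed parity, and extracting from "$\frac k2\notin S-\langle K\rangle$" the sharper statement "$\frac k2\in jK$". Parts (1) and (2) are essentially unwinding definitions. I would organize (3) and (4) around the fact that $\langle K\rangle=\bigcup_{j\ge 1}jK$ with $jK\subseteq(j{+}1)K$, so that "belongs to some power of $K$" is a genuinely useful intermediate notion, and around a minimal-length representation argument to pin down the first index at which adding an element of $K$ exits $S$.
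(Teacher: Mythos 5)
Your overall strategy is the same as the paper's (parity analysis together with the dictionary $y\notin S\iff \F(S)-y\in K$ and its analogue for $T$), and your treatment of (2) and of the first half of (1) is correct. Two concrete problems remain, though. In the ``in particular'' part of (1) you justify $\F(T)-2x\notin K(T)$ by asserting $\F(T)-2x=2(\F(S)-x)\notin 2\cdot S$: the formula is wrong ($\F(T)-2x=2(\F(S)-x)+b$ is odd), and the criterion is the wrong one --- $\F(T)-2x\notin K(T)$ means $\F(T)-(\F(T)-2x)=2x\in T$, which holds because $\F(S)-x\notin K$ forces $x\in S$; whether $\F(T)-2x$ itself lies in $2\cdot S$ is beside the point.

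More seriously, part (3) --- which you yourself flag as the main obstacle --- is not actually proved, and the substitute you sketch has the inequality pointing the wrong way. No minimality argument is needed: since $\F(S)-\frac{k}{2}\notin S-\langle K\rangle$, pick $a\in iK$ with $\F(S)-\frac{k}{2}+a\notin S$; by the definition of $K$ this says precisely $\frac{k}{2}-a\in K$, hence $\frac{k}{2}=a+\bigl(\frac{k}{2}-a\bigr)\in (i+1)K$. Your sketch instead lands on ``$\frac{k}{2}+(\text{an element of }(m-1)K)$ has complement-from-$\F(S)$ outside $S$'', i.e.\ $\frac{k}{2}+a'\in K$, which has the sign reversed and does not yield $\frac{k}{2}\in jK$. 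This gap propagates into (4): you decompose elements of $\langle K(T)\rangle$ as sums of terms $2k_\ell$ or $2k_\ell+b$, but even elements of $K(T)$ need not be doubles of elements of $K$ --- that is exactly why (3) only asserts $\frac{k}{2}\in jK$ for some $j$, and Example \ref{Example Numerical Duplication}.2 shows $j>1$ really occurs. The correct bookkeeping halves a sum of mixed-parity elements of $K(T)$, invoking (2) for the odd summands, (3) for the even ones, and $b\in S\subseteq K$ for the leftover multiples of $b$; finally $\F(S)-i\notin K$ follows simply from $i\in I\subseteq S$, not from (2) as you suggest.
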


\begin{proof}
(1) If $k \in K$, then $2k+b\in K(T)$, since $\F(T)-(2k+b) = 2(\F(S)-k)\notin 2 \cdot S$. Moreover, $\F(T)-2k=2(\F(S)-k)+b$ and $\F(S)-k \notin I$ because it is not in $S$, so $2k \in K(T)$. Therefore, if $\F(S)-x=k_1+\dots + k_i \in iK \setminus K$ with $k_1, \dots, k_i \in K$, then $\F(T)-2x=2k_1+ \dots +2k_{i-1}+(2k_i+b) \in iK(T)$ and, clearly, it is not in $K(T)$, since $2x \in T$. \\
(2) Let $k$ be odd. Since $2(\F(S)-\frac{k-b}{2})=2\F(S)+b-k=\F(T)-k \notin T$, it follows that $\F(S)-\frac{k-b}{2}\notin S$, i.e. $\frac{k-b}{2} \in K$. If $k$ is even, then $2(\F(S)-\frac{k}{2})+b=\F(T)-k \notin T$ and, thus, $\F(S)-\frac{k}{2}\notin I$.\\
(3) Since $\F(S)-\frac{k}{2} \notin S-\langle K \rangle$ by (2), there exist $i\geq 1$ and $a \in iK$ such that $\F(S)-\frac{k}{2}+a \notin S$, that is $\frac{k}{2}-a \in K$. Hence, $\frac{k}{2}=a+ (\frac{k}{2}-a) \in (i+1)K$. \\
(4) If $\F(T)-2i-b=k_1+ \dots + k_j + \dots k_n \in \langle K(T) \rangle \setminus K(T)$ with $k_1, \dots, k_j \in K(T)$ even and $k_{j+1}, \dots, k_n \in K(T)$ odd, then $\F(S)-i=\frac{k_1}{2}+\dots + \frac{k_j}{2} + \frac{k_{j+1}-b}{2} + \dots + \frac{k_{n}-b}{2} + \frac{(n-j)}{2}b \in \langle K \rangle \setminus K$ by (2) and (3). Using (1) the other statement is analogous.
\end{proof}

\begin{example} \rm \label{Example Numerical Duplication}
{\bf 1.} In the previous lemma we cannot remove the hypothesis $\F(I)=\F(S)$. For instance, consider $S=\langle 3,10,11 \rangle$, $I=\langle 3,10 \rangle$ and $T=S \! \Join^3 \! I $. Then, $\F(I)=11\neq 8=\F(S)$ and we have $\F(S)-6 \in 2K \setminus K$, but $\F(T)-12 \notin \langle K(T) \rangle$. \\
{\bf 2.} In the third statement of the previous lemma, $j$ may be bigger than 1. For instance, consider $S=\langle 6,28,47,97\rangle$ and $T=S\! \Join^{47} \!(S-\langle K\rangle)=\langle 12,56,71,94,115,153,159,194,197,241 \rangle$. Then $88,126,170,182 \in K(T)$, while $44,63,91 \in 2K \setminus K$ and $85 \in 3K \setminus 2K$. 
\end{example}

\begin{corollary} \label{Numerical duplication 2-AGL}
Let $b \in S$ be odd and let $I=S-\langle K \rangle$. The following hold: 
\begin{enumerate}
\item If $S$ is not almost symmetric, then $S\! \Join^b \!M$ is not {\rm GAS};
\item $S$ is n-{\rm AGL} if and only if $\du$ is n-{\rm AGL}.
\end{enumerate}
\end{corollary}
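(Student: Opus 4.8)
The plan is to reduce everything to Lemma \ref{Lemma Numerical Duplication}, which gives a tight dictionary between $\langle K \rangle \setminus K$ and $\langle K(T) \rangle \setminus K(T)$ when $I = S - \langle K \rangle$, together with the description of $\PF(T)$ in Remark \ref{PF duplication}. Throughout write $K = K(S)$ and $T = S \! \Join^b \! I$.

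First I would handle (2), the $n$-AGL statement, since it is the cleanest. By Lemma \ref{Lemma Numerical Duplication}(4), the map $x \mapsto 2x$ is a bijection between $\{x \mid \F(S) - x \in \langle K \rangle \setminus K\}$ and $\{2x \mid \F(T) - 2x \in \langle K(T) \rangle \setminus K(T)\}$; so the even elements of $\langle K(T) \rangle \setminus K(T)$ are in bijection with the elements of $\langle K \rangle \setminus K$. It then remains to check that there are no odd elements in $\langle K(T) \rangle \setminus K(T)$: an odd element of $\langle K(T) \rangle$ is a sum $k_1 + \dots + k_n$ of generators of $\langle K(T) \rangle$ with an odd number of odd summands (using $b$ odd and the parity bookkeeping from Lemma \ref{Lemma Numerical Duplication}(2),(3)); pushing it down via (2) and (3) shows its halved (minus-$b$-adjusted) preimage lies in $\langle K \rangle$, so by (1) the original element lies in $K(T)$ after all, unless its preimage is a gap, in which case one traces through that it must already be of the form $\F(T) - 2x$. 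Hence $|\langle K(T) \rangle \setminus K(T)| = |\langle K \rangle \setminus K|$, giving (2).

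For (1), suppose $S$ is not almost symmetric, so there is $s \in M$ with $\F(S) - s = k_1 + k_2 \in 2K \setminus K$. Here $T = S \! \Join^b \! M$, and the relevant contrast is that $M \subsetneq I = S - \langle K \rangle$ need not hold — in fact for $M$ one has $M + M + b \subseteq S$ automatically, so $T$ is a numerical semigroup, but the conclusion of Lemma \ref{Lemma Numerical Duplication} fails because $\F(M) = \F(S)$ is fine but $M \ne S - \langle K \rangle$ in general; so I would argue directly. Using Lemma \ref{Lemma Numerical Duplication}(1) (which only needs $k \in K$, not the identity of $I$) we get $\F(T) - 2s = 2k_1 + (2k_2 + b) \in 2K(T) \setminus K(T)$. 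If $T$ were GAS, $2s$ would have to be a minimal generator of $T$; but $T$'s minimal generators coming from $S$ are $2s_1, \dots, 2s_\nu$, so this forces $s$ to be a minimal generator of $S$, and moreover $2s$ must not be expressible using the generators $2i_j + b$, which is automatic by parity. So we only need a second element $\F(T) - y \in 2K(T) \setminus K(T)$ with $y$ not a minimal generator, or else a pair of minimal generators whose difference is a pseudo-Frobenius number. I would produce this by exploiting that $\PF(T)$ contains $2s + b$ for each $s \in \PF(M) = \PF(S) \cup \{0\}$ (since $M - M = S - M$), together with the even pseudo-Frobenius numbers $2f$ with $f \in (M-M) \cap (M-M) \setminus S = \PF(S)$: take $k_1$ above, note $\F(S) - k_1 \in \PF(S)$ as in the proof of Proposition \ref{PF GAS}, and then $2k_1$ and $2k_2$ (or $2k_1 + b$ and $2k_2 + b$) are minimal generators of $T$ whose difference $2(k_1 - k_2)$ lies in $M-M \subseteq (M\!\Join^b\!M)-(M\!\Join^b\!M)$ — contradicting the GAS condition $x_i - x_j \notin \PF(T)$, once one checks $2(k_1-k_2)$ or the relevant difference is actually a pseudo-Frobenius number of $T$ via Remark \ref{PF duplication}.

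The main obstacle I anticipate is exactly this last parity-and-pseudo-Frobenius bookkeeping in part (1): one must carefully distinguish the odd and even generators of $T$ and verify that the witness to non-GAS-ness (either a non-minimal-generator $y$ with $\F(T) - y \in 2K(T) \setminus K(T)$, or two minimal generators differing by an element of $\PF(T)$) genuinely exists. A clean way around it may be to invoke Theorem \ref{T. Almost Canonical ideal of M-M}: $S \! \Join^b \! M$ is GAS iff $M(T) - e(T)$ is an almost canonical ideal of $M(T) - M(T)$, and one computes $e(T) = 2e$, $M(T) - M(T)$, etc., in terms of $S$; since $S$ not almost symmetric gives, by Corollary \ref{canonical ideal}, that $M - e$ is not canonical in $M-M$, one transfers the failure upward. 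I would try both routes and keep whichever is shorter; the direct route is more elementary and likely preferable for the paper's style.
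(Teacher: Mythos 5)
Your plan for part (1) would not go through as written. The GAS condition only constrains those elements $y$ with $\F(T)-y \in 2K(T)\setminus K(T)$ (they must be minimal generators with pairwise differences outside $\PF(T)$); it says nothing about differences of arbitrary minimal generators, and in any case $2k_1$, $2k_2$ (or $2k_1+b$, $2k_2+b$) are in general not generators of $T=S\!\Join^b\!M$ at all, since $k_1,k_2\in K$ need not even lie in $S$. Likewise, the step ``$\F(S)-k_1\in\PF(S)$ as in the proof of Proposition~\ref{PF GAS}'' uses the hypothesis of that proposition (every element of $2K\setminus K$ is $\F(S)$ minus a minimal generator), which is not available for an arbitrary non--almost symmetric $S$. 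You actually have all the needed ingredients but do not combine them: starting from $\F(S)-x=k_1+k_2\in 2K\setminus K$ with $x\neq 0$ (so $x\in M$), Lemma~\ref{Lemma Numerical Duplication}(1) puts \emph{both} $\F(T)-2x=2k_1+(2k_2+b)$ and $\F(T)-(2x+b)=2k_1+2k_2$ in $2K(T)\setminus K(T)$ (the latter is not in $K(T)$ because $2x+b\in T$). The two corresponding elements $2x$ and $2x+b$ differ by $b=2\cdot 0+b\in\PF(T)$, since $0\in\PF(M)$ (Remark~\ref{PF duplication}); this already violates the GAS condition, with no need to decide whether $2x$ is a minimal generator or to hunt for a second witness.

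In part (2) your parity bookkeeping is reversed ($\F(T)=2\F(S)+b$ is odd, so the elements $\F(T)-2x$ are the odd ones), which is harmless, but the argument you sketch for the remaining parity class being empty is not correct: for an element $\F(T)-(2i+b)\in\langle K(T)\rangle\setminus K(T)$ with $i\in I$, pushing down via Lemma~\ref{Lemma Numerical Duplication}(2),(3) (i.e.\ the first statement of part (4)) gives $\F(S)-i\in\langle K\rangle\setminus K$, and from there part (1) of the lemma does \emph{not} put the original element back into $K(T)$ — it only applies to elements of $K$, not of $\langle K\rangle$. The missing idea is precisely where the hypothesis $I=S-\langle K\rangle$ enters: since $i\in S-\langle K\rangle$ and $\F(S)-i\in\langle K\rangle$, one gets $\F(S)=i+(\F(S)-i)\in S$, a contradiction, so this parity class is empty and $|\langle K(T)\rangle\setminus K(T)|=|\langle K\rangle\setminus K|$. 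Without that step the equality of cardinalities, hence the $n$-AGL equivalence, does not follow.
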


\begin{proof}
(1) Let $T=S\! \Join^b \!M$ and let $x \neq 0$ be such that $\F(S)-x \in 2K \setminus K$. By Lemma \ref{Lemma Numerical Duplication} (1), $\F(T)-2x$ and $\F(T)-(2x+b)$ are in $2K(T) \setminus K(T)$. Even though  $2x+b$ and $2x$ are minimal generators, their difference $b$ is a pseudo-Frobenius number of $T$ by Remark \ref{PF duplication}, because $0 \in \PF(M)$, hence $T$ is not GAS.  \\
(2) Let $T=\du$. By Lemma \ref{Lemma Numerical Duplication} (4) we have that $\F(S)-x \in \langle K\rangle \setminus K$ if and only if $\F(T)-2x \in \langle K(T) \rangle \setminus K(T)$. Moreover, 
if $\F(T)-(2i+b) \in  \langle K(T) \rangle \setminus K(T)$, Lemma \ref{Lemma Numerical Duplication} (4) implies that $\F(S)-i \in \langle K \rangle$ and, since $i \in (S-\langle K \rangle)$, it follows that $\F(S) \in S$, that is a contradiction. Hence, $S$ is $n$-AGL if and only if $T$ is $n$-AGL.
\end{proof}

\begin{remark} \rm
If $S$ is almost symmetric with type $t$, then $M=K-(M-M)$ and, consequently, $S\! \Join^b \!M$ is almost symmetric with type $2t+1$ by \cite[Theorem 4.3 and Proposition 4.8]{DS}.
\end{remark}

If $R$ is a one-dimensional Cohen-Macaulay local ring with a canonical module $\omega$ such that $R \subseteq \omega \subseteq \overline{R}$, in \cite[Theorem 4.2]{CGKM} it is proved that the idealization $R \ltimes (R:R[\omega])$ is 2-AGL if and only if $R$ is 2-AGL. The numerical duplication may be considered the analogous of the idealization in the numerical semigroup case, since they are both members of a family of rings that share many properties (see \cite{BDS}); therefore, Corollary \ref{Numerical duplication 2-AGL} (2) should not be surprising. In the following proposition we generalize this result for the GAS property.

\begin{theorem} \label{Numerical duplication S-<K>}
Let $S$ be a numerical semigroup, let $b \in S$ be an odd integer and let $I=S-\langle K \rangle$. The semigroup $T=\du$ is {\rm GAS} if and only if $S$ is {\rm GAS}.
\end{theorem}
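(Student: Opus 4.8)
The plan is to transfer the GAS property across the numerical duplication by translating statements about $\langle K(T)\rangle\setminus K(T)$ into statements about $\langle K\rangle\setminus K$ using Lemma \ref{Lemma Numerical Duplication}, and then matching up the "minimal generator" and "difference not a pseudo-Frobenius number" conditions on both sides. First I would dispose of the symmetric case: $T$ is symmetric if and only if $S$ is symmetric (since $t(T)=t(S)$ for this duplication, as $S-\langle K\rangle$ is canonical-like; more directly $K=S-\langle K\rangle$ forces $\langle K\rangle=K$, and Lemma \ref{Lemma Numerical Duplication}(4) then gives $\langle K(T)\rangle=K(T)$), so both sides of the equivalence hold, and we may assume neither is symmetric.

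For the forward direction, assume $T$ is GAS and not symmetric, so by Theorem \ref{Livelli più alti} we have $\langle K(T)\rangle\setminus K(T)=\{\F(T)-y_1,\dots,\F(T)-y_s,\F(T)\}$ with the $y_\ell$ minimal generators of $T$ and pairwise differences not in $\PF(T)$. By Lemma \ref{Lemma Numerical Duplication}(4), an element $\F(S)-x\in\langle K\rangle\setminus K$ corresponds to $\F(T)-2x\in\langle K(T)\rangle\setminus K(T)$; hence $2x=y_\ell$ for some $\ell$, so $x$ is a minimal generator of $S$ (a decomposition $x=s_1+s_2$ in $M$ would give $2x=2s_1+2s_2$ with $2s_1,2s_2\in M(T)$). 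For the difference condition, suppose $x_i-x_j\in\PF(S)$ with both $\F(S)-x_i,\F(S)-x_j\in\langle K\rangle\setminus K$; then $2x_i,2x_j$ are minimal generators of $T$ and $2x_i-2x_j=2(x_i-x_j)$ is an even element which I claim lies in $\PF(T)$. Here I would use Remark \ref{PF duplication}: the even pseudo-Frobenius numbers of $T$ are the doubles of $((M-M)\cap(I-I))\setminus S$, and one checks $x_i-x_j\in\PF(S)\subseteq M-M$ and also $x_i-x_j\in I-I$ (since $I=S-\langle K\rangle$ is an ideal containing $S$ and $\PF(S)$ consists of "small" gaps — this needs a short verification that $(x_i-x_j)+I\subseteq I$, equivalently $(x_i-x_j)+(S-\langle K\rangle)\subseteq S-\langle K\rangle$), and $x_i-x_j\notin S$. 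This contradicts $T$ being GAS, so $S$ is GAS.

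For the converse, assume $S$ is GAS and not symmetric, with $2K\setminus K=\{\F(S)-x_1,\dots,\F(S)-x_r,\F(S)\}$ and, by Theorem \ref{Livelli più alti}, $\langle K\rangle\setminus K$ of the same shape with minimal generators and differences avoiding $\PF(S)$. I must describe $\langle K(T)\rangle\setminus K(T)$. By Lemma \ref{Lemma Numerical Duplication}(4), the odd elements of $\langle K(T)\rangle\setminus K(T)$ are exactly $\F(T)-(2x+b)$ with $\F(S)-x\in\langle K\rangle\setminus K$, equivalently $x\in\{x_\ell\}$, plus $\F(T)=2\F(S)+b$ itself; and $2x_\ell+b$, $b$ are minimal generators of $T$. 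The even elements: if $k\in K(T)$ is even then $k/2\in jK$ for some $j\ge1$ by Lemma \ref{Lemma Numerical Duplication}(3), so a sum of even elements of $K(T)$ halves to an element of $\langle K\rangle$; combining even and odd summands and using (2),(3) as in the proof of part (4), any $z\in\langle K(T)\rangle\setminus K(T)$ is $\F(T)-2x$ or $\F(T)-(2x+b)$ for suitable $x$ with $\F(S)-x\in\langle K\rangle\setminus K$, i.e. $\langle K(T)\rangle\setminus K(T)=\{\F(T)-2x_1,\dots,\F(T)-2x_r,\F(T)-(2x_1+b),\dots,\F(T)-(2x_r+b),\F(T)\}$ — here I would reuse the bookkeeping from Lemma \ref{Lemma Numerical Duplication}, being careful that $\F(T)-2x_\ell\in\langle K(T)\rangle\setminus K(T)$ and $\F(T)-(2x_\ell+b)\in\langle K(T)\rangle\setminus K(T)$ both genuinely occur. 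All the exhibited elements are $\F(T)$ minus a minimal generator of $T$ ($2x_\ell$ and $2x_\ell+b$ are minimal generators), so it remains to verify the difference condition: among $\{2x_i\}\cup\{2x_j+b\}$, the relevant differences are $2(x_i-x_j)$, $2x_i-b-2x_j-b=2(x_i-x_j)$ again, $2x_i-(2x_j+b)=2(x_i-x_j)-b$, and $(2x_i+b)-2x_j=2(x_i-x_j)+b$. The odd differences $\pm(2(x_i-x_j))-b$-type are pseudo-Frobenius of $T$ only if $x_i-x_j\in\PF(S)$ by Remark \ref{PF duplication}, which is excluded; the even difference $2(x_i-x_j)$ is in $\PF(T)$ only if $x_i-x_j\in\PF(S)$ (again Remark \ref{PF duplication}, plus the $I-I$ containment is automatic), again excluded. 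Hence $T$ is GAS.

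The main obstacle I expect is the structural claim that $\langle K(T)\rangle\setminus K(T)$ is exactly the asserted list of $2r+1$ elements; this requires pushing Lemma \ref{Lemma Numerical Duplication}(1)--(4) a little further to show that no "extra" even elements $\F(T)-2x$ with $\F(S)-x\in K$ but $x\neq 0$ sneak into $\langle K(T)\rangle\setminus K(T)$, and conversely that every $\F(S)-x_\ell$ in $\langle K\rangle\setminus K$ produces both an even and an odd element of $\langle K(T)\rangle\setminus K(T)$. A secondary nuisance is the repeated appeal to Remark \ref{PF duplication}: I will need the clean statement "$2(x_i-x_j)\in\PF(T)\iff x_i-x_j\in\PF(S)$ for $x_i,x_j$ minimal generators of $S$," whose $\Leftarrow$ direction uses that $\PF(S)$-elements automatically lie in $(I-I)$ when $I=S-\langle K\rangle$, and whose $\Rightarrow$ direction is the already-recorded implication "if $2f\in\PF(\du)$ then $f\in\PF(S)$." Everything else is routine translation along the halving map.
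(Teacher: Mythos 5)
Your forward direction is essentially the paper's argument and is fine: transfer $\F(S)-x\in 2K\setminus K$ to $\F(T)-2x\in 2K(T)\setminus K(T)$ via Lemma \ref{Lemma Numerical Duplication}, conclude minimality of $x$ from minimality of $2x$, and get the contradiction $2(x_i-x_j)\in\PF(T)$ from Remark \ref{PF duplication} after checking $x_i-x_j\in (M-M)\cap(I-I)$. The one step you flag there is indeed the only thing to check, and it is a one-liner: for $f\in\PF(S)$, $i\in I$ and $a\in\langle K\rangle$ one has $f+i+a\in f+M\subseteq S$, so $f+I\subseteq I$ (your heuristic ``$I$ is an ideal containing $S$'' is backwards, since $I=S-\langle K\rangle\subseteq S$, but the claim itself is correct).

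The converse, however, rests on a structural claim that is false. You assert that $\langle K(T)\rangle\setminus K(T)$ consists of the $2r+1$ elements $\F(T)-2x_\ell$, $\F(T)-(2x_\ell+b)$ and $\F(T)$, with $2x_\ell+b$ and $b$ minimal generators of $T$. In fact, when $S$ is not symmetric, $x_\ell\notin I=S-\langle K\rangle$ (otherwise $\F(S)=x_\ell+(\F(S)-x_\ell)\in S$) and likewise $0\notin I$; hence $2x_\ell+b\notin T$ and $b\notin T$, so they are not minimal generators, and $\F(T)-(2x_\ell+b)=2(\F(S)-x_\ell)$ lies in $K(T)$, not in $\langle K(T)\rangle\setminus K(T)$. (You also have the parities reversed: $\F(T)-2x$ is odd and $\F(T)-(2i+b)$ is even.) The point you would actually need --- and which the paper proves --- is the opposite of part of your program: $\langle K(T)\rangle\setminus K(T)$ contains \emph{no} even elements at all, because an element $\F(T)-(2i+b)$ with $i\in I$ would give, via Lemma \ref{Lemma Numerical Duplication}(2)--(4), $\F(S)-i\in\langle K\rangle$ and hence $\F(S)=i+(\F(S)-i)\in S$, a contradiction; so $\langle K(T)\rangle\setminus K(T)=\{\F(T)-2x:\ \F(S)-x\in\langle K\rangle\setminus K\}$ has only $r+1$ elements. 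Thus your announced ``main obstacle'' (showing that each $x_\ell$ produces both an even and an odd element) cannot be carried out. Once the structure is corrected, the rest of your converse --- $x$ minimal by Theorem \ref{Livelli più alti} applied to $S$, and $2(x-y)\in\PF(T)\Rightarrow x-y\in\PF(S)$ by Remark \ref{PF duplication}, contradicting Theorem \ref{Livelli più alti} --- does go through and is precisely the paper's proof.
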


\begin{proof}
Assume that $T$ is GAS and let $\F(S)-x \in 2K \setminus K$. By Lemma \ref{Lemma Numerical Duplication}, $\F(T)-2x\in 2K(T) \setminus K(T)$, so $2x$ is a minimal generator of $T$ and, thus, $x$ is a minimal generator of $S$. Now let $\F(S)-x$, $\F(S)-y \in 2K \setminus K$ and assume by contradiction that $x-y \in \PF(S)$. 
In particular, $S$ is not symmetric and, then, $I=M-\langle K \rangle$. Moreover, $\F(T)-2x$ and $\F(T)-2y$ are in $2K(T) \setminus K(T)$. We also notice that $x-y \in I-I$, indeed, if $i \in I$ and $a \in \langle K \rangle$, it follows that $(x-y)+i+a \in (x-y)+M \subseteq S$. Therefore, Remark \ref{PF duplication} implies that $2(x-y) \in \PF(T)$; contradiction.

Conversely, assume that $S$ is GAS and let $\F(T)-z=k_1+k_2 \in  2K(T) \setminus K(T)$ with $k_1$, $k_2 \in K(T)$. If $z=2i+b$ is odd and both $k_1$ and $k_2$ are odd, then $i\in I$ and $\F(S)-i=(k_1-b)/2+(k_2-b)/2+b \in 2K$ by Lemma \ref{Lemma Numerical Duplication}.(2); on the other hand, if $k_1$ and $k_2$ are both even, $\F(S)-i=k_1/2+k_2/2 \in \langle K \rangle$ by Lemma \ref{Lemma Numerical Duplication}.3. Since $i \in (S-\langle K \rangle)$, in both cases we get $\F(S) \in S$, that is a contradiction. Hence, $z=2x$ is even. If $k_1$ is even and $k_2$ is odd, Lemma \ref{Lemma Numerical Duplication} implies that $\F(S)-x=k_1/2 + (k_2-b)/2 \in (j+1)K \setminus K$ for some $j\geq 1$ and, therefore, by Theorem \ref{Livelli più alti} it follows that $x$ is a minimal generator of $S$, i.e. $z=2x$ is a minimal generator of $T$. Moreover, let $\F(T)-2x$, $\F(T)-2y \in 2K(T)\setminus K(T)$ and assume by contradiction that $2x-2y \in \PF(T)$. Remark \ref{PF duplication} implies that $x-y \in \PF(S) \subseteq K \cup \{\F(S)\}$. Thus, if $\F(T)-2x=k_1+k_2$ with $k_1$, $k_2 \in K(T)$ and $k_1$ even, then $\F(S)-x=k_1/2+(k_2-b)/2 \in \langle K(S) \rangle \setminus K(S)$ by Lemma \ref{Lemma Numerical Duplication} and, so, $\F(S)-y=k_1/2+(k_2-b)/2+(x-y) \in \langle K(S) \rangle \setminus K(S)$. Hence, Theorem \ref{Livelli più alti} yields a contradiction, because $x-y \in \PF(S)$.
\end{proof}

\begin{example} \rm
{\bf 1.} Consider the semigroup $S$ in Example \ref{Example Numerical Duplication}.2. It is GAS and, then, the previous theorem implies that also $T=S\! \Join^{47} \!(S-\langle K\rangle)$ is GAS. However, we notice that $2K\setminus K=\{44,63,91\}$, $3K \setminus 2K=\{85\}$ and $4K=3K$, while $2K(T) \setminus K(T)=\{135,173,217,229\}$ and $2K(T)=3K(T)$. \\
{\bf 2.} Despite Theorem \ref{Numerical duplication S-<K>}, if $\du$ is GAS for an ideal $I$ different form $S-\langle K \rangle$, it is not true that also $S$ is GAS. For instance, the semigroup $S$ in  Example \ref{Example Numerical Duplication}.1 is not GAS, but $S\! \Join^3 \! I$ is. 
\end{example}

\subsection{Dilatations of numerical semigroups}

We complete this section studying the transfer of the GAS property in a construction recently introduced in \cite{BS}: given $a \in M-2M$, the numerical semigroup $S+a=\{0\} \cup \{m+a \mid m \in M\}$ is called dilatation of $S$ with respect to $a$. 

\begin{proposition} \label{dilatation}
Let $a \in M-2M$. The semigroup $S+a$ is {\rm GAS} if and only if $S$ is {\rm GAS}.
\end{proposition}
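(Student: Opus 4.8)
The plan is to understand explicitly the relationship between the canonical ideal, the Apéry-type data, and the set $2K\setminus K$ for $S$ versus its dilatation $S' = S+a$, and then to transfer the defining property of GAS back and forth. First I would record the basic dictionary between $S$ and $S'=S+a$: if $M$ is the maximal ideal of $S$ then $M' = \{m+a \mid m\in M\}$, the multiplicity of $S'$ is $e+a$, and $\F(S') = \F(S)+a$. The minimal generators of $S'$ are exactly $\{x+a \mid x \text{ a minimal generator of } S\}$ (this is essentially the content of the dilatation construction, and should be quoted or re-derived from \cite{BS}). One also needs the behaviour of gaps: $x\in\N\setminus S'$ iff $x<a$ or $x-a\in\N\setminus S$; in particular $\L(S')$ and $\PF(S')$ have clean descriptions in terms of $\L(S)$ and $\PF(S)$ shifted by $a$, and $\F(S')-f' \in\PF(S')$ iff the corresponding unshifted statement holds for $S$. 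I expect $\PF(S') = \{f+a \mid f\in\PF(S)\}\cup(\text{possibly the single small gap } a-1 \text{ when } a>1)$, and I would pin this down precisely, since the difference condition $x_i-x_j\notin\PF(S)$ is the delicate part of the GAS definition.

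Next I would compute the standard canonical ideal $K(S')$ in terms of $K = K(S)$. Using $K(S')=\{x\in\N \mid \F(S')-x\notin S'\}$ together with the gap dictionary above, one gets a description of $K(S')$ as (roughly) $\{x \mid x-a\in K(S)\}$ together with the initial block $\{0,1,\dots\}$ up to some bound coming from $a$; more useful is to identify $2K(S')\setminus K(S')$. The key claim to establish is: $\F(S')-x' \in 2K(S')\setminus K(S')$ for $x'=x+a$ a nonzero element if and only if $\F(S)-x\in 2K\setminus K$. Granting this, an element of $2K(S')\setminus K(S')$ other than $\F(S')$ has the form $\F(S')-(x+a)$ with $\F(S)-x\in 2K\setminus K$ and $x+a$ a minimal generator of $S'$ exactly when $x$ is a minimal generator of $S$; and the difference of two such elements is $(x_i+a)-(x_j+a)=x_i-x_j$, which lies in $\PF(S')$ iff $x_i-x_j\in\PF(S)$ (here I must be careful to check that $x_i-x_j$ cannot equal the extra small pseudo-Frobenius number $a-1$, which holds because $x_i,x_j\geq e > a-1$ when $a\le e$, and $a\in M-2M$ forces $a\le e-1$ unless $S$ has a special shape — this needs a line of care). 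Likewise the symmetric case $2K=K$ transfers to $2K(S')=K(S')$. Combining these gives that $S'$ is GAS iff $S$ is GAS.

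The cleanest route to the key claim about $2K(S')\setminus K(S')$ may be to avoid computing $K(S')$ head-on and instead invoke the main Theorem \ref{T. Almost Canonical ideal of M-M}: $S$ is GAS iff $M-e$ is an almost canonical ideal of $M-M$. So I would compute, for $S'=S+a$, what the ring $M'-M'$ and the ideal $M'-(e+a)$ look like in terms of $M-M$ and $M-e$. One should find that $M'-M' = (M-M)$ up to a natural translation (indeed $M'-M' = \{z \mid z+M'\subseteq M'\}$, and $z+M'\subseteq M'$ iff $z+m+a\in M+a\cup\{0\}$ for all $m\in M$, i.e. $z\in M-M$), so $M'-M' = M-M$ exactly, and similarly $M'-(e+a)$ relates to $M-e$ by a shift. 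Then almost-canonicity of $M'-(e+a)$ over $M'-M'$ would reduce, via Proposition \ref{almost canonical ideal}(3), to almost-canonicity of $M-e$ over $M-M$, and Theorem \ref{T. Almost Canonical ideal of M-M} finishes both directions at once.

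The main obstacle I anticipate is the bookkeeping around the ``small'' pseudo-Frobenius number of $S'$ created by the gap $a-1$ (when $a\ge 2$): I need to make sure it does not interfere with the condition $x_i-x_j\notin\PF(S')$, i.e. that no difference of two of the relevant minimal generators of $S'$ equals $a-1$. Since those generators are of the form $x+a$ with $x$ a minimal generator of $S$, their differences are differences of minimal generators of $S$, each of absolute value either $0$ or at least... well, at least the gap between two minimal generators of $S$, which can in principle be as small as $1$; but such a difference being a pseudo-Frobenius number of $S$ is exactly the obstruction already handled by the GAS hypothesis on $S$, and $a-1 < a \le \F(S')$, $a-1\notin\{f+a : f\in\PF(S)\}$ since $f\ge 0$. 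So the only genuinely new thing to verify is $x_i - x_j \ne a-1$ for the generators in play; I would handle this by noting $a\le e$ (from $a\in M-2M$, since $2e\in 2M$ forces... actually $a\in M-2M$ only gives $a+2m\in S$ for all $m\in M$, which for $m=e$ gives $a+2e\in S$; more carefully $a \le $ something), and $x_i,x_j$ are generators $\ge e$, so their difference, if nonzero and positive, is $\ge$ the second-smallest generator minus $e$ — not obviously $>a-1$. If that fails I would instead argue directly that $x_i-x_j = a-1$ would force, after unwinding, a contradiction with $\F(S)-x_i\in 2K\setminus K$. This is the step where I would spend the most care; the rest is the translation dictionary, which is routine once set up.
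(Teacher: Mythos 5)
Your second (``cleanest'') route is correct and is genuinely different from the paper's proof, and in fact it is shorter. The paper quotes \cite[Lemma 3.1 and Lemma 3.4]{BS} to get an explicit matching between $2K(S)\setminus K(S)$ and $2K(S+a)\setminus K(S+a)$ (the elements $\F(S)-x_i$ correspond to $\F(S+a)-(x_i+a)$), and then checks by hand, in both directions, that the $x_i$'s are minimal generators and that no difference $x_i-x_j$ is a pseudo-Frobenius number. Your route bypasses the canonical ideal of $S+a$ entirely: writing $M'=M(S+a)$, one has $M'=M+a$, the multiplicity of $S+a$ is $e+a$, and hence $M'-M'=M-M$ and $M'-(e+a)=M-e$ \emph{as sets} (not just up to translation, so you do not even need Proposition \ref{almost canonical ideal}(3) or translation-invariance of almost canonicity); Theorem \ref{T. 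Almost Canonical ideal of M-M} applied to $S$ and to $S+a$ then gives both implications at once, with no circularity since that theorem is proved independently of dilatations. What your approach buys is brevity and conceptual clarity (GAS is an invariant of the pair $(M-M,\,M-e)$, which dilatation does not change); what the paper's approach buys is explicit control of $2K(S+a)\setminus K(S+a)$, which is also the content of the Remark following the proposition. One caution about your first route: your guessed dictionary for $\PF(S+a)$ is wrong --- since $S+a-M(S+a)=M-M$, one has $\PF(S+a)=(M-M)\setminus(S+a)=(S\cup\PF(S))\setminus(\{0\}\cup(M+a))$, which typically contains many elements of $M$ itself, not merely shifted pseudo-Frobenius numbers and $a-1$; also the ``key claim'' there is exactly the content of the \cite{BS} lemmas and would still need a proof. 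So route 1 as written has real gaps, but your preferred route 2 is sound and complete in outline.
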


\begin{proof}
We denote the semigroup $S+a$ by $T$. Recalling that $\F(T)=\F(S)+a$, by \cite[Lemma 3.1 and Lemma 3.4]{BS} follows that $2K(T)=2K(S)$ and 
\begin{equation*}
\begin{split}
&2K(S) \setminus K(S)=\{\F(S)-x_1, \dots, \F(S)-x_r, \F(S)\}, \\
&2K(T) \setminus K(T)=\{\F(T)-(x_1+a), \dots, \F(T)-(x_r+a), \F(T)\} 
\end{split}
\end{equation*}
for some $x_1, \dots, x_r \in M$. 

Assume that $S$ is a GAS semigroup. Then, $x_i$ is a minimal generator of $S$ and it is straightforward to see that $x_i+a$ is a minimal generator of $T$. Moreover, if $(x_i+a)-(x_j+a) \in \PF(T)$, then for every $m \in M$ we have $x_i-x_j+m+a \in T$, i.e. $x_i-x_j+m \in M$, that is a contradiction, since $S$ is GAS.

Now assume that $T$ is GAS. Suppose by contradiction that $x_i$ is not a minimal generator of $S$, that is $x_i=s_1+s_2$ for some $s_1$, $s_2 \in M$. We have $\F(S)-(s_1+s_2) \in 2K(S)\setminus K(S)$ and so $\F(S)-s_1 \in 2K(S)\setminus K(S)$, since $2K(S)$ is a relative ideal. Hence, $s_1=x_j$ for some $j$ and $(x_i+a)-(x_j+a)=s_2 \in S$. Since $x_i+a$ is a minimal generator, we have that $s_2 \notin T$. Moreover, for every $m+a \in M(T)$ we clearly have $s_2+m+a \in M(T)$, because $s_2 \in S$. This yields a contradiction because $(x_i+a)-(x_j+a)=s_2 \in \PF(T)$ and $T$ is GAS. Finally, if $x_i-x_j \in \PF(S)$, it is trivial to see that $x_i-x_j \in \PF(T)$.
\end{proof}

\begin{remark} \rm
Suppose $2K(S+a) \setminus K(S+a)=\{\F(S+a)-(x_1+a), \dots, \F(S+a)-(x_r+a), \F(S+a)\}$ with $x_1+a, \dots, x_r+a$ minimal generators of $S+a$, but $S+a$ is not GAS. Then $2K(S) \setminus K(S)=\{\F(S)-x_1, \dots, \F(S)-x_r, \F(S)\}$, but it is not necessarily true that $x_1, \dots, x_r$ are minimal generators of $S$. For instance, consider $S=\langle 7,9,11 \rangle$ and $S+7=\langle  14, 16, 18, 21, 23, 25, 27, 29, 38, 40 \rangle$. In this case $2K(S+7) \setminus K(S+7)=\{33-29,33-18,33\}$ and $2K(S) \setminus K(S)=\{26-22, 26-11, 26\}$.
\end{remark}

\end{document}